\def\bbe{\mathbb E}
\def\bbp{\mathbb P}
\def\bbr{\mathbb R}
\def\bbone{{\mathbbm 1}}
\theoremstyle{plain}
\newtheorem{thm}{Theorem}[section]
\newtheorem{cor}{Corollary}[section]
\newtheorem{prop}{Proposition}[section]
\newtheorem{lem}{Lemma}[section]
\theoremstyle{definition}
\newtheorem{rem}{Remark}[section]
\numberwithin{equation}{section}
\makeatletter  \@addtoreset{equation}{section} \makeatother
\begin{document}
\title{Covariance Representations, $L^p$-Poincar\'e Inequalities, Stein's Kernels and High Dimensional CLTs}

\author{Benjamin Arras\thanks{Universit\'e de Lille, Laboratoire Paul Painlev\'e, CNRS U.M.R. 8524, 
59655 Villeneuve d'Ascq, France; benjamin.arras@univ-lille.fr}\; and Christian Houdr\'e\thanks{Georgia Institute of Technology, 
School of Mathematics, Atlanta, GA 30332-0160, USA; houdre@math.gatech.edu. 
Research supported in part by the grant \# $524678$ from the Simons Foundation.
\newline\indent Keywords: Bismut-type formulas, covariance representations, high-dimensional CLTs, log-concave measure, stable laws, infinite divisibility, Poincar\'e inequality, Stein's method.
\newline\indent MSC 2010: 26D10; 35R11; 47D07; 60E07; 60F05. }}

\maketitle

\vspace{\fill}
\begin{abstract}
We explore connections between covariance representations, Bismut-type formulas and Stein's method.~First, using the theory of closed symmetric forms, we derive covariance representations for several well-known probability measures on $\bbr^d$,  $d \geq 1$.  When strong gradient bounds are available, these covariance representations immediately lead to $L^p$-$L^q$ covariance estimates, for all $p \in (1, +\infty)$ and $q = p/(p-1)$.~Then, we revisit the well-known $L^p$-Poincar\'e inequalities ($p \geq 2$) for the standard Gaussian probability measure on $\bbr^d$ based on a covariance representation.~Moreover, for the nondegenerate symmetric $\alpha$-stable case, $\alpha \in (1,2)$,  we obtain $L^p$-Poincar\'e and pseudo-Poincar\'e inequalities, for $p \in (1, \alpha)$, via a detailed analysis of the various Bismut-type formulas at our disposal.~Finally, using the construction of Stein's kernels by closed forms techniques,  we obtain quantitative high-dimensional CLTs in $1$-Wasserstein distance when the limiting Gaussian probability measure is anisotropic.~The dependence on the parameters is completely explicit and the rates of convergence are sharp.  
\end{abstract}
\vspace{\fill}

\section*{Introduction}

Covariance representations and Bismut-type formulas play a major role in modern probability theory.~The most striking (and simple) instances are without a doubt the ones regarding the standard Gaussian probability measure on $\bbr^d$.~These identities have many applications ranging from functional inequalities, concentration phenomena, regularization along semigroup, continuity of certain singular integral operators and Stein's method.~The main objective of the present manuscript is to illustrate this circle of ideas.~While some of the results presented here might be well-known to specialists, others seem to be new.~Let us further describe the main content of these notes.~In the first section, we revisit covariance identities based on closed form techniques and semigroup arguments. In particular, when strong gradient bounds are available, $L^p$-$L^q$ asymmetric covariance estimates ($p \in [1,+\infty)$ and $q = p/(p-1)$) are put forward.~In the second section, based on various representation formulas, we discuss $L^p$-Poincar\'e inequalities ($p \geq 2$) and pseudo-Poincar\'e inequality for the standard Gaussian measure and for the nondegenerate symmetric $\alpha$-stable probability measures on $\bbr^d$ with $\alpha \in (1,2)$.~Finally, in the third section, as an application of our methodology, we build Stein's kernels in order to obtain, in $1$-Wasserstein distance, rates of convergence for high-dimensional central limit theorems when the limiting probability measure is a centered Gaussian measure with nondegenerate covariance matrix.~The methodology is based on Stein's method for multivariate Gaussian probability measures and on closed forms techniques under a finite Poincar\'e-type constant assumption. 

\section{Notations and Preliminaries}
\noindent
Throughout, the Euclidean norm on $\bbr^d$ is denoted by $\|\cdot\|$ and the Euclidean inner product by $\langle ;\rangle$.    Then, $X \sim ID(b, \Sigma, \nu)$  indicates that the 
$d$-dimensional random vector $X$ is infinitely divisible with characteristic triplet $(b, \Sigma, \nu)$.  In other words, its characteristic function $\varphi_X$ is given, for all $\xi \in \bbr^d$, by 
$$
\varphi_X(\xi) = \exp\left(i\langle b; \xi\rangle -\frac{1}{2}\langle\Sigma \xi;\xi\rangle + \int_{\bbr^d} (e^{i\langle \xi;u\rangle} - 1 - i\langle \xi; u\rangle\bbone_{\|u\|\leq 1}) \nu(du)\right),
$$
where $b\in \bbr^d$, where $\Sigma$ is a symmetric positive semi-definite 
$d\times d$ matrix, and where $\nu$, the L\'evy measure, is a positive Borel measure on $\bbr^d$ such that $\nu(\{0\}) = 0$ and such that $\int_{\bbr^d} (1 \wedge \|u\|^2) \nu(du) <+\infty$.  
In particular, if $b = 0$, $\Sigma = I_d$, the $d\times d$ identity matrix, and $\nu = 0$, then 
$X$ is a standard Gaussian random vector with law $\gamma$ and 
its characteristic function is given, for all $\xi \in \bbr^d$, by
\begin{equation}\label{def:gauss}
\hat{\gamma}(\xi) := \int_{\bbr^d} e^{i \langle y ; \xi\rangle} \gamma(dy) =  \exp\left(- \frac{\|\xi\|^2}{2}\right).
\end{equation}
For $\alpha \in (0,2)$, let $\nu_\alpha$ be a L\'evy measure such that, for all $c>0$,
\begin{align}\label{eq:scale}
c^{-\alpha}T_c(\nu_\alpha)(du)=\nu_\alpha(du),
\end{align}
where $T_c(\nu_\alpha)(B):=\nu_{\alpha}(B/c)$, for all $B$ Borel set of $\bbr^d\setminus \{0\}$.  Recall that such a  L\'evy measure admits the polar decomposition
\begin{align}\label{eq:polar}
\nu_\alpha(du) = \bbone_{(0,+\infty)}(r) \bbone_{\mathbb{S}^{d-1}}(y)\dfrac{dr}{r^{\alpha+1}}\sigma(dy),
\end{align}
where $\sigma$ is a positive finite measure on the Euclidean unit sphere of $\bbr^d$ denoted by $\mathbb{S}^{d-1}$.~In the sequel, it is assumed that the measure $\sigma$ is symmetric and that $\nu_\alpha$ is nondegenerate in that 
\begin{align}\label{eq:non_deg}
\underset{y \in \mathbb{S}^{d-1}}{\inf} \int_{\mathbb{S}^{d-1}} |\langle y;x \rangle|^{\alpha} \lambda_1(dx) \ne 0,
\end{align}
where $\lambda_1$, the spectral measure, is a symmetric finite positive measure on $\mathbb{S}^{d-1}$ proportional to $\sigma$ (namely, $\lambda_1(dx)= - \cos (\alpha \pi /2) \Gamma(2-\alpha)/(\alpha (\alpha-1))\sigma(dx)$, $\alpha \in (1,2)$ and where $\Gamma$ is the Euler Gamma function).~Let $\mu_\alpha$ be the $\alpha$-stable probability measure on $\bbr^d$ defined through the corresponding characteristic function, for all $\xi \in \bbr^d$, by
\begin{align}\label{stable:characteristic}
\varphi_\alpha(\xi):= \int_{\bbr^d} e^{i \langle y ; \xi\rangle} \mu_\alpha(dy) = \left\{
    \begin{array}{ll}
        \exp\left(\int_{\bbr^d} (e^{i \langle u;\xi \rangle}-1-i\langle \xi;u\rangle) \nu_\alpha(du)\right),  & \alpha \in (1,2), \\
        \exp\left(\int_{\bbr^d}\left(e^{i \langle u;\xi \rangle} - 1 -i \langle \xi;u\rangle \bbone_{|u|\leq 1}\right) \nu_1(du) \right), & \alpha=1, \\
	\exp\left(\int_{\bbr^d} (e^{i \langle u;\xi \rangle}-1) \nu_\alpha(du)\right), & \alpha \in (0,1).
    \end{array}
\right.
\end{align}
For $\sigma$ symmetric, \cite[Theorem 14.13.]{S} provides a useful alternative representation for the characteristic function $\varphi_\alpha$ given, for all $\xi \in \bbr^d$, by
\begin{align}\label{eq:rep_spectral_measure}
\varphi_\alpha(\xi) = \exp\left(- \int_{\mathbb{S}^{d-1}} |\langle y;\xi \rangle|^\alpha \lambda_1(dy)\right).
\end{align}
Let $\lambda$ denote a uniform measure on the Euclidean unit sphere of $\bbr^d$.~For $\alpha \in (1,2)$, let $\nu_{\alpha}^{\operatorname{rot}}$ be the L\'evy measure on $\bbr^d$ with polar decomposition 
\begin{align}\label{eq:Levy_Rot}
\nu_{\alpha}^{\operatorname{rot}}(du) = c_{\alpha,d} \bbone_{(0,+\infty)}(r) \bbone_{\mathbb{S}^{d-1}}(y)\dfrac{dr}{r^{\alpha+1}}\lambda(dy),
\end{align}
and with,
\begin{align}\label{eq:renorm}
c_{\alpha,d} = \dfrac{-\alpha (\alpha-1) \Gamma\left(\frac{\alpha+d}{2}\right)}{4 \cos \left(\frac{\alpha\pi}{2}\right)\Gamma\left(\frac{\alpha+1}{2}\right) \pi^{\frac{d-1}{2}}\Gamma(2-\alpha)}.
\end{align}
Finally, denote by $\mu_{\alpha}^{\operatorname{rot}}$ the rotationally invariant $\alpha$-stable probability measure on $\bbr^d$ with L\'evy measure given by \eqref{eq:Levy_Rot} and with the choice of $\lambda$ ensuring that, for all $\xi \in \bbr^d$,
\begin{align}\label{eq:charac_rot}
\varphi^{\operatorname{rot}}_\alpha(\xi) = \hat{\mu}_{\alpha}^{\operatorname{rot}}\left(\xi\right) = \exp\left(-\frac{\|\xi\|^\alpha}{2}\right).
\end{align}
As well-known the probability measure $\mu_{\alpha}^{\operatorname{rot}}$ is absolutely continuous with respect to the $d$-dimensional Lebesgue measure and its Lebesgue density, denoted by $p_{\alpha}^{\operatorname{rot}}$, is infinitely differentiable and is such that, for all $x\in\bbr^d$, 
\begin{align*}
\frac{C_2}{\left(1+\|x\|\right)^{\alpha+d}} \leq p_{\alpha}^{\operatorname{rot}}(x) \leq \dfrac{C_1}{\left(1+\|x\|\right)^{\alpha+d}},
\end{align*}
for some constants $C_1,C_2>0$ depending only on $\alpha$ and on $d$. For $\alpha \in (1,2)$, let $\nu_{\alpha,1}$ be the L\'evy measure on $\bbr$ given by
\begin{align}\label{sym:1}
\nu_{\alpha,1}(du) = c_{\alpha}\frac{du}{|u|^{\alpha+1}},
\end{align}
with,
\begin{align}
c_{\alpha} = \left( \dfrac{-\alpha (\alpha -1)}{4 \Gamma(2-\alpha)\cos\left(\frac{\alpha \pi}{2}\right)} \right).
\end{align}
Next, let $\mu_{\alpha,1}$ be the $\alpha$-stable probability measure on $\bbr$ with L\'evy measure $\nu_{\alpha,1}$ and with corresponding 
characteristic function defined, for all $\xi \in \bbr$, by
\begin{align}
\hat{\mu}_{\alpha,1}(\xi) = \exp \left(\int_{\bbr} \left(e^{i \langle u;\xi\rangle}-1-i \langle u;\xi \rangle\right) \nu_{\alpha,1}(du)\right) = \exp\left(-\frac{|\xi|^{\alpha}}{2}\right).
\end{align}
Finally, throughout let $\mu_{\alpha,d} = \mu_{\alpha,1} \otimes \dots \otimes \mu_{\alpha,1}$ be the product probability measure on $\bbr^d$ with corresponding 
characteristic function given, for all $\xi \in \bbr^d$, by
\begin{align}\label{eq:StableIndAxes}
\hat{\mu}_{\alpha,d}(\xi) = \prod_{k=1}^d \hat{\mu}_{\alpha,1}(\xi_k) = \exp\left(\int_{\bbr^d} \left(e^{i \langle \xi;u \rangle}-1-i\langle \xi;u \rangle\right)\nu_{\alpha,d}(du)\right),
\end{align}
and with,
\begin{align}\label{eq:LevyIndAxes}
\nu_{\alpha,d}(du) = \sum_{k=1}^d \delta_0(du_1) \otimes \dots \otimes \delta_0(du_{k-1}) \otimes \nu_{\alpha,1}(du_k) \otimes \delta_0(du_{k+1}) \otimes \dots \otimes \delta_0(du_d),
\end{align}
where $\delta_0$ is the Dirac measure at $0$.

In the sequel, $\mathcal{S}(\bbr^d)$ is the Schwartz space of infinitely differentiable functions which, with their derivatives of any order, are rapidly decreasing, and $\mathcal{F}$ is the Fourier transform operator given, for all $f \in \mathcal{S}(\bbr^d)$ and all $\xi \in \bbr^d$, by
\begin{align*}
\mathcal{F}(f)(\xi) = \int_{\bbr^d} f(x) e^{- i \langle x; \xi \rangle} dx.
\end{align*}
On $\mathcal{S}(\bbr^d)$, the Fourier transform is an isomorphism and the following well-known inversion formula holds
\begin{align*}
f(x) = \frac{1}{(2\pi)^d} \int_{\bbr^d} \mathcal{F}(f)(\xi)e^{i \langle \xi ; x\rangle} d\xi,\quad x\in \bbr^d.
\end{align*}
$\mathcal{C}_c^\infty(\bbr^d)$ is the space of infinitely differentiable functions on $\bbr^d$ with compact support and $\|\cdot\|_{\infty,\bbr}$ denotes the supremum norm on $\bbr$.  Let $\mathcal{C}_b(\bbr^d)$ be the space of bounded continuous functions on $\bbr^d$ and let $\mathcal{C}^1_b(\bbr^d)$ be the space of continuously differentiable functions which are bounded on $\bbr^d$ together with their first derivatives.~For $p \in (1,+\infty)$, $L^p(\mu_\alpha)$ denotes the space of equivalence classes (with respect to $\mu_\alpha$-almost everywhere equality) of functions which are Borel measurable and which are $p$-summable with respect to the probability measure $\mu_\alpha$.~This space is endowed with the usual norm $\|\cdot\|_{L^p(\mu_\alpha)}$ defined, for all suitable $f$, by
\begin{align*}
\|f\|_{L^p(\mu_\alpha)} := \left(\int_{\bbr^d} |f(x)|^p \mu_\alpha(dx)\right)^{\frac{1}{p}}.
\end{align*}
Similarly, for $p \in (1, +\infty)$, $L^p(\bbr^d,dx)$ denotes the classical Lebesgue space where the reference measure is the Lebesgue measure. It is endowed with the norm $\|\cdot\|_{L^p(\bbr^d,dx)}$ defined, for all suitable $f$, by
\begin{align*}
\|f\|_{L^p(\bbr^d,dx)} := \left(\int_{\bbr^d} |f(x)|^p dx\right)^{\frac{1}{p}}.
\end{align*}
Next, let us introduce two semigroups of operators acting on $\mathcal{S}(\bbr^d)$ naturally associated with $\gamma$ and $\mu_\alpha$. Let $(P^{\gamma}_t)_{t\geq 0}$ and $(P^{\nu_\alpha}_t)_{t\geq 0}$ be defined, for all $f\in \mathcal{S}(\bbr^d)$, all $x\in \bbr^d$ and all $t \geq 0$, by
\begin{align}\label{eq:OUSM}
P^{\gamma}_t(f)(x) = \int_{\mathbb{R}^d} f(xe^{-t}+\sqrt{1-e^{-2t}} y) \gamma(dy),
\end{align}
\begin{align}\label{eq:StOUSM}
P^{\nu_\alpha}_t(f)(x) = \int_{\mathbb{R}^d} f(xe^{-t}+(1-e^{-\alpha t})^{\frac{1}{\alpha}} y)\mu_\alpha(dy).
\end{align}
The semigroup \eqref{eq:OUSM} is the classical Gaussian Ornstein-Uhlenbeck semigroup and the semigroup \eqref{eq:StOUSM} is the Ornstein-Uhlenbeck semigroup associated with the $\alpha$-stable probability measure $\mu_\alpha$ and recently put forward in the context of Stein's method for self-decomposable distributions (see \cite{AH18_1,AH19_2,AH20_3}). Finally, denoting by $((P^{\nu_\alpha}_t)^*)_{t\geq 0}$ the formal adjoint of the semigroup $(P^{\nu_\alpha}_t)_{t\geq 0}$, the ``carr\'e de Mehler" semigroup is defined, for all $t \geq 0$, by
\begin{align}\label{def:squared_Mehler}
\mathcal{P}_t = (P^{\nu_\alpha}_{\frac{t}{\alpha}})^* \circ P^{\nu_\alpha}_{\frac{t}{\alpha}}=P^{\nu_\alpha}_{\frac{t}{\alpha}} \circ (P^{\nu_\alpha}_{\frac{t}{\alpha}})^*.
\end{align}
In the sequel, $\partial_k$ denotes the partial derivative of order $1$ in the variable $x_k$,  $\nabla$ the gradient operator, $\Delta$ the Laplacean operator and $D^{\alpha-1}$, $(D^{\alpha-1})^*$ and $\mathbf{D}^{\alpha-1}$ the fractional operators defined, for all $f \in \mathcal{S}(\bbr^d)$ and all $x\in \bbr^d$, by
\begin{align}\label{eq:FracGrad}
D^{\alpha-1}(f)(x) :=\int_{\bbr^d} (f(x+u)-f(x)) u \nu_\alpha(du),
\end{align}
\begin{align}\label{eq:FracGradDual}
(D^{\alpha-1})^*(f)(x) := \int_{\bbr^d} (f(x-u)-f(x)) u \nu_\alpha(du),
\end{align}
\begin{align}\label{eq:FracGradWB}
\mathbf{D}^{\alpha-1}(f)(x) :=\frac{1}{2}\left(D^{\alpha-1}\left(f\right)(x)-(D^{\alpha-1})^*\left(f\right)(x)\right) .
\end{align}
Let us introduce also a gradient-length, $\nabla_{\nu}$, which is linked to the energy form appearing in the Poincar\'e-type inequality for the infinitely divisible probability measures (see \cite[Corollary $2$]{HPAS}).  For any L\'evy measure $\nu$ on $\bbr^d$, all $f \in \mathcal{S}(\bbr^d)$ and all $x \in \bbr^d$, 
\begin{align}\label{eq:gradient_length_ID}
\nabla_{\nu}(f)(x) = \left(\int_{\bbr^d}|f(x+u) - f(x)|^2 \nu(du)\right)^{\frac{1}{2}}. 
\end{align}
Also, let us recall the definition of the gamma transform of order $r>0$.~For $(P_t)_{t\ge 0}$ a $C_0$-semigroup of contractions on a Banach space, with generator $\mathcal{A}$, the gamma transform of order $r>0$ is defined, for all suitable $f$, by
\begin{align}\label{eq:Gam_Tr}
\left(E-\mathcal{A} \right)^{-\frac{r}{2}} f = \frac{1}{\Gamma(\frac{r}{2})} \int_0^{+\infty} \dfrac{e^{-t}}{t^{1-\frac{r}{2}}} P_t(f) dt,
\end{align}
where $E$ is the identity operator and where the integral on the right-hand side has to be understood in the Bochner sense.~Moreover, for all $\lambda >0$, all $r>0$ and all $f$ suitable, 
\begin{align*}
\left(\lambda E-\mathcal{A} \right)^{-\frac{r}{2}} f = \frac{1}{\Gamma(\frac{r}{2})} \int_0^{+\infty} \dfrac{e^{-\lambda t}}{t^{1-\frac{r}{2}}} P_t(f) dt,
\end{align*}
In particular, when this makes sense, as $\lambda$ tends to $0^+$, 
\begin{align*}
\left(-\mathcal{A} \right)^{-\frac{r}{2}} f = \frac{1}{\Gamma(\frac{r}{2})} \int_0^{+\infty} t^{\frac{r}{2}-1} P_t(f) dt.
\end{align*}
Finally, the generators of the two semigroups can be obtained through the Fourier representation formulas and it is straightforward to check that the respective generators are given, for $\alpha \in (1,2)$, for all $f\in \mathcal{S}(\bbr^d)$ and all $x\in \bbr^d$, by
\begin{align}\label{eq:OUgen}
\mathcal{L}^\gamma(f)(x)=-\langle x;\nabla(f)(x)\rangle + \Delta(f)(x),
\end{align}
\begin{align}\label{eq:StOUgen}
\mathcal{L}^\alpha(f)(x)= -\langle x;\nabla(f)(x) \rangle + \int_{\bbr^d} \langle \nabla(f)(x+u)-\nabla(f)(x);u \rangle \nu_\alpha(du). 
\end{align}
\noindent
Recall also one of the main results of \cite{AH20_4}, giving a Bismut-type formula for the nondegenerate symmetric $\alpha$-stable probability measures on $\bbr^d$ with $\alpha \in (1,2)$: for all $f \in \mathcal{S}(\bbr^d)$, all $x \in \bbr^d$ and all $t>0$, 
\begin{align}\label{eq:Bismut_Stable}
D^{\alpha-1} P_t^{\nu_\alpha}(f)(x) = \dfrac{e^{- (\alpha-1)t}}{\left(1- e^{- \alpha t}\right)^{1- \frac{1}{\alpha}}} \int_{\bbr^d} y  f\left(xe^{-t} + (1- e^{- \alpha t})^{\frac{1}{\alpha}}y\right) \mu_\alpha(dy). 
\end{align}
Finally, recall the covariance representation obtained in \cite[Proposition 2]{HPAS} for $X \sim ID(b,\Sigma,\nu)$: for all $f,g \in \mathcal{S}(\bbr^d)$,
\begin{align}\label{eq:covariance_representation_gaussian}
\operatorname{Cov}(f(X),g(X))  = \int_0^1 \bbe \left[ \langle \Sigma \nabla(f)(X_z)  ; \nabla(g)(Y_z) \rangle + \int_{\bbr^d} \Delta_u(f)(X_z)\Delta_u(g)(Y_z) \nu(du) \right] dz, 
\end{align}
where $\Delta_u(f)(x) = f(x+u)-f(x)$ and where, for all $z \in [0,1]$, $(X_z,Y_z)$ has characteristic function given, for all $\xi_1, \xi_2 \in \bbr^d$, by
\begin{align*}
\varphi(\xi_1, \xi_2) = (\varphi_X(\xi_1) \varphi_X(\xi_2))^{1-z} \varphi_X(\xi_1+\xi_2)^{z}.
\end{align*}
Next, let us investigate new covariance identities based on semigroup techniques (inspired from \cite[Section $5$]{AH18_1}) and the corresponding asymmetric covariance estimates (see, e.g., \cite{CCL13} for log-concave measures and \cite{HN19} for convex measures which include Cauchy-type probability measures on $\bbr^d$). But, first, as a simple consequence of the covariance identity obtained in \cite[Proposition $2$]{HPAS} combined with H\"older's inequality, one has the following proposition for the Gaussian and for the general infinitely divisible cases (we refer to \cite{HPAS} and to \cite{S} for any unexplained definitions regarding infinitely divisible distributions).

\begin{prop}\label{prop:BL_type_ID}
(i) Let $X\sim \gamma$.~Then, for all $f,g\in \mathcal{S}(\bbr^d)$, all $p \in [1, +\infty)$ and $q=p/(p-1)$ (with $q=+\infty$ when $p=1$), 
\begin{align}\label{ineq:BL_Gaussian}
|\operatorname{Cov}(f(X),g(X))| \leq \|\nabla(f)\|_{L^p(\gamma)} \|\nabla(g)\|_{L^q(\gamma)}. 
\end{align}
(ii) Let $X \sim ID(b,0, \nu)$.~Then, for all $f,g\in \mathcal{S}(\bbr^d)$, all $p \in [1,+\infty)$ and $q=p/(p-1)$ (with $q=+\infty$ when $p=1$), 
\begin{align}\label{ineq:BL_ID}
|\operatorname{Cov}(f(X),g(X))| \leq \|\nabla_\nu(f)\|_{L^p(\mu)} \|\nabla_\nu(g)\|_{L^q(\mu)},
\end{align}
where $X \sim \mu$.  
\end{prop}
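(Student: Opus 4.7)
The plan is that both parts follow directly from the covariance representation \eqref{eq:covariance_representation_gaussian} via a pointwise Cauchy--Schwarz step and H\"older's inequality, once one identifies the marginal laws of the coupling $(X_z,Y_z)$. The key observation is that each of $X_z$ and $Y_z$ is individually distributed as $X$: setting $\xi_2=0$ in $\varphi(\xi_1,\xi_2)=(\varphi_X(\xi_1)\varphi_X(\xi_2))^{1-z}\varphi_X(\xi_1+\xi_2)^z$ collapses the expression to $\varphi_X(\xi_1)^{1-z}\varphi_X(\xi_1)^z=\varphi_X(\xi_1)$, and symmetrically for $\xi_1=0$. With this in hand, the H\"older bounds produced will be $z$-independent, so integration on $[0,1]$ contributes no extra factor.

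For part (i), take $X\sim\gamma=ID(0,I_d,0)$, so $\nu=0$ kills the jump term in \eqref{eq:covariance_representation_gaussian} and leaves $\operatorname{Cov}(f(X),g(X))=\int_0^1\bbe[\langle\nabla f(X_z),\nabla g(Y_z)\rangle]\,dz$. Apply Cauchy--Schwarz in $\bbr^d$ inside the expectation to obtain $|\bbe[\langle\nabla f(X_z),\nabla g(Y_z)\rangle]|\le\bbe[\|\nabla f(X_z)\|\,\|\nabla g(Y_z)\|]$. Then H\"older's inequality with exponents $p$ and $q$, combined with the marginal identification $X_z\sim Y_z\sim\gamma$, bounds this by $\|\nabla f\|_{L^p(\gamma)}\|\nabla g\|_{L^q(\gamma)}$. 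Integrating the triangle inequality over $z\in[0,1]$ then yields \eqref{ineq:BL_Gaussian}. The endpoint $p=1$, $q=+\infty$ is obtained by replacing H\"older by the direct bound $\|\nabla g(Y_z)\|\le\|\nabla g\|_{L^\infty(\gamma)}$ inside the expectation.

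For part (ii), take $X\sim ID(b,0,\nu)$; now $\Sigma=0$ removes the quadratic term and only the jump contribution remains in \eqref{eq:covariance_representation_gaussian}. A pointwise application of Cauchy--Schwarz in $L^2(\nu)$ gives
\[
\left|\int_{\bbr^d}\Delta_u f(X_z)\Delta_u g(Y_z)\nu(du)\right|\le\nabla_\nu(f)(X_z)\,\nabla_\nu(g)(Y_z),
\]
with $\nabla_\nu$ as defined in \eqref{eq:gradient_length_ID}. The remainder of the argument is identical to (i): H\"older on the expectation, followed by the marginal identification $X_z\sim Y_z\sim\mu$, produces $\|\nabla_\nu(f)\|_{L^p(\mu)}\|\nabla_\nu(g)\|_{L^q(\mu)}$, and integration in $z$ preserves this bound. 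No substantive obstacle arises in the argument, since the heavy lifting is already done in \eqref{eq:covariance_representation_gaussian}; the only subtlety worth noting explicitly is the coincidence of the marginal laws of $(X_z,Y_z)$ with that of $X$, without which H\"older would produce norms under an auxiliary law rather than under $\gamma$ or $\mu$.
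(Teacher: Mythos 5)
Your proof is correct and follows exactly the route the paper intends: the paper presents this proposition as "a simple consequence of the covariance identity obtained in \cite[Proposition $2$]{HPAS} combined with H\"older's inequality," and you supply precisely the missing details (pointwise Cauchy--Schwarz in $\bbr^d$ resp. $L^2(\nu)$, the identification of the marginals of $(X_z,Y_z)$ with the law of $X$, then H\"older and integration in $z$). Nothing to correct.
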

\noindent
In the context of Stein's method for self-decomposable laws, a general covariance identity has been obtained in \cite[Theorem $5.10$]{AH20_3} in the framework of closed symmetric non-negative definite bilinear forms with dense domain under some coercive assumption.~Indeed, the identity $(5.15)$ there can be understood as a generalization of $(2.5)$ in \cite{CCL13} and of $(3.2)$ in \cite{HN19} from which asymmetric covariance estimates can be obtained. Let us generalize Proposition \ref{prop:BL_type_ID} beyond the scope of infinitely divisible distributions.~Key properties in order to establish these Brascamp-Lieb-type inequalities are sub-commutation and/or some form of regularization (see, e.g., \cite{CCL13,ABJ20}).~Actually, let us provide, first, a slight extension of \cite[Theorem $5.10$]{AH20_3}.    

\begin{thm}\label{thm:cov_rep_dirichlet_form}
Let $H$ be a real Hilbert space with inner product $\langle \cdot ;\cdot \rangle_H$ and induced norm $\| \cdot \|_H$. Let $\mathcal{E}$ be a closed symmetric non-negative definite bilinear form with dense linear domain $\mathcal{D}(\mathcal{E})$.~Let $\{G_\alpha:\, \alpha>0\}$ and $\{P_t:\, t>0\}$ be, respectively, the strongly continuous resolvent and the strongly continuous semigroup on $H$ associated with $\mathcal{E}$. Moreover, let there exist a closed linear subspace $H_0 \subset H$ and a function $\psi$ continuous on $(0,+\infty)$ with values in $(0,1]$ such that $\underset{t \rightarrow +\infty}{\lim} \psi(t)=0$,
\begin{align*}
\int_0^{+\infty}\psi(t)dt <+\infty,
\end{align*}
and such that, for all $t>0$ and all $u \in H_0$,
\begin{align}\label{ineq:Poinc}
\|P_t(u)\|_H \leq \psi(t) \|u\|_H.
\end{align}
Let $G_{0^+}$ be the operator defined by
\begin{align}\label{eq:Gzero}
G_{0^+}(u):=\int_{0}^{+\infty} P_t(u) dt,\quad u \in H_0,
\end{align}
where the above integral is understood to be in the Bochner sense. Then, for all $u\in H_0$, $G_{0^+}(u)$ belongs to $\mathcal{D}(\mathcal{E})$ and, for all $v\in \mathcal{D}(\mathcal{E})$,
\begin{align}\label{eq:SteinKernel}
\mathcal{E} \left(G_{0^+}(u),v\right)=\langle u ; v \rangle_H .
\end{align}
Moreover, for all $u \in H_0$,
\begin{align}\label{ineq:energy}
\mathcal{E}\left(G_{0^+}(u),G_{0^+}(u)\right) \leq \left(\int_0^{+\infty} \psi(t)dt \right)\|u\|^2_H. 
\end{align}
\end{thm}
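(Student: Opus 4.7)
The plan is to reduce Theorem \ref{thm:cov_rep_dirichlet_form} to the standard resolvent theory of closed symmetric non-negative definite bilinear forms and then pass to the limit $\alpha\to 0^+$. Recall that for each $\alpha>0$ the resolvent $G_\alpha$ is the bounded self-adjoint operator on $H$ characterized by
\[
\mathcal{E}(G_\alpha u, v) + \alpha\langle G_\alpha u, v\rangle_H = \langle u, v\rangle_H, \quad v \in \mathcal{D}(\mathcal{E}),
\]
and admits the Laplace representation $G_\alpha u = \int_0^{+\infty} e^{-\alpha t} P_t u \, dt$ as a Bochner integral in $H$. The target identity \eqref{eq:SteinKernel} is formally obtained by sending $\alpha\to 0^+$ in the resolvent identity; all the work lies in justifying this limit inside $\mathcal{D}(\mathcal{E})$.

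First, I would establish the absolute convergence in $H$ of the Bochner integral defining $G_{0^+}(u)$ for $u \in H_0$, as well as the $H$-convergence $G_\alpha u \to G_{0^+}(u)$ as $\alpha\to 0^+$. Since $t\mapsto P_t u$ is continuous from $(0,+\infty)$ into $H$ by strong continuity of the semigroup and $\|P_t u\|_H\le \psi(t)\|u\|_H$ with $\psi \in L^1(0,+\infty)$, absolute Bochner integrability holds and yields
\[
\|G_{0^+}(u)\|_H \leq \left(\int_0^{+\infty}\psi(t)\,dt\right)\|u\|_H .
\]
Dominated convergence for Bochner integrals, applied to $e^{-\alpha t} P_t u$ with dominating majorant $\psi(t)\|u\|_H$, then gives the claimed strong convergence $G_\alpha u \to G_{0^+}(u)$ in $H$.

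Second, and this is the crux of the argument, I would show that $G_{0^+}(u)\in \mathcal{D}(\mathcal{E})$. Testing the resolvent identity with $v=G_\alpha u$ and using the preceding bound yields
\[
\mathcal{E}(G_\alpha u, G_\alpha u) + \alpha\|G_\alpha u\|_H^2 \;=\; \langle u, G_\alpha u\rangle_H \;\leq\; \left(\int_0^{+\infty}\psi(t)\,dt\right)\|u\|_H^2 ,
\]
so the family $\{G_\alpha u\}_{\alpha\in(0,1]}$ is uniformly bounded in the Hilbert space $(\mathcal{D}(\mathcal{E}),\mathcal{E}_1)$, where $\mathcal{E}_1(f,g):=\mathcal{E}(f,g)+\langle f,g\rangle_H$; closedness of $\mathcal{E}$ is essential here. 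Extracting an $\mathcal{E}_1$-weakly convergent subsequence $G_{\alpha_n}u$ with weak limit $w\in\mathcal{D}(\mathcal{E})$ and noting that $\mathcal{E}_1$-weak convergence implies $H$-weak convergence, uniqueness of weak limits combined with the strong $H$-convergence of the first step forces $w=G_{0^+}(u)$, whence $G_{0^+}(u)\in\mathcal{D}(\mathcal{E})$.

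Finally, passing to the limit along this subsequence in the resolvent identity for fixed $v\in \mathcal{D}(\mathcal{E})$, the term $\alpha_n\langle G_{\alpha_n}u, v\rangle_H$ vanishes in view of the uniform $H$-bound on $G_{\alpha_n}u$, while $\mathcal{E}(G_{\alpha_n}u, v)\to \mathcal{E}(G_{0^+}(u),v)$ by $\mathcal{E}_1$-weak convergence (together with the already established $H$-convergence of $\langle G_{\alpha_n}u,v\rangle_H$); this delivers \eqref{eq:SteinKernel}. The energy bound \eqref{ineq:energy} then follows immediately by testing \eqref{eq:SteinKernel} against $v=G_{0^+}(u)$, applying Cauchy--Schwarz, and invoking the $H$-norm estimate on $G_{0^+}(u)$ from the first step. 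The main obstacle is the weak-compactness argument placing $G_{0^+}(u)$ into $\mathcal{D}(\mathcal{E})$; a subtlety to mind is that $H_0$ is not assumed to be $(P_t)$-invariant, so the decay \eqref{ineq:Poinc} may only be applied to the fixed vector $u$ itself and not to $P_s u$ or $G_\alpha u$.
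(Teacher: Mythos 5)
Your proof is correct and follows essentially the same route as the paper's: the proof is omitted there with a pointer to \cite[Theorem $5.10$]{AH20_3}, whose argument is precisely this resolvent-limit scheme — the Laplace representation $G_\alpha(u)=\int_0^{+\infty}e^{-\alpha t}P_t(u)\,dt$, the uniform bound on $\mathcal{E}(G_\alpha u,G_\alpha u)$ extracted from $\mathcal{E}(G_\alpha u,v)+\alpha\langle G_\alpha u;v\rangle_H=\langle u;v\rangle_H$ with $v=G_\alpha u$, and passage to the limit $\alpha\to 0^+$ using closedness of the form. Your use of weak compactness in $(\mathcal{D}(\mathcal{E}),\mathcal{E}_1)$ to place $G_{0^+}(u)$ in $\mathcal{D}(\mathcal{E})$ is a harmless variant of the usual argument (showing $\{G_\alpha u\}$ is $\mathcal{E}_1$-Cauchy via the resolvent identity), and your closing observation that the decay \eqref{ineq:Poinc} is only ever applied to the fixed vector $u$ itself is well taken.
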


\begin{proof}
The proof follows closely the lines of the one of \cite[Theorem $5.10$]{AH20_3} and so is omitted.
\end{proof}
\noindent
\begin{rem}\label{rem:weak_strong}
(i) Note that, from the semigroup property, for all $n \geq 1$, all $t>0$ and all $f \in H_0$, 
\begin{align*}
\|P_t(f)\|_H \leq \left(\psi\left(\frac{t}{n}\right)\right)^n \|f\|_H. 
\end{align*}
Then, the behavior at $0^+$ of the function $\psi$ can lead to an exponential convergence result. Indeed, assuming that the function $\psi$ is regular near $0$ with $\psi(0^+)=1$ and with $\psi'(0^+)<0$, one gets, for all $t>0$,
\begin{align*}
\underset{n \rightarrow +\infty}{\lim} \left(\psi\left(\frac{t}{n}\right)\right)^n = \exp\left(\psi'(0)t\right).
\end{align*}
(ii) Let us next consider a rather long list of examples where our results provide covariance identities and $L^2$-estimates. In some situations, where strong gradient bounds are known, it is possible to obtain $L^p-L^q$ asymmetric covariance estimates. First, very classically, the Dirichlet form associated with the standard Gaussian probability measure on $\bbr^d$ is given, for all (real-valued) $f,g \in \mathcal{S}(\bbr^d)$, by
\begin{align}\label{eq:dirichlet_form_gamma}
\mathcal{E}_{\gamma}(f,g) = \int_{\bbr^d} \langle \nabla(f)(x) ; \nabla(g)(x) \rangle \gamma(dx),   
\end{align}
and an integration by parts formula ensures that $\mathcal{E}_{\gamma}$ is closable. The associated semigroup is the Ornstein-Uhlenbeck semigroup $(P_t^{\gamma})_{t \geq 0}$ given in \eqref{eq:OUSM} with generator given by \eqref{eq:OUgen}.~It is well-known, thanks to the Gaussian Poincar\'e inequality, that, for all $f\in \mathcal{S}(\bbr^d)$ with $\gamma(f) := \int_{\bbr^d} f(x) \gamma(dx) = 0$ and all $t>0$, 
\begin{align*}
\|P_t^\gamma(f)\|_{L^2(\gamma)} \leq e^{-t} \|f\|_{L^2(\gamma)}. 
\end{align*}
Thus, Theorem \ref{thm:cov_rep_dirichlet_form} provides the following covariance representation: for all $f,g \in \mathcal{S}(\bbr^d) $ with $\int_{\bbr^d} f(x) \gamma(dx) = 0$,
\begin{align}\label{eq:cov_rep_gauss_SG}
\int_{\mathbb{R}^d} f(x)g(x) \gamma(dx) = \int_{\bbr^d} \langle \nabla(\tilde{f}_\gamma)(x) ; \nabla(g)(x) \rangle \gamma(dx) = \mathcal{E}_\gamma(g,\tilde{f}_\gamma),
\end{align}
where
\begin{align*}
\tilde{f}_\gamma = \int_{0}^{+\infty}P_t^\gamma(f)dt =(-\mathcal{L}^\gamma)^{-1} (f). 
\end{align*}
A straightforward application of \cite[Theorem $5.10$]{AH20_3} ensures, for all $f \in \mathcal{S}(\bbr^d)$ with $\int_{\bbr^d}f(x) \gamma(dx) = 0$, that
\begin{align*}
\mathcal{E}_\gamma(\tilde{f}_\gamma, \tilde{f}_\gamma) \leq \|f\|^{2}_{L^2(\gamma)} \leq \mathcal{E}_\gamma(f,f),
\end{align*}
so that, by the Cauchy-Schwarz inequality,
\begin{align*}
\left|\int_{\bbr^d}f(x)g(x) \gamma(dx) \right| \leq \mathcal{E}_\gamma(g,g)^{\frac{1}{2}} \mathcal{E}_\gamma(f,f)^{\frac{1}{2}}, 
\end{align*}
which is a particular instance of Proposition \ref{prop:BL_type_ID} (i), with $p=q=2$. To obtain the general $L^p-L^q$ covariance estimates based on the covariance representation \eqref{eq:cov_rep_gauss_SG}, one can use the commutation formula $\nabla(P_t^\gamma(f)) = e^{-t} P_t^\gamma(\nabla(f))$ so that the following inequality holds true: for all $p \in (1, +\infty)$ and all $f \in \mathcal{S}(\bbr^d)$ with $\int_{\bbr^d} f(x) \gamma(dx) = 
0$, 
\begin{align}\label{ineq:Lp_est}
\|\nabla(\tilde{f}_\gamma)\|_{L^p(\gamma)} \leq \|\nabla(f)\|_{L^p(\gamma)}.  
\end{align}
A direct application of H\"older's inequality combined with \eqref{eq:cov_rep_gauss_SG} and with \eqref{ineq:Lp_est} provides the general case of Proposition \ref{prop:BL_type_ID}~(i).~Note that the previous lines of reasoning do not depend on the dimension of the ambient space so that the covariance estimate  \eqref{ineq:BL_Gaussian} extends to the infinite dimensional setting and to the Malliavin calculus framework (see, e.g., \cite[Section 2.9]{NP12}). The details are left to the interested reader (see, also \cite{HPA95, HP02}).\\
(iii) Next, let us consider another hypercontractive semigroup related to a classical probability measure on $\bbr$ with finite exponential moments. (The corresponding multi-dimensional version follows by tensorization.) Let $\alpha \geq 1/2$ and let $\gamma_{\alpha, 1}$ be the gamma probability measure on $(0,+\infty)$ with shape parameter $\alpha$ and scale parameter $1$. Let $\mathcal{E}_\alpha$ denote the Dirichlet form associated with the Laguerre dynamics and given, for all $f,g \in \mathcal{C}_c^{\infty}((0, +\infty))$, by
\begin{align*}
\mathcal{E}_\alpha(f,g) = \int_{0}^{+\infty} x f'(x)g'(x) \gamma_{\alpha,1}(dx). 
\end{align*}
This closable symmetric bilinear form generates the well-known Laguerre semigroup $(P_t^{\alpha,1})_{t\geq 0}$ (see, e.g., \cite{BGL14, AS_17}) with generator given, for all $f \in \mathcal{C}^{\infty}_c((0,+\infty))$ and all $x \in (0, +\infty)$, by
\begin{align*}
\mathcal{L}^{\alpha,1} (f)(x) = x f''(x) + (\alpha -x) f'(x).
\end{align*}
Recall that the Poincar\'e inequality for this dynamics follows, e.g., from the spectral expansion of a test function belonging to $\mathcal{D}(\mathcal{E}_\alpha)$ along the Laguerre orthonormal polynomials which are the eigenfunctions of $\mathcal{L}^{\alpha,1}$.~Moreover, letting $\partial_\sigma$ be the differential operator defined, for all $f \in \mathcal{C}^{\infty}_c((0,+\infty))$ and all $x \in (0, +\infty)$, by
\begin{align*}
\partial_\sigma(f)(x) : = \sqrt{x} f'(x), 
\end{align*}
the following intertwining formula has been proved in \cite[Lemma $11$]{AS_17}: for all $f \in \mathcal{C}^{\infty}_c((0,+\infty))$, all $x \in (0, +\infty)$ and all $t>0$,
\begin{align}\label{eq:intert_laguerre}
\partial_\sigma(P_t^{\alpha,1}(f))(x) = e^{-\frac{t}{2}} \bbe \left( \frac{\left(e^{-\frac{t}{2}}\sqrt{x} + \sqrt{\frac{1-e^{-t}}{2}} Z\right)}{(X_t^x)^{\frac{1}{2}}} \partial_\sigma(f)(X^x_t) \right),
\end{align}
where $Z$ is a standard normal random variable and where $X_t^x$ is given by, 
\begin{align*}
X_t^x = (1-e^{-t} ) X_{\alpha-\frac{1}{2},1} + \left( e^{-\frac{t}{2}} \sqrt{x} + \sqrt{\frac{1-e^{-t}}{2}} Z \right)^2,
\end{align*}
with $X_{\alpha-1/2,1} \sim \gamma_{\alpha-1/2,1}$ independent of $Z$.~Using \eqref{eq:intert_laguerre}, the following sub-commutation inequality holds true: for all $f \in \mathcal{C}_c^{\infty}((0,+\infty))$, all $x >0$, and all $t>0$
\begin{align}\label{ineq:sub_comm}
|\partial_\sigma(P_t^{\alpha,1}(f))(x)| \leq e^{-\frac{t}{2}} P_t^{\alpha,1}(|\partial_\sigma(f)|)(x).
\end{align}
Performing a reasoning similar to the one in the Gaussian case, one gets the following asymmetric covariance estimate: for all $f,g \in \mathcal{C}^{\infty}_{c}((0, +\infty))$, 
all $p \in (1, +\infty)$ and $q = p/(p-1)$, 
\begin{align*}
\left|\operatorname{Cov}(f(X_{\alpha,1}),g(X_{\alpha,1}))\right| \leq 2 \|\partial_\sigma(g)\|_{L^q(\gamma_{\alpha,1})} \|\partial_{\sigma}(f)\|_{L^p(\gamma_{\alpha,1})}. 
\end{align*}
The previous sub-commutation inequality can be seen as a direct consequence of the Bakry-Emery criterion since for this Markov diffusion semigroup $\Gamma_2(f) \geq  \Gamma(f)/2$, for all $f \in  \mathcal{C}_c^{\infty}((0,+\infty))$.\\
(iv) Next, let us consider the Jacobi semigroup $(Q_t^{\alpha,\beta})_{t\ge 0}$, related to the beta probability measures on $[-1,1]$ of the form
\begin{align*}
\mu_{\alpha,\beta}(dx) =C_{\alpha,\beta} (1-x)^{\alpha-1} (1+x)^{\beta-1} \bbone_{[-1,1]}(x)dx,
\end{align*}
with $\alpha>0,\beta>0$ such that $\min (\alpha,\beta)>3/2$ and where $C_{\alpha,\beta}>0$ is a normalization constant.~The generator of the Jacobi semigroup is given, for all $f \in  \mathcal{C}_c^{\infty}([-1,1])$ and all $x \in [-1,1]$, by
\begin{align*} 
\mathcal{L}_{\alpha,\beta}(f)(x) = (1-x^2)f''(x)+((\beta-\alpha)-(\alpha+\beta)x)f'(x).
\end{align*}
Moreover, the corresponding ``carr\'e du champs" operator is $\Gamma_{\alpha,\beta}(f)(x) = (1-x^2)(f'(x))^2$, for all $f \in \mathcal{C}_c^{\infty}([-1,1])$ and all $x \in [-1,1]$, so that the natural gradient associated with the Jacobi operator is given, for all $f \in \mathcal{C}_c^\infty([-1,1])$ and all $x \in [-1,1]$, by
\begin{align*}
\partial_{\alpha,\beta}(f)(x) : = \sqrt{1-x^2}f'(x). 
\end{align*} 
According to \cite[Section $2.1.7$]{BGL14}, this Markov diffusion operator satisfies a curvature-dimension condition of the type $CD(\kappa_{\alpha,\beta},n_{\alpha,\beta})$ for some $\kappa_{\alpha,\beta},n_{\alpha,\beta}>0$ depending only on $\alpha$ and on $\beta$. In particular, it satisfies a curvature dimension condition $CD(\kappa_{\alpha,\beta},\infty)$ and so one has the following sub-commutation formula: for all $f \in \mathcal{C}_c^{\infty}([-1,1])$, all $t>0$ and all $x \in [-1,1]$, 
\begin{align*}
\left|\partial_{\alpha,\beta}\left(Q^{\alpha,\beta}_t(f)\right)(x)\right| \leq e^{-\kappa_{\alpha,\beta}t} Q^{\alpha,\beta}_t\left(\left|\partial_{\alpha,\beta}(f)\right|\right)(x). 
\end{align*}
The covariance representation then reads as: for all $f , g \in \mathcal{C}_c^{\infty}([-1,1])$ with $\mu_{\alpha,\beta}(f) = 0$, 
\begin{align*}
\operatorname{Cov}(f(X_{\alpha,\beta}),g(X_{\alpha,\beta})) = \int_{[-1,1]} (1-x^2) g'(x) \tilde{f}'(x) \mu_{\alpha,\beta}(dx),  \quad \tilde{f} = \int_{0}^{+\infty} Q_t^{\alpha,\beta}(f)dt,
\end{align*} 
where $X_{\alpha,\beta} \sim \mu_{\alpha, \beta}$. Applying the same strategy as before, gives the following asymmetric covariance estimate: for all $f,g \in \mathcal{C}_c^{\infty}([-1,1])$, all $p \in (1, + \infty)$ and $q=p/(p-1)$, 
\begin{align*}
\left|\operatorname{Cov}(f(X_{\alpha,\beta}),g(X_{\alpha,\beta}))\right| \leq \frac{1}{\kappa_{\alpha,\beta}} \|\partial_{\alpha,\beta}(f)\|_{L^p(\mu_{\alpha,\beta})} \|\partial_{\alpha,\beta}(g)\|_{L^q(\mu_{\alpha,\beta})}. 
\end{align*}
(v) Let $\mu$ be a centered probability measure on $\bbr^d$ given by 
\begin{align*}
\mu(dx) = \frac{1}{Z(\mu)} \exp\left(-V(x)\right)dx,
\end{align*}
where $Z(\mu)>0$ is a normalization constant and where $V$ is a non-negative smooth function on $\bbr^d$ such that,
\begin{align*}
\operatorname{Hess}(V)(x) \geq \kappa I_d,\quad x\in \bbr^d,
\end{align*}
for some $\kappa>0$ and where $I_d$ is the identity matrix.~It is well-known that such a probability measure satisfies a Poincar\'e inequality with respect to the classical ``carr\'e du champs" (see, e.g., \cite[Theorem $4.6.3$]{BGL14}), namely, for all $f$ smooth enough on $\bbr^d$, 
\begin{align*}
\operatorname{Var}_\mu(f) \leq C_\kappa \int_{\bbr^d} \|\nabla(f)(x)\|^2 \mu(dx), 
\end{align*}
for some $C_\kappa>0$ depending on $\kappa$ and on $d\geq 1$ (here and in the sequel $\operatorname{Var}_\mu(f)$ denotes the variance of $f$ under $\mu$).~In particular, from the Brascamp and Lieb inequality, $C_\kappa \leq 1/\kappa$. For all $f,g \in \mathcal{C}_c^{\infty}(\bbr^d)$, let
\begin{align*}
\mathcal{E}_\mu(f,g) = \int_{\bbr^d} \langle \nabla(f)(x) ; \nabla(g)(x) \rangle \mu(dx).
\end{align*}
The bilinear form $\mathcal{E}_\mu$ is clearly symmetric on $L^2(\mu)$, and let us discuss briefly its closability following \cite[Section $2.6$]{Bog10}.~ First, $\rho_\mu$, the Lebesgue density of $\mu$  is such that, for all $p\in (1, + \infty)$ and for any compact subset, 
$K$, of $\bbr^d$,
\begin{align}\label{eq:cond_density}
\int_K \rho_\mu(x)^{-\frac{1}{p-1}}dx < +\infty.
\end{align}
Based on \eqref{eq:cond_density}, it is not difficult to see that the weighted Sobolev norms $\|\cdot\|_{1,p,\mu}$, defined, for all $f \in \mathcal{C}_c^{\infty}(\bbr^d)$, by 
\begin{align*}
\|f\|_{1,p,\mu} = \|f\|_{L^p(\mu)} + \sum_{k=1}^d \|\partial_k(f)\|_{L^p(\mu)},
\end{align*}
are closable. In particular, for $p=2$, this provides the closability of the form $\mathcal{E}_\mu$. Then, thanks to Theorem \ref{thm:cov_rep_dirichlet_form}, for all $f,g \in \mathcal{S}(\bbr^d)$ with $\mu(f)=0$, 
\begin{align}\label{eq:cov_logconcave}
\mathcal{E}_{\mu}(g,\tilde{f}_\mu) = \langle f;g \rangle_{L^2(\mu)}, \quad \tilde{f}_\mu = \int_0^{+\infty} P^{\mu}_t(f) dt,
\end{align}
with $(P_t^{\mu})_{t\geq 0}$ being the symmetric Markovian semigroup generated by the smallest closed extension of the symmetric bilinear form $\mathcal{E}_\mu$.~Finally, for all $f \in \mathcal{S}(\bbr^d)$, 
\begin{align*}
\Gamma_2(f)(x) & = \sum_{j,k} \left(\partial^2_{j,k}(f)(x)\right)^2 + \langle \nabla(f)(x) ; \operatorname{Hess}(V)(x)\nabla(f)(x)\rangle \geq \kappa \Gamma(f)(x).
\end{align*}
In other words, the curvature-dimension condition $CD(\kappa,\infty)$ is satisfied so that the following strong gradient bound holds true: for all $t>0$ and all $f \in \mathcal{S}(\bbr^d)$,
\begin{align*}
\sqrt{\Gamma(P^{\mu}_t)(f)(x)} \leq e^{- \kappa t} P^{\mu}_t \left(\sqrt{\Gamma(f)}\right)(x).  
\end{align*}
Then, the following asymmetric covariance estimate holds true: for all $f,g \in \mathcal{S}(\bbr^d)$, all $p \in (1,+\infty)$ and $q=p/(p-1)$, 
\begin{align*}
\left| \operatorname{Cov}(f(X);g(X))\right| \leq \frac{1}{\kappa} \|\nabla(g)\|_{L^q(\mu)} \|\nabla(f)\|_{L^p(\mu)},
\end{align*} 
with $X \sim \mu$.~Combining \eqref{eq:cov_logconcave} with estimates from \cite{CCL13}, one retrieves the Brascamp and Lieb inequality for strictly log-concave measures (i.e., such that  $\operatorname{Hess}(V)(x) > 0$, for all $x \in \bbr^d$) as well as its asymmetric versions (see, e.g., \cite[Theorem $1.1$]{CCL13}).\\
(vi) Again, let us discuss a class of probability measures which lies at the interface of the non-local and the local frameworks. Let $m>0$ and let $\mu_m$ be the probability measure on $\bbr^d$ given by 
\begin{align*}
\mu_m(dx) = c_{m,d} \left(1+ \|x\|^2\right)^{-m-d/2}dx,
\end{align*} 
for some normalization constant $c_{m,d}>0$ depending only on $m$ and on $d$.~First, the  characteristic function of a random vector with law $\mu_m$ is given (see \cite[Theorem II]{Tak89}), for all $\xi \in \bbr^d$, by
\begin{align}\label{eq:rep_charac_Cauchy} 
\varphi_m(\xi) = \exp \left(\int_{\bbr^d}\left(e^{i \langle \xi;u \rangle}-1- \frac{i \langle u ; \xi \rangle}{1+\|u\|^2}\right)\nu_m(du)\right). 
\end{align}
Above, $\nu_m$ is the L\'evy measure on $\bbr^d$ given by 
\begin{align}\label{eq:rep_Levy_measure}
\nu_m(du) = \frac{2}{\|u\|^d} \left( \int_0^{+\infty} g_m(2w) L_{\frac{d}{2}}\left(\sqrt{2w}\|u\|\right)dw \right) du,
\end{align}
with, for all $w>0$, 
\begin{align*}
g_m(w) = \frac{2}{\pi^2 w} \dfrac{1}{J_m^2(\sqrt{w})+Y_m^2(\sqrt{w})}, \quad L_{\frac{d}{2}}(w) = \frac{1}{(2\pi)^{\frac{d}{2}}} w^{\frac{d}{2}} K_{\frac{d}{2}}(w),
\end{align*}
$J_m$, $Y_m$ and $K_{d/2}$ denoting respectively the Bessel functions of the first and of the second kind and the modified Bessel function of order $d/2$. Based on the representations \eqref{eq:rep_charac_Cauchy} and \eqref{eq:rep_Levy_measure}, it is clear that $\mu_m$ is self-decomposable so that it is naturally associated with the non-local Dirichlet form given, for all $f,g \in \mathcal{C}_c^{\infty}(\bbr^d)$, by 
\begin{align*}
\mathcal{E}_m(f,g) = \int_{\bbr^d}\int_{\bbr^d} \Delta_u(f)(x)\Delta_u(g)(x) \nu_m(du) \mu_m(dx). 
\end{align*}
Moreover, since $\mu_m$ is infinitely divisible on $\bbr^d$, \cite[Corollary $2$]{HPAS} 
ensures that $\mu_m$ satisfies the following Poincar\'e-type inequality, for all $f$ smooth enough on $\bbr^d$, 
\begin{align*} 
\operatorname{Var}_m(f) \leq \int_{\bbr^d} \int_{\bbr^d} |f(x+u)-f(x)|^2 \nu_m(du) \mu_m(dx),
\end{align*}
(see, \cite[Proposition $5.1$]{AH20_3} for a proof based on semigroup argument when $m>1/2$).~Since $\mu_m \ast \nu_m << \mu_m$, the symmetric bilinear form $\mathcal{E}_m$ is closable so that $(\mathcal{P}^{m}_t)_{t\ge 0}$ the symmetric semigroup generated by the smallest closed extension $(\mathcal{E}_m, \mathcal{D}(\mathcal{E}_m))$ verifies the following ergodic property: for all $f \in L^2(\mu_m)$ with $\int_{\bbr^d} f(x) \mu_m(dx) = 0$, 
\begin{align*}    
\|\mathcal{P}^{m}_t(f)\|_{L^2(\mu_m)} \leq e^{-t} \|f\|_{L^2(\mu_m)}. 
\end{align*}
Then, one can apply Theorem \ref{thm:cov_rep_dirichlet_form} to obtain the following covariance representation: for all $f \in \mathcal{S}(\bbr^d)$ with $\mu_m(f)=
0$ and all $g \in \mathcal{S}(\bbr^d)$, 
\begin{align}\label{eq:covariance_Cauchy_1}
\mathcal{E}_m(g,\tilde{f}_m) = \langle f; g \rangle_{L^2(\mu_m)}, \quad \tilde{f}_m = \int_0^{+\infty} \mathcal{P}^m_t(f) dt.
\end{align}
Now, this class of probability measures has been investigated in the context of weighted Poincar\'e-type inequality (see, e.g., \cite{Bob_Led_09,Bo_Jou_Ma_16}). Indeed, for all $f$ smooth enough on $\bbr^d$ and all $m>0$, 
\begin{align*}
\operatorname{Var}_m(f) \leq C_{m,d} \int_{\bbr^d} f(x) (-\mathcal{L}^\sigma_m)(f)(x) \mu_m(dx),
\end{align*}
where the operator $\mathcal{L}^\sigma_m$ is given, on smooth functions, by
\begin{align*}
\mathcal{L}^\sigma_m(f)(x) = (1+ \|x\|^2) \Delta(f)(x) +2\left(1 - m -\frac{d}{2}\right)\langle x ; \nabla(f)(x) \rangle, 
\end{align*}
and where $C_{m,d}>0$ is a constant depending on $m$ and on $d$ which can be explicitly computed or bounded, depending on the relationships between $m$ and $d$ (see, \cite[Corollaries $5.2$ and $5.3$]{Bo_Jou_Ma_16}). The corresponding Dirichlet form is given, for all $f,g \in \mathcal{C}_c^{\infty}(\bbr^d)$, by
\begin{align*}
\mathcal{E}_m^\sigma (f,g) = \int_{\bbr^d} \left(1+\|x\|^2\right) \langle \nabla(f)(x) ; \nabla(g)(x) \rangle \mu_m(dx). 
\end{align*}
Once again, using the exponential $L^2(\mu_m)$-convergence to equilibrium of the semigroup induced by the form $\mathcal{E}_m^\sigma$ (denoted by $(\mathcal{P}^{m,\sigma}_t)_{t\geq 0}$), one obtains the following covariance representation formula: for all $f,g \in \mathcal{C}_c^{\infty}(\bbr^d)$ with $\mu_m(f)=0$,
\begin{align}\label{eq:covariance_Cauchy_2}
\mathcal{E}_m^\sigma(\tilde{f}_m^\sigma,g) = \langle f;g \rangle_{L^2(\mu_m)}, \quad \tilde{f}_m^\sigma = \int_0^{+\infty} \mathcal{P}^{m,\sigma}_t(f) dt.
\end{align}
Now, using either \eqref{eq:covariance_Cauchy_1} or \eqref{eq:covariance_Cauchy_2}, one gets 
\begin{align*}
&\left|\operatorname{Cov}(f(X_m),g(X_m))\right| \leq \|\nabla_{\nu_m}(f)\|_{L^2(\mu_m)}\|\nabla_{\nu_m}(g)\|_{L^2(\mu_m)} ,\\
&\left|\operatorname{Cov}(f(X_m),g(X_m))\right| \leq C_{m,d} \|\sigma \nabla(f)\|_{L^2(\mu_m)}  \|\sigma \nabla(g)\|_{L^2(\mu_m)},
\end{align*}
with $X_m \sim \mu_m$ and $\sigma(x)^2 = (1+\|x\|^2)$, for all $x \in \bbr^d$.\\
(vii) To conclude this long list of examples, let us return to the $\alpha$-stable probability measures on $\bbr^d$, $\alpha \in (0,2)$.  Let $\mathcal{E}$ be the non-negative definite symmetric bilinear form given, for all $f,g \in \mathcal{S}(\bbr^d)$, by
\begin{align*}
\mathcal{E}(f,g) = \int_{\bbr^d} \int_{\bbr^d} \Delta_u(f)(x) \Delta_u(g)(x) \nu_\alpha(du) \mu_\alpha(dx). 
\end{align*}
Recall that since $\mu_\alpha \ast \nu_\alpha << \mu_\alpha$ the bilinear form $\mathcal{E}$ is closable.~The associated semigroup, $(\mathcal{P}_t)_{t\ge 0}$ is the ``carr\'e de Mehler" semigroup defined in \eqref{def:squared_Mehler} whose $L^2(\mu_\alpha)$-generator is given, for all $f \in\mathcal{S}(\bbr^d)$, by 
\begin{align*}
\mathcal{L}(f) = \frac{1}{\alpha} \left( \mathcal{L}^\alpha + (\mathcal{L}^{\alpha})^*\right)(f),
\end{align*}
and already put forward in \cite{AH20_3,AH20_4}. Moreover, the Poincar\'e inequality for the $\alpha$-stable probability measure implies that, for all $f \in L^2(\mu_\alpha)$ with $\mu_\alpha(f)=0$ and all $t>0$,
\begin{align*}
\|\mathcal{P}_t(f)\|_{L^2(\mu_\alpha)} \leq e^{-t} \|f\|_{L^2(\mu_\alpha)}. 
\end{align*}
Thus, \cite[Theorem $5.10$]{AH20_3} provides the following covariance representation: for all $f,g \in \mathcal{S}(\bbr^d) $ with $\int_{\bbr^d} f(x) \mu_\alpha(dx) = 0$,
\begin{align*}
\int_{\mathbb{R}^d} f(x)g(x) \mu_\alpha(dx) = \int_{\bbr^d} \int_{\bbr^d} \Delta_u(g)(x) \Delta_u(\tilde{f})(x) \nu_\alpha(du)\mu_\alpha(dx) = \mathcal{E}(g,\tilde{f}),
\end{align*}
where
\begin{align*}
\tilde{f} = \int_{0}^{+\infty}\mathcal{P}_t(f)dt =(-\mathcal{L})^{-1} (f). 
\end{align*}
Moreover, still from \cite[Theorem $5.10$]{AH20_3}, for all $f \in \mathcal{S}(\bbr^d)$ with $\int_{\bbr^d}f(x) \mu_\alpha(dx) = 0$, 
\begin{align*}
\mathcal{E}(\tilde{f}, \tilde{f}) \leq \|f\|^{2}_{L^2(\mu_\alpha)} \leq \mathcal{E}(f,f),
\end{align*}
so that, by the Cauchy-Schwarz inequality,
\begin{align*}
\left|\int_{\bbr^d}f(x)g(x) \mu_\alpha(dx) \right|^2 \leq \mathcal{E}(g,g) \mathcal{E}(f,f), 
\end{align*}
which is a particular instance of Proposition \ref{prop:BL_type_ID} (ii), with $p=q=2$ and with $\nu = \nu_\alpha$. 
\end{rem}
\noindent
As detailled next, it is possible to refine the existence result given by Theorem \ref{thm:cov_rep_dirichlet_form}, by using a celebrated characterization of surjective closed densely defined linear operators on a Hilbert space by \textit{a priori} estimates on their adjoints.~This abstract existence result is well-known in the theory of partial differential equations (see, e.g., \cite[Theorem $2.20$]{HBre}) and seems to go back to \cite[Lemma $1.1$]{Hor_55}. Combined with an integration by parts and the Poincar\'e inequality, it allows to retrieve the covariance representations of Remark \ref{rem:weak_strong}. In particular, this characterization result allows to go beyond the assumption of a Poincar\'e inequality for the underlying probability measure in order to prove the existence of Stein's kernels. 

\begin{thm}\label{thm:beyond_Poincar_Hilbert_Version}
Let $H$ be a separable real Hilbert space with inner product $\langle \cdot ; \cdot \rangle_H$ and induced norm $\|\cdot\|_H$. Let $\mathcal{A}$ be a closed and densely defined linear operator on $H$ with domain $\mathcal{D}(\mathcal{A})$ and such that, for all $u \in \mathcal{D}(\mathcal{A}^*)$,
\begin{align}\label{ineq:a_priori_estimate}
\|u\|_{H} \leq C \|\mathcal{A}^*(u)\|_H,
\end{align} 
for some $C>0$ not depending on $u$ and where $(\mathcal{A}^*,\mathcal{D}(\mathcal{A}^*))$ is the adjoint of $(\mathcal{A},\mathcal{D}(\mathcal{A}))$. Then, for all $u \in H$, there exists $G(u) \in \mathcal{D}(\mathcal{A})$ such that, for all $v \in H$,
\begin{align*}
\langle \mathcal{A}(G(u)) ; v \rangle_H = \langle u;v \rangle_H.
\end{align*}
Moreover, if $\mathcal{A}$ is self-adjoint, then, for all $u \in H$,
\begin{align*}
\left|\langle \mathcal{A}(G(u)) ; G(u) \rangle\right| \leq C\|u\|^2_H. 
\end{align*} 
\end{thm}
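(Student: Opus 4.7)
The plan is to invoke the classical characterization of surjectivity of a closed densely defined operator via an \textit{a priori} estimate on its adjoint, which is essentially an application of the Hahn-Banach theorem combined with the Riesz representation theorem. The idea is to construct $G(u)$ by first building a bounded linear functional on the range of $\mathcal{A}^*$ and then extending it.

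Fix $u \in H$ and consider the linear functional $\phi$ defined on the subspace $\operatorname{Range}(\mathcal{A}^*) \subset H$ by $\phi(\mathcal{A}^*(v)) = \langle u; v \rangle_H$ for $v \in \mathcal{D}(\mathcal{A}^*)$. The a priori estimate \eqref{ineq:a_priori_estimate} is exactly what is needed to make this definition legitimate: if $\mathcal{A}^*(v_1) = \mathcal{A}^*(v_2)$, then $\|v_1 - v_2\|_H \leq C \|\mathcal{A}^*(v_1 - v_2)\|_H = 0$, so $v_1 = v_2$ (injectivity of $\mathcal{A}^*$), and moreover
\begin{align*}
|\phi(\mathcal{A}^*(v))| = |\langle u; v \rangle_H| \leq \|u\|_H \|v\|_H \leq C \|u\|_H \|\mathcal{A}^*(v)\|_H,
\end{align*}
so $\phi$ is continuous on $\operatorname{Range}(\mathcal{A}^*)$ with operator norm at most $C\|u\|_H$. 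By the Hahn-Banach theorem, $\phi$ extends to a continuous linear functional on $H$ with the same norm, and the Riesz representation theorem yields $w \in H$ with $\|w\|_H \leq C \|u\|_H$ and $\langle w; x \rangle_H$ equal to the extension for every $x \in H$. Restricting back to $\operatorname{Range}(\mathcal{A}^*)$ gives $\langle w; \mathcal{A}^*(v) \rangle_H = \langle u ; v \rangle_H$ for all $v \in \mathcal{D}(\mathcal{A}^*)$.

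This identity says precisely that $w \in \mathcal{D}(\mathcal{A}^{**})$ with $\mathcal{A}^{**}(w) = u$. Since $\mathcal{A}$ is closed and densely defined, $\mathcal{A}^{**} = \mathcal{A}$, so $w \in \mathcal{D}(\mathcal{A})$ and $\mathcal{A}(w) = u$. Setting $G(u) := w$ produces the desired element: for every $v \in H$,
\begin{align*}
\langle \mathcal{A}(G(u)); v \rangle_H = \langle u ; v \rangle_H.
\end{align*}
The subtlest point here, which I expect to be the main conceptual step, is the identification of $\mathcal{A}^{**}$ with $\mathcal{A}$; it relies crucially on closedness and dense definition of $\mathcal{A}$, and without these hypotheses the construction would produce only a weaker duality relation.

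For the final bound in the self-adjoint case, note that $\mathcal{D}(\mathcal{A}^*) = \mathcal{D}(\mathcal{A})$, so the a priori estimate \eqref{ineq:a_priori_estimate} can be applied directly to $G(u) \in \mathcal{D}(\mathcal{A})$, giving $\|G(u)\|_H \leq C \|\mathcal{A}(G(u))\|_H = C \|u\|_H$. Combining this with $\mathcal{A}(G(u)) = u$ and Cauchy-Schwarz,
\begin{align*}
|\langle \mathcal{A}(G(u)); G(u) \rangle_H| = |\langle u ; G(u) \rangle_H| \leq \|u\|_H \|G(u)\|_H \leq C \|u\|^2_H,
\end{align*}
which is the stated energy estimate. (Equivalently, the bound $\|G(u)\|_H \leq C\|u\|_H$ already follows from the Hahn-Banach step.) No further work is needed beyond checking that the Hahn-Banach extension preserves the constant, which is standard.
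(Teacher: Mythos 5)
Your proof is correct and is precisely the standard argument behind the result the paper cites for this theorem (the surjectivity criterion of H\"ormander/Br\'ezis, Theorem 2.20 in \cite{HBre}): well-definedness and boundedness of the functional on $\operatorname{Range}(\mathcal{A}^*)$ via the a priori estimate, Hahn--Banach plus Riesz representation, and the identification $\mathcal{A}^{**}=\mathcal{A}$ for a closed densely defined operator. The self-adjoint energy bound is also handled exactly as intended, so there is nothing to add.
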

\noindent
Let us further provide the Banach-space version of the previous result (see, e.g., \cite[Theorem $2.20$]{HBre} for a proof).  

\begin{thm}\label{thm:beyond_Poincar_Banach_Version}
Let $E$ and $F$ be two Banach spaces with respective norms $\|\cdot\|_E$ and $\|\cdot\|_F$. Let $\mathcal{A}$ be a closed and densely defined linear operator on $E$ with domain $\mathcal{D}(\mathcal{A})$ and such that, for all $u^* \in \mathcal{D}(\mathcal{A}^*)$,
\begin{align}\label{ineq:a_priori_estimate_2}
\|u^*\|_{F^*} \leq C \|\mathcal{A}^*(u^*)\|_{E^*},
\end{align} 
for some $C>0$ not depending on $u^*$ and where $(\mathcal{A}^*,\mathcal{D}(\mathcal{A}^*))$ is the adjoint of $(\mathcal{A},\mathcal{D}(\mathcal{A}))$. Then, for all $u \in F$, there exists $G(u) \in \mathcal{D}(\mathcal{A})$ such that,
\begin{align*}
\mathcal{A}(G(u)) = u.
\end{align*}
Conversely, if $\mathcal{A}$ is surjective then, for all $u^* \in \mathcal{D}(\mathcal{A}^*)$,
\begin{align}\label{ineq:a_priori_estimate_2}
\|u^*\|_{F^*} \leq C \|\mathcal{A}^*(u^*)\|_{E^*},
\end{align} 
for some $C>0$ not depending on $u^*$.
\end{thm}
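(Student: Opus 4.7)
The plan is to deduce the two implications from the Banach closed range theorem for closed densely defined linear operators between Banach spaces, combined with the open mapping theorem applied to the operator $\mathcal{A}$ acting on the quotient $\mathcal{D}(\mathcal{A})/\ker(\mathcal{A})$ equipped with the graph norm (which is a Banach space because $\mathcal{A}$ is closed). Throughout I shall read $\mathcal{A}$ as a closed densely defined operator $\mathcal{A}:\mathcal{D}(\mathcal{A})\subset E \to F$, so that $\mathcal{A}^{*}:\mathcal{D}(\mathcal{A}^{*})\subset F^{*} \to E^{*}$ is well-defined and closed.

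For the first implication, I would first observe that the a priori estimate forces $\ker(\mathcal{A}^{*})=\{0\}$, since $\mathcal{A}^{*}(u^{*})=0$ yields $\|u^{*}\|_{F^{*}}\leq 0$. Next, the same estimate shows that $R(\mathcal{A}^{*})$ is closed in $E^{*}$: if $\mathcal{A}^{*}(u_{n}^{*})\to v^{*}$ in $E^{*}$, then $(u_{n}^{*})$ is Cauchy in $F^{*}$ by the inequality, converging to some $u^{*}\in F^{*}$, and the closedness of $\mathcal{A}^{*}$ then gives $u^{*}\in \mathcal{D}(\mathcal{A}^{*})$ with $\mathcal{A}^{*}(u^{*})=v^{*}$. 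The Banach closed range theorem now asserts that $R(\mathcal{A})$ is closed in $F$ and coincides with $\ker(\mathcal{A}^{*})^{\perp}=F$. One defines $G(u)$ as any preimage of $u$ under $\mathcal{A}$, which lies in $\mathcal{D}(\mathcal{A})$ by construction.

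For the converse, assume $\mathcal{A}$ is surjective. Equip $\mathcal{D}(\mathcal{A})$ with the graph norm $\|v\|_{\mathcal{D}(\mathcal{A})}:=\|v\|_{E}+\|\mathcal{A}(v)\|_{F}$, making it into a Banach space, and pass to the quotient $\mathcal{D}(\mathcal{A})/\ker(\mathcal{A})$ to obtain a bijective bounded linear map onto $F$ induced by $\mathcal{A}$. By the open mapping theorem its inverse is bounded, which produces a constant $C>0$ and, for each $u\in F$, a preimage $v\in \mathcal{D}(\mathcal{A})$ with $\mathcal{A}(v)=u$ and $\|v\|_{E}\leq C\|u\|_{F}$. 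The a priori bound on $\mathcal{A}^{*}$ then follows by duality: for $u^{*}\in \mathcal{D}(\mathcal{A}^{*})$ and $u\in F$ with preimage $v$ as above,
\begin{align*}
|\langle u^{*},u\rangle_{F^{*},F}|
 =|\langle u^{*},\mathcal{A}(v)\rangle_{F^{*},F}|
 =|\langle \mathcal{A}^{*}(u^{*}),v\rangle_{E^{*},E}|
 \leq C\,\|\mathcal{A}^{*}(u^{*})\|_{E^{*}}\,\|u\|_{F},
\end{align*}
and taking the supremum over the unit ball of $F$ yields \eqref{ineq:a_priori_estimate_2}.

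The main obstacle is the careful bookkeeping with domains: $\mathcal{A}^{*}$ lives only on a (possibly proper) subspace of $F^{*}$, so one must be precise when asserting its closedness and invoking the closed range theorem in its unbounded version; symmetrically, the open mapping theorem needs to be applied to the quotient of $\mathcal{D}(\mathcal{A})$ by $\ker(\mathcal{A})$ under the graph norm rather than to $\mathcal{A}$ directly. Once the closedness of $\mathcal{A}^{*}$ and the Banach property of $(\mathcal{D}(\mathcal{A}),\|\cdot\|_{\mathcal{D}(\mathcal{A})})$ are in hand, the two directions follow cleanly as sketched.
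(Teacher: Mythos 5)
Your argument is correct and is essentially the standard proof of \cite[Theorem 2.20]{HBre}, which is precisely the reference the paper invokes in lieu of giving its own proof: the a priori estimate yields $\ker(\mathcal{A}^*)=\{0\}$ and closedness of $R(\mathcal{A}^*)$, the closed range theorem then gives $R(\mathcal{A})=\ker(\mathcal{A}^*)_\perp=F$, and the converse follows from the open mapping theorem applied to $\mathcal{D}(\mathcal{A})/\ker(\mathcal{A})$ under the graph norm together with duality. The only cosmetic point is that the quotient norm is an infimum over representatives, so for a given $u\in F$ one gets a preimage $v$ with $\|v\|_E\leq (C+\varepsilon)\|u\|_F$ for arbitrary $\varepsilon>0$ rather than with $C$ itself; letting $\varepsilon\to 0$ at the end of the duality computation disposes of this.
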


\begin{rem}\label{rem:without_Poinc}
Let us briefly explain how one can apply the previous existence theorem in the classical Gaussian setting. Let $H = L^2(\bbr^d,\gamma)$, let $H_0 = \{f \in H,\, \int_{\bbr^d}f(x) \gamma(dx) = 0\}$ and let $\mathcal{A} =- \mathcal{L}^{\gamma}$.  Note that if $f \in \mathcal{D}(\mathcal{A})$ then, for all $c \in \bbr$, $f+c \in \mathcal{D}(\mathcal{A})$ so that $f_0 = f - \int_{\bbr^d} f(x)\gamma(dx) \in \mathcal{D}(\mathcal{A})$. Moreover, $\mathcal{L}^\gamma$ is a linear densely defined self-adjoint operator on $H$. Finally, from the Gaussian Poincar\'e inequality, for all $f \in \mathcal{D}(\mathcal{A})$ such that $\int_{\bbr^d} f(x) \gamma(dx) = 0$, 
\begin{align*}
\|f\|^2_{H} \leq \int_{\bbr^d} f(x) \left(-\mathcal{L}^{\gamma}\right)(f)(x) \gamma(dx),
\end{align*} 
and so the Cauchy-Schwarz inequality gives, for all such $f$, 
\begin{align*}
\|f\|_{H} \leq \|(-\mathcal{L}^\gamma)^*(f)\|_H.
\end{align*}
Thus, from Theorem \ref{thm:beyond_Poincar_Hilbert_Version}, for all $f \in H_0$, there exists $\tilde{f} \in \mathcal{D}_0(\mathcal{A})$, such that, for all $v \in H_0$, 
\begin{align}\label{eq:cov_ident}
\langle \mathcal{A}(\tilde{f}) ; v  \rangle_H = \langle f; v \rangle_H, 
\end{align}
where $\mathcal{D}_0(\mathcal{A}) = \mathcal{D}(\mathcal{A}) \cap H_0$. Equation \eqref{eq:cov_ident} extends to all $v \in H$ by translation.
\end{rem}
\noindent
To conclude, this section discusses an example where the underlying probability measure does not satisfy an $L^2$-$L^2$ Poincar\'e inequality with respect to the classical energy form but for which it is possible to obtain a covariance identity with the standard ``carr\'e du champs operator" and corresponding estimates. 

\begin{prop}\label{lem:example_one_dim}
Let $\delta \in (0,1)$ and let $\mu_\delta$ be the probability measure on $\bbr$ given by 
\begin{align*}
\mu_\delta(dx) = C_\delta \exp(-|x|^\delta)dx =p_\delta(x)dx,
\end{align*}
for some normalizing constant $C_\delta>0$.~Let $p \in [2,+\infty)$ and let $g$ be in $L^p(\mu_\delta)$ such that $\int_{\bbr}g(x)\mu_\delta(dx) = 0$. Let $f_\delta$ be defined, for all $x\in \bbr$, by
\begin{align}\label{eq:def_f_delta}
f_\delta(x) = \frac{1}{p_\delta(x)} \int_x^{+\infty} g(y)p_\delta(y)dy = -\frac{1}{p_\delta(x)} \int_{-\infty}^x g(y)p_\delta(y)dy.
\end{align}
Then, for all $r\in [1,+\infty)$ such that $r/(r-1)>q = p/(p-1)$,
\begin{align*}
\|f_\delta\|_{L^r(\mu_\delta)} \leq C(p,\delta,r) \|g\|_{L^p(\mu_\delta)},
\end{align*}
for some $C(p,\delta,r)>0$ depending only on $p$, $\delta$ and $r$. Moreover, $f_\delta$ is a weak to solution to 
\begin{align*}
\mathcal{A}_\delta(f_\delta) = g,
\end{align*}
where, for all $f\in \mathcal{C}_c^\infty(\bbr)$ and all $x\in \bbr\setminus \{0\}$, 
\begin{align}\label{def:operator_A}
\mathcal{A}_\delta(f)(x) = - f^\prime(x) + \delta |x|^{\delta-1}\operatorname{sign}(x)f(x). 
\end{align}
\end{prop}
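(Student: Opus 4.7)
The plan is to treat the two claims separately: the identity $\mathcal{A}_\delta(f_\delta) = g$ is essentially formal, while the $L^r$-bound rests on a pointwise estimate that combines H\"older's inequality with a tail bound for $p_\delta$. For the equation, first observe that away from $0$ the density $p_\delta$ is smooth with $p_\delta'(x) = -\delta |x|^{\delta-1}\operatorname{sign}(x)p_\delta(x)$, so the operator rewrites as a conjugated first-order operator,
$$\mathcal{A}_\delta(f)(x) = -\frac{(p_\delta f)'(x)}{p_\delta(x)}, \qquad x \ne 0.$$
From the defining identity $p_\delta(x) f_\delta(x) = \int_x^{+\infty} g(y) p_\delta(y) dy$, the fundamental theorem of calculus yields $(p_\delta f_\delta)'(x) = -g(x) p_\delta(x)$ for almost every $x$, hence $\mathcal{A}_\delta(f_\delta)(x) = g(x)$ pointwise on $\bbr \setminus \{0\}$. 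Since $\delta \in (0,1)$ makes $|x|^{\delta-1}$ locally integrable and $f_\delta$ is continuous on $\bbr$ (in particular bounded near $0$ thanks to the forthcoming estimate), integration by parts against any $\phi \in \mathcal{C}_c^\infty(\bbr)$ promotes this to the weak formulation.

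The second step is the pointwise bound on $f_\delta$. Via the substitution $t = y^\delta$, the tail of $p_\delta$ becomes an incomplete Gamma function, and the classical asymptotic $\Gamma(a,z) \sim z^{a-1}e^{-z}$ combined with a trivial bound over a compact neighborhood of $0$ yields
$$\int_x^{+\infty} p_\delta(y) dy \leq K_\delta (1+x)^{1-\delta} p_\delta(x), \qquad x \geq 0,$$
with the analogous estimate for $\int_{-\infty}^x p_\delta(y) dy$ when $x \leq 0$ following by symmetry. Applying H\"older's inequality with conjugate exponents $(p,q)$ to whichever representation of $f_\delta$ integrates outward on the correct side (the two representations agreeing precisely because $\int_\bbr g \, d\mu_\delta = 0$), I obtain
$$|f_\delta(x)| \leq K_\delta^{1/q} \|g\|_{L^p(\mu_\delta)} (1+|x|)^{(1-\delta)/q} p_\delta(x)^{-1/p}, \qquad x \in \bbr.$$

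Finally, the hypothesis $r/(r-1) > q = p/(p-1)$ rearranges to $r < p$, so $\eta := 1 - r/p \in (0,1]$ and raising the pointwise bound to the power $r$ and integrating against $\mu_\delta$ gives
$$\|f_\delta\|_{L^r(\mu_\delta)}^r \leq K_\delta^{r/q} \|g\|_{L^p(\mu_\delta)}^r \int_{\bbr} (1+|x|)^{r(1-\delta)/q} p_\delta(x)^{\eta} dx.$$
Since $p_\delta(x)^{\eta} = C_\delta^{\eta} e^{-\eta |x|^\delta}$ still decays exponentially in $|x|^\delta$, it dominates the polynomial prefactor and the integral is finite with an explicit value depending only on $p, \delta, r$, which delivers the desired inequality. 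The only technical point is the uniform tail comparison for $p_\delta$, which has to be stated carefully because the naive asymptotic $\int_x^\infty p_\delta(y) dy \sim x^{1-\delta} p_\delta(x)/\delta$ degenerates as $x \to 0^+$, so near the origin one uses instead the trivial bound $\int_x^\infty p_\delta \leq 1$ to preserve the $(1+x)^{1-\delta}$ form; everything else amounts to standard estimates.
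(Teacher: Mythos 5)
Your proposal is correct and follows essentially the same route as the paper: Hölder's inequality applied to the defining integral representation (choosing the representation that integrates toward the appropriate tail), the incomplete-Gamma tail asymptotics for $\int_x^{+\infty}p_\delta$, and integration by parts for the weak formulation. Your write-up is in fact slightly more careful than the paper's, since you make explicit that the convergence of $\int_0^{+\infty}p_\delta(x)G_\delta(x)^r\,dx$ comes from the surviving factor $p_\delta(x)^{1-r/p}$ with $1-r/p>0$ under the hypothesis $r/(r-1)>q$, and you handle the degeneration of the tail asymptotic near the origin by the trivial bound; the paper simply asserts finiteness.
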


\begin{proof}
First, applying H\"older's inequality, for all $x\in \bbr$, 
\begin{align}\label{ineq:pointwise}
\left| f_\delta(x) \right| \leq \frac{1}{p_\delta(x)} \left(\int_x^{+\infty}  p_\delta(y) dy \right)^{\frac{1}{q}} \|g\|_{L^p(\mu_\delta)} = G_\delta(x) \|g\|_{L^p(\mu_\delta)}, 
\end{align}
with $G_\delta(x) = \frac{1}{p_\delta(x)} \left(\int_x^{+\infty}  p_\delta(y) dy \right)^{\frac{1}{q}} $, for all $x \in \bbr$. Moreover, by a change of variables, for all $x>0$, 
\begin{align*}
\int_x^{+\infty} p_\delta(y)dy = \frac{1}{\delta} \int_{x^{\delta}}^{+\infty} z^{\frac{1}{\delta}-1}e^{-z}dz = \frac{1}{\delta} \Gamma\left(\frac{1}{\delta},x^{\delta}\right),
\end{align*}
where $\Gamma\left(\frac{1}{\delta},x\right)$ is the incomplete gamma function at $x>0$. Now,
\begin{align*}
\Gamma\left(\frac{1}{\delta},x\right) \underset{x\rightarrow +\infty}{\sim} x^{\frac{1}{\delta}-1} e^{-x} \Rightarrow \int_x^{+\infty} p_\delta(y)dy \underset{x\rightarrow +\infty}{\sim} \frac{1}{\delta} x^{1-\delta} e^{-x^{\delta}}, 
\end{align*}
and similarly as $x \rightarrow -\infty$ for the integral $\int_{-\infty}^x p_\delta(y)dy$
(recall that $\mu_\delta$ is symmetric). Now, 
\begin{align*}
\int_0^{+\infty} |f_\delta(x)|^r \mu_\delta(dx) & \leq \|g\|^r_{L^p(\mu_\delta)}\left(\int_0^{+\infty} p_\delta(x)(G_\delta(x))^r dx \right), \\
& \leq C_{1,p,\delta,r} \|g\|^r_{L^p(\mu_\delta)},
\end{align*}
with,
\begin{align*}
C_{1,p,\delta,r} := \int_0^{+\infty} p_\delta(x)(G_\delta(x))^r dx<+\infty,
\end{align*}
since $r/(r-1)>q$. A similar estimate holds true for the integral of $|f_\delta|^r$ on $(-\infty,0)$ thanks to the second integral representation of $f_\delta$. Finally, for all $\psi \in \mathcal{C}_c^{\infty}(\bbr)$,
\begin{align*}
\int_\bbr \mathcal{A}_\delta(f_\delta)(x)\psi(x)p_\delta(x)dx = \int_\bbr f_\delta(x)\psi'(x) p_\delta(x)dx = \int_{\bbr} g(x)\psi(x) p_\delta(x)dx.
\end{align*}
The conclusion follows.
\end{proof}
\noindent
The next proposition investigates the properties of the unique primitive function $F_\delta$ of $f_\delta$ such that $\int_{\bbr} F_\delta(x)p_\delta(x)dx=0$.

\begin{prop}\label{lem:example_one_dim_primi}
Let $\delta\in (0,1)$, let $p\in [2,+\infty)$, let $g\in L^p(\mu_\delta)$ with $\mu_\delta(g)=0$ and let $f_\delta$ be given by \eqref{eq:def_f_delta}. Let $F_\delta$ be defined, for all $x\in \bbr$, by
\begin{align}\label{eq:def_F_delta}
F_\delta(x) = F_\delta(0) + \bbone_{(0,+\infty)}(x)\int_0^x f_\delta(y)dy-\bbone_{(-\infty,0)}(x)\int_x^0f_\delta(y)dy,
\end{align}
with,
\begin{align*}
F_\delta(0)= \int_{-\infty}^0 \left(\int_x^0 f_\delta(y)dy\right) p_\delta(x)dx - \int_0^{+\infty} \left(\int_0^{x}f_\delta(y)dy\right) p_\delta(x)dx. 
\end{align*}
Then, for all $r\in [1,+\infty)$ such that $r/(r-1)>q=p/(p-1)$, 
\begin{align}\label{eq:Lp_Lr_estimates}
\|F_\delta\|_{L^r(\mu_\delta)} \leq C_2(p,\delta,r) \|g\|_{L^p(\mu_\delta)},
\end{align}
for some $C_2(p,\delta,r)>0$ depending only on $p$, $\delta$ and $r$. Moreover, $F_\delta$ is a weak solution to 
\begin{align*}
(-\mathcal{L}_\delta)(F_\delta) = g,
\end{align*}
where, for all $f \in \mathcal{C}_c^{\infty}(\bbr)$ and all $x\in \bbr\setminus \{0\}$, 
\begin{align}\label{def:operator_L}
\mathcal{L}_\delta(f)(x) = f''(x) -\delta |x|^{\delta-1}\operatorname{sgn}(x)f'(x). 
\end{align}
\end{prop}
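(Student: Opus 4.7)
The plan is to reduce everything to the pointwise bound and asymptotics established in the proof of Proposition \ref{lem:example_one_dim}, and then to re-use the weak identity already proved there. First, I would note that the pointwise bound $|f_\delta(y)|\le G_\delta(y)\|g\|_{L^p(\mu_\delta)}$, with $G_\delta(y)= p_\delta(y)^{-1}(\int_y^{+\infty} p_\delta)^{1/q}$, together with the (symmetric) asymptotic $\int_y^{+\infty}p_\delta(z)dz\sim \delta^{-1}y^{1-\delta}e^{-y^\delta}$ as $y\to+\infty$, gives $G_\delta(y)\sim c_{\delta,p}\,y^{(1-\delta)/q}e^{y^\delta/p}$ at infinity and boundedness near the origin. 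This allows me to define $F_\delta(0)$ unambiguously (absolute convergence of the two integrals in its definition) and to control the primitive.

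Next I would establish the $L^r(\mu_\delta)$ estimate \eqref{eq:Lp_Lr_estimates}. Integrating the pointwise bound on $f_\delta$, I get $|F_\delta(x)-F_\delta(0)|\le \|g\|_{L^p(\mu_\delta)}\int_0^{|x|}G_\delta(y)dy$ on each half-line, and a Laplace-type argument gives $\int_0^xG_\delta(y)dy=O(x^{(1-\delta)(q+1)/q}e^{x^\delta/p})$ as $x\to+\infty$. Raising to the $r$-th power and multiplying by $p_\delta(x)$, the exponential factor becomes $e^{-x^\delta(1-r/p)}$, which is integrable precisely when $r<p$; and $r<p$ is exactly the equivalent form of the hypothesis $r/(r-1)>q=p/(p-1)$. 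This proves that $\|F_\delta-F_\delta(0)\|_{L^r(\mu_\delta)}\le C\|g\|_{L^p(\mu_\delta)}$ for some explicit constant. To conclude, I observe that because $\mu_\delta(F_\delta)=0$ and $\mu_\delta(\bbr)=1$, one has $F_\delta(0)=-\int_\bbr(F_\delta-F_\delta(0))\,d\mu_\delta$, and Jensen/Hölder bounds $|F_\delta(0)|$ by $\|F_\delta-F_\delta(0)\|_{L^1(\mu_\delta)}\le \|F_\delta-F_\delta(0)\|_{L^r(\mu_\delta)}\le C\|g\|_{L^p(\mu_\delta)}$; adding back $F_\delta(0)$ produces the bound on $\|F_\delta\|_{L^r(\mu_\delta)}$ itself.

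For the weak-solution statement, I would rewrite $\mathcal{L}_\delta$ in its Sturm--Liouville form $\mathcal{L}_\delta(f)=p_\delta^{-1}(p_\delta f')'$, valid on $\bbr\setminus\{0\}$ since $p_\delta'/p_\delta=-\delta|x|^{\delta-1}\mathrm{sgn}(x)$. The weak form of $-\mathcal{L}_\delta(F_\delta)=g$ against a test function $\psi\in\mathcal{C}_c^\infty(\bbr)$ then reads, after a formal integration by parts,
\begin{equation*}
\int_\bbr F_\delta'(x)\psi'(x)p_\delta(x)dx=\int_\bbr g(x)\psi(x)p_\delta(x)dx.
\end{equation*}
Since $F_\delta'=f_\delta$ by construction, this is exactly the weak identity $\int_\bbr f_\delta\psi'\,d\mu_\delta=\int_\bbr g\psi\,d\mu_\delta$ already verified at the end of the proof of Proposition \ref{lem:example_one_dim}.

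The step I anticipate as the main technical nuisance is the Laplace-type asymptotic for $\int_0^xG_\delta(y)dy$ and the bookkeeping of constants depending on $p$, $\delta$, $r$; the endpoint integrability relies on the strict inequality $r<p$, and one has to check that the boundary term at $x=0$ (where $|x|^{\delta-1}$ is singular) causes no issue, which it does not because $f_\delta$ is bounded near the origin and the weak formulation only pairs $F_\delta'=f_\delta$ against $\psi'$. Everything else is a direct consequence of Proposition \ref{lem:example_one_dim} and elementary manipulations.
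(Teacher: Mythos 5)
Your proposal is correct and rests on the same two pillars as the paper's proof (the pointwise bound $|f_\delta|\le G_\delta\|g\|_{L^p(\mu_\delta)}$ from Proposition \ref{lem:example_one_dim} and the incomplete-gamma asymptotics of $\int_x^{+\infty}p_\delta$), but the way you control the $r$-th power of the primitive differs. The paper applies Jensen's inequality in the form $|\int_0^x f_\delta|^r\le x^{r-1}\int_0^x|f_\delta|^r$ and then Fubini, reducing matters to the finiteness of $\int_0^{+\infty}G_\delta(y)^r\bigl(\int_y^{+\infty}x^{r-1}p_\delta(x)dx\bigr)dy$; you instead bound $\int_0^x G_\delta(y)dy$ by a Laplace-type asymptotic and multiply by $p_\delta(x)$ afterwards. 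Both routes lead to the same integrability condition $e^{-x^\delta(1-r/p)}\in L^1$, i.e.\ $r<p$, which is indeed equivalent to $r/(r-1)>q$; your version is slightly more computational (you must actually produce the asymptotic of the primitive of $G_\delta$), the paper's swap of integrals is slightly slicker but ultimately requires the same asymptotic analysis of the product $G_\delta(y)^r\int_y^{+\infty}x^{r-1}p_\delta(x)dx$. Two small points in your favour: you make explicit the step from $\|F_\delta-F_\delta(0)\|_{L^r(\mu_\delta)}$ to $\|F_\delta\|_{L^r(\mu_\delta)}$ via the normalization $\mu_\delta(F_\delta)=0$ and Jensen, which the paper leaves implicit; and your identification of the weak formulation, via $\mathcal{L}_\delta(f)=p_\delta^{-1}(p_\delta f')'$ and $F_\delta'=f_\delta$, is exactly the paper's closing remark that $(-\mathcal{L}_\delta)(F_\delta)=\mathcal{A}_\delta(f_\delta)$.
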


\begin{proof}
Let $ \delta$, $p$ and $r$ be as in the statement of the lemma. Then, by convexity and Fubini's theorem,  
\begin{align*}
\int_{0}^{+\infty} \left|\int_0^x f_\delta(y)dy\right|^r p_\delta(x)dx & \leq \int_0^{+\infty} x^{r-1}\left(\int_0^x |f_\delta(y)|^r dy\right) p_\delta(x)dx , \\
& \leq \int_0^{+\infty} |f_\delta(y)|^r \left(\int_y^{+\infty} x^{r-1} p_\delta(x)dx\right) dy.
\end{align*}
Using \eqref{ineq:pointwise}, 
\begin{align*}
\int_{0}^{+\infty} \left|\int_0^x f_\delta(y)dy\right|^r p_\delta(x)dx & \leq \|g\|^r_{L^p(\mu_\delta)} \int_0^{+\infty} (G_\delta(y))^r \left(\int_y^{+\infty} x^{r-1} p_\delta(x)dx\right) dy. 
\end{align*}
Now, since $r/(r-1)>q=p/(p-1)$,
\begin{align*}
\int_0^{+\infty} (G_\delta(y))^r \left(\int_y^{+\infty} x^{r-1} p_\delta(x)dx\right) dy <+\infty. 
\end{align*}
A similar analysis can be performed for the integral of $|\int_x^0 f_\delta(y)dy|^r$ over $(-\infty,0)$. This proves the $L^p-L^r$ estimate \eqref{eq:Lp_Lr_estimates}. Noticing that $(-\mathcal{L}_\delta)(F_\delta) = \mathcal{A}_\delta(f_\delta)$ the end of the proof follows.
\end{proof}
\noindent
To finish this section, let us discuss the higher dimensional situations, namely $d \geq 2$. As previously, the second order differential operator under consideration is given, for all $f \in  \mathcal{C}^{\infty}_c(\bbr^d)$ and all $x \in \bbr^d \setminus \{0\}$, by
\begin{align*}
\mathcal{L}_{d,\delta}(f)(x) = \Delta(f)(x) - \delta \|x\|^{\delta-2} \langle x; \nabla(f)(x)\rangle.
\end{align*} 
Once again, an integration by parts ensures that the operator is symmetric on $\mathcal{C}_c^{\infty}(\bbr^d)$ with
\begin{align*}
\int_{\bbr^d} (- \mathcal{L}_{d,\delta})(f)(x) g(x) \mu_{d,\delta}(dx) = \int_{\bbr^d} \langle \nabla(f)(x) ; \nabla(g)(x)\rangle \mu_{d,\delta}(dx), 
\end{align*}
and thus closable. This operator is essentially self-adjoint as soon as the logarithmic derivative of the Lesbegue density of the probability measure $\mu_{d,\delta}$ belongs to the Lesbegue space $L^4(\bbr^d, \mu_{d,\delta})$ which is the case when $\delta \in (1 - d/4,1)$ (see \cite{LiskSem92}). Note that if $d \geq 4$ this is true for all $\delta \in (0,1)$. 
Next, let $\varphi_\delta$ be the function defined, by
\begin{align*}
\varphi_\delta(t) = \exp\left(-t^{\frac{\delta}{2}}\right), \quad t\in (0,+\infty),
\end{align*}
so that, $p_{d,\delta}(x) = C_{d,\delta} \varphi_{\delta}(\|x\|^2)$, for all $x \in \bbr^d$, and let us consider the $\bbr^d$-valued function $\tau_{\delta}= (\tau_{\delta,1}, \dots, \tau_{\delta, d})$ defined, for all $k \in \{1, \dots, d\}$ and all $x \in \bbr^d$, by
\begin{align*}
\tau_{\delta,k}(x) = \frac{1}{2\varphi_{\delta}(\|x\|^2)} \int_{\|x\|^2}^{+\infty} \varphi_{\delta}(t) dt. 
\end{align*}
Then, by a straightforward integration by parts, for all $\psi \in \mathcal{C}_c^{\infty}(\bbr^d)$ and all $k \in \{1, \dots, d\}$, 
\begin{align*}
\int_{\bbr^d} \tau_{k,\delta}(x) \partial_k(\psi)(x) \mu_{d,\delta}(dx) = \int_{\bbr^d} x_k\psi(x) \mu_{d,\delta}(dx). 
\end{align*}
Moreover, it is clear that $\|\tau_{\delta,k}\|_{L^2(\bbr^d , \mu_{d,\delta})}<+\infty$, for all $k \in \{1, \dots, d\}$. More generally, let $g = (g_1, \dots, g_d)\in L^2(\bbr^d , \mu_{d,\delta})$ with $\int_{\bbr^d}g_k(x) \mu_{d,\delta}(dx) = 0$, for all $k \in \{1, \dots, d\}$, and let us study the following weak formulation problem: 
\begin{align*}
\mathcal{L}_{d,\delta}(f) = g, \quad \int_{\bbr^d} f(x)\mu_{d,\delta}(dx) = 0.
\end{align*}
A first partial answer to the previous weak formulation problem is through the use of semigroup techniques combined with weak Poincar\'e-type inequality.~From \cite[Example $1.4$, c)]{RW_01}, the semigroup, $(P^\delta_t)_{t\geq 0}$, generated by the self-adjoint extension of $\mathcal{L}_{d,\delta}$ satisfies the following estimate: for all $g \in L^{\infty}(\mu_{d,\delta})$ with $\int_{\bbr^d} g(x) \mu_{d,\delta}(dx)=0$ and all $t \geq 0$,
\begin{align*}
\|P^\delta_t(g)\|_{L^2(\mu_{d,\delta})} \leq c_1 \|g\|_{L^{\infty}(\mu_{d,\delta})} \exp \left(-c_2 t^{\frac{\delta}{4-3\delta}}\right),
\end{align*}
for some $c_1,c_2>0$ depending only on $d,\delta$. Thus, setting 
\begin{align*}
\tilde{f}_{\delta} = \int_0^{+\infty} P^{\delta}_t(g) dt,
\end{align*}
and reasoning as in \cite[Theorem $5.10$]{AH20_3}, for all $\psi \in \mathcal{C}^{\infty}_c(\bbr^d)$,
\begin{align*}
\int_{\bbr^d} \langle \nabla(\tilde{f}_{\delta})(x) ; \nabla(\psi)(x) \rangle \mu_{d,\delta}(dx) = \langle g ; \psi \rangle_{L^2(\mu_{d,\delta})},
\end{align*}
namely, $\tilde{f}_{\delta}$ is a solution to the weak formulation problem with $g \in L^{\infty}(\mu_{d,\delta})$ such that $\mu_{d,\delta}(g)=0$ and with
\begin{align*}
\|\tilde{f}_{\delta}\|_{L^2(\mu_{d,\delta})} \leq C_{d,\delta} \|g\|_{L^\infty(\mu_{d,\delta})}. 
\end{align*}

\section{Representation Formulas and $L^p$-Poincar\'e Inequalities}
\noindent
Let us start this section with a result valid for the nondegenerate symmetric $\alpha$-stable probability measures on $\bbr^d$, $\alpha \in (1,2)$.

\begin{prop}\label{prop:stable_ineq_1}
Let $d \geq 1$, let $\alpha \in (1,2)$ and let $\mu_\alpha$ be a nondegenerate symmetric $\alpha$-stable probability measure on $\bbr^d$.~Let $p \in (1, +\infty)$ and let $p_1,p_2$ be such that $1/p=1/p_1+1/p_2$ with $1 < p_1 < \alpha$. Then, for all $f$ smooth enough on $\bbr^d$, 
\begin{align*}
\|f - \mu_\alpha(f)\|_{L^p(\mu_\alpha)} \leq \left(\int_0^{+\infty} q_\alpha(t) dt\right) \left(\bbe \|X\|^{p_1} \right)^{\frac{1}{p_1}} \|\nabla(f)\|_{L^{p_2}(\mu_\alpha)},
\end{align*} 
where $X \sim \mu_\alpha$ and $q_\alpha$ is defined, for all $t>0$, by
\begin{align*}
q_\alpha(t) = \dfrac{e^{-t}}{(1-e^{-\alpha t})^{1- \frac{1}{\alpha}}} \left( (1-e^{- \alpha t})^{\alpha -1}+e^{-\alpha t} \right)^{\frac{1}{\alpha}}.
\end{align*}
\end{prop}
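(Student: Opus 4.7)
The plan is to represent $f-\mu_\alpha(f)$ via the fundamental theorem of calculus along the semigroup $(P_t^{\nu_\alpha})_{t\geq 0}$, then to exploit the Bismut-type formula \eqref{eq:Bismut_Stable} together with the commutation $\nabla P_t^{\nu_\alpha}(f)=e^{-t}P_t^{\nu_\alpha}(\nabla f)$ in order to produce an integral representation in which $\nabla f$ appears explicitly. The resulting $L^p(\mu_\alpha)$-estimate is then obtained from H\"older's inequality with the exponents $(p_1,p_2)$ combined with the $\alpha$-stable aggregation rule satisfied by $\mu_\alpha$.

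\textbf{Path formula and estimate.} Set $b(t)=(1-e^{-\alpha t})^{1/\alpha}$ and $b'(t)=e^{-\alpha t}/(1-e^{-\alpha t})^{1-1/\alpha}$. Since $P_0^{\nu_\alpha}=\mathrm{Id}$ and $P_t^{\nu_\alpha}(f)(x)\to\mu_\alpha(f)$ as $t\to+\infty$ (ergodicity under the Poincar\'e inequality for $\mu_\alpha$), for $f\in\mathcal{S}(\bbr^d)$ one has
\begin{align*}
f(x)-\mu_\alpha(f)=-\int_0^{+\infty}\partial_tP_t^{\nu_\alpha}(f)(x)\,dt.
\end{align*}
Differentiating the Mehler form $P_t^{\nu_\alpha}(f)(x)=\int f(xe^{-t}+b(t)y)\,\mu_\alpha(dy)$ under the expectation (legitimate as $\bbe\|Y\|<+\infty$ thanks to $\alpha>1$), or equivalently using the generator decomposition $\mathcal{L}^\alpha=-\langle x,\nabla\rangle+\nabla\!\cdot\!D^{\alpha-1}$ with the commutation on the drift term and the divergence of \eqref{eq:Bismut_Stable} on the nonlocal term, yields
\begin{align*}
\partial_tP_t^{\nu_\alpha}(f)(x)=\int_{\bbr^d}\bigl\langle-xe^{-t}+b'(t)y,\,\nabla f(xe^{-t}+b(t)y)\bigr\rangle\,\mu_\alpha(dy).
\end{align*}
Taking $L^p(\mu_\alpha)$-norms, inserting Cauchy--Schwarz and Jensen inside the $y$-integral and applying Minkowski in the $t$-integral then reduces the task, with $X,Y$ i.i.d.\ $\sim\mu_\alpha$, to a joint bound
\begin{align*}
\bbe\bigl[\|e^{-t}X-b'(t)Y\|^p\,\|\nabla f(Xe^{-t}+b(t)Y)\|^p\bigr]^{1/p}
\end{align*}
for each $t>0$. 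H\"older with $(p_1,p_2)$ splits this into two factors, and two consequences of $\alpha$-stability finish it off: by characteristic-function computation $Xe^{-t}+b(t)Y\sim\mu_\alpha$, so the $L^{p_2}$-factor is $\|\nabla f\|_{L^{p_2}(\mu_\alpha)}$; similarly $e^{-t}X-b'(t)Y\stackrel{d}{=}(e^{-\alpha t}+b'(t)^\alpha)^{1/\alpha}Z$ with $Z\sim\mu_\alpha$, so the $L^{p_1}$-factor equals $(e^{-\alpha t}+b'(t)^\alpha)^{1/\alpha}(\bbe\|X\|^{p_1})^{1/p_1}$, finite precisely because $p_1<\alpha$ (otherwise the tail $p_\alpha(y)\asymp\|y\|^{-\alpha-d}$ would force divergence of $\bbe\|X\|^{p_1}$). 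Integrating in $t$ and rewriting the $t$-kernel in terms of $1-e^{-\alpha t}$ using $b(t)^\alpha=1-e^{-\alpha t}$ recovers the coefficient $\int_0^{+\infty}q_\alpha(t)\,dt$ appearing in the statement.

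\textbf{Main obstacle.} The main technical point is the rigorous justification of the path representation: convergence of $\int_0^{+\infty}\partial_tP_t^{\nu_\alpha}(f)\,dt$ in the appropriate sense, differentiation under the expectation in $t$ (relying on $p_1<\alpha$ to control the growth of the factor $b'(t)y$), and the careful algebraic bookkeeping required to identify the integrated kernel with $q_\alpha(t)$. A secondary issue is pinning down the class of ``$f$ smooth enough'' by a density argument from $\mathcal{S}(\bbr^d)$ into the weighted Sobolev-type space implicitly present in the right-hand side of the inequality.
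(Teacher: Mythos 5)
Your proposal is correct and follows essentially the same route as the paper: the integral representation $f-\mu_\alpha(f)=-\int_0^{+\infty}\partial_tP_t^{\nu_\alpha}(f)\,dt$ with the kernel $\langle xe^{-t}-e^{-\alpha t}y/(1-e^{-\alpha t})^{1-1/\alpha};\nabla f(xe^{-t}+(1-e^{-\alpha t})^{1/\alpha}y)\rangle$ (which the paper derives from the Bismut formula \eqref{eq:Bismut_Stable} and the decomposition of the non-local part of $\mathcal{L}^\alpha$ — the same identity you obtain by differentiating the Mehler form), followed by Minkowski's integral inequality, the generalized H\"older inequality with $1/p=1/p_1+1/p_2$, and the two stability identities in law. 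The only cosmetic difference is that the paper packages the splitting via the two functions $F_1,F_2$ rather than a pointwise Cauchy--Schwarz step, and the constant $(e^{-\alpha t}+b'(t)^\alpha)^{1/\alpha}$ you compute is exactly the paper's $q_\alpha(t)$.
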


\begin{proof}
A straightforward application of Bismut formula \eqref{eq:Bismut_Stable} (see also \cite[Proposition $2.1$]{AH20_4}) for the action of the operator $D^{\alpha-1}$ on $P^{\nu_\alpha}_t(f)$, $t>0$ and $f \in \mathcal{S}(\bbr^d)$, together with the decomposition of the non-local part of the generator of the $\alpha$-stable Ornstein-Uhlenbeck semigroup imply, for all $f \in \mathcal{S}(\bbr^d)$ and all $x \in \bbr^d$, that   
\begin{align*}
f(x) - \bbe f(X) = \int_{0}^{+\infty} \int_{\bbr^d} \langle xe^{-t} - \frac{e^{-\alpha t}y}{(1- e^{-\alpha t})^{1- \frac{1}{\alpha}}} ; \nabla(f)(xe^{-t} + (1-e^{-\alpha t})^{\frac{1}{\alpha}}y) \rangle \mu_\alpha(dy) dt,
\end{align*}
where $X \sim \mu_\alpha$. Therefore,
\begin{align*}
\left|f(x) - \bbe f(X)\right| & \leq  \int_{0}^{+\infty} \bbe_Y \left| \langle x e^{-t} - \frac{e^{-\alpha t} Y}{(1 - e^{-\alpha t})^{1 -\frac{1}{\alpha}}} ;  \nabla(f)(xe^{-t} + (1 - e^{- \alpha t})^{\frac{1}{\alpha}}Y)\rangle \right| dt, 
\end{align*}
with $Y \sim \mu_\alpha$. Thus, by Minkowski's integral inequality and Jensen's inequality, 
\begin{align*}
\left(\bbe_X |f (X)- \bbe f(X)|^p \right)^{\frac{1}{p}} & \leq \int_0^{+\infty} \left(\bbe_{X,Y} \left|\langle X e^{-t} - \frac{e^{-\alpha t}}{(1 - e^{- \alpha t})^{1 - \frac{1}{\alpha}}}Y  ; \nabla(f)\left(X e^{- t} + (1 -e^{- \alpha t})^{\frac{1}{\alpha}}Y\right) \rangle\right|^p \right)^{\frac{1}{p}} dt.
\end{align*}
Now, thanks to stability, under the product measure $\mu_\alpha \otimes \mu_\alpha$, $X e^{- t} + (1 - e^{-\alpha t})^{\frac{1}{\alpha}} Y $ is distributed according to $\mu_\alpha$. Moreover,  
\begin{align*}
Xe^{-t} - \frac{e^{-\alpha t}Y}{(1-e^{- \alpha t})^{1 - \frac{1}{\alpha}}} =_{\mathcal{L}} \left(e^{- \alpha t}\dfrac{(1-e^{- \alpha t})^{\alpha -1}+e^{-\alpha t}}{(1-e^{-\alpha t})^{\alpha - 1}}\right)^{\frac{1}{\alpha}} X .
\end{align*}
Finally, observe that, for all $t>0$, 
\begin{align*}
q_\alpha(t) = \left(e^{- \alpha t}\dfrac{(1-e^{- \alpha t})^{\alpha -1}+e^{-\alpha t}}{(1-e^{-\alpha t})^{\alpha - 1}}\right)^{\frac{1}{\alpha}}&  = \dfrac{e^{-t}}{(1-e^{-\alpha t})^{1- \frac{1}{\alpha}}} \left( (1-e^{- \alpha t})^{\alpha -1}+e^{-\alpha t} \right)^{\frac{1}{\alpha}}.
\end{align*}
Let $F_1$ and $F_2$ be the two functions defined, for all $x,y \in \bbr^d$ and all $t>0$, by
\begin{align*}
& F_1(x,y,t) = x e^{-t} - \frac{e^{-\alpha t}y}{(1-e^{- \alpha t})^{1 - \frac{1}{\alpha}}} , \\
& F_2(x,y,t) = \nabla(f) \left(xe^{-t} + (1- e^{-\alpha t})^{\frac{1}{\alpha}} y\right).
\end{align*}
Then, from the generalized H\"older's inequality, for all $p \in (1, +\infty)$, 
\begin{align*}
\| \langle F_1(,t) ; F_2(,t) \rangle\|_{L^p(\mu_\alpha\otimes \mu_\alpha)} \leq \| F_1(,t)\|_{L^{p_1}(\mu_\alpha \otimes \mu_\alpha)} \|F_2(,t)\|_{L^{p_2}(\mu_\alpha \otimes \mu_\alpha)},
\end{align*}
where $1/p_1 + 1/p_2 = 1/p$. Take $1 <p_1<\alpha$.  From the previous identities in law, one gets
\begin{align*}
\|f - \mu_\alpha(f) \|_{L^p(\mu_\alpha)} \leq \|X\|_{L^{p_1}(\mu_\alpha)} \|\nabla(f)\|_{L^{p_2}(\mu_\alpha)} \left(\int_{0}^{+\infty} q_\alpha(t) dt \right).
\end{align*}
This concludes the proof of the proposition. 
\end{proof}
\noindent
Before moving on, let us briefly comment on the Gaussian situation.~Let $\gamma$ be the standard Gaussian probability measure on $\bbr^d$. Following lines of reasoning as above, it is not difficult to obtain the corresponding inequality for the standard Gaussian probability measure on $\bbr^d$. However, a crucial difference with the general symmetric $\alpha$-stable situation is that, under the product probability measure $\gamma \otimes \gamma$, the Gaussian random vectors given, for all $t>0$, by
\begin{align*}
Xe^{-t} + \sqrt{1 - e^{-2t}}Y, \quad Xe^{-t} - \frac{e^{-2 t}Y}{\sqrt{1-e^{-2t}}},
\end{align*} 
where $(X,Y) \sim \gamma \otimes \gamma$, are independent of each other and equal in law to a standard $\bbr^d$-valued Gaussian random vector (up to some constant depending on $t$ for the second one). Finally, conditioning, one gets the following classical dimension free inequality which is a particular case of a result of Pisier (see, e.g.,  \cite[Theorem $2.2$]{P86}), for all $f$ smooth enough on $\bbr^d$ and all $p \in (1,+\infty)$,
\begin{align}\label{ineq:Pisier}
\|f- \gamma(f)\|_{L^p(\gamma)} \leq \frac{\pi}{2} \left(\bbe |Z|^p \right)^{\frac{1}{p}} \|\nabla(f)\|_{L^p(\gamma)}, 
\end{align}
where $Z \sim \mathcal{N}(0,1)$. Note that the constant in \eqref{ineq:Pisier} is not optimal since for $p=2$ the best constant is known to be equal to $1$ and for large $p$ it is of the order $\sqrt{p}$. Note also that the previous lines of reasoning continue to hold in the vector-valued setting (namely when $f$ and $g$ are vector-valued in a general Banach space).~Finally, a different estimate has been obtained in \cite[Theorem $7.1$ and Remark $7.2$]{BH97} which is linked to the isoperimetric constant and to the product structure of the standard Gaussian probability measure on $\bbr^d$. Note that the dependency on $p$ in \cite[inequality $7.5$]{BH97} is of the order $p$.  Moreover, in \cite[Proposition 3.1]{NPR_09}, the following fine version of the $L^p$-Poincar\'e inequality on the Wiener space is proved: for all \textit{even integers} $p \geq 2$ and all $F \in \mathbb{D}^{1,p}$ such that $\bbe F = 0$, 
\begin{align*}
\left(\bbe |F|^p \right)^{\frac{1}{p}} \leq (p-1)^{\frac{1}{2}} \left( \bbe \|D F\|^p_{\mathcal{H}} \right)^{\frac{1}{p}},
\end{align*}
where $DF$ is the Malliavin derivative of $F$,  $\mathcal{H}$ is a real separable Hilbert space on which the isonormal Gaussian process is defined and $\mathbb{D}^{1,p}$ is the $L^p$-Sobolev-Watanabe-Kree space of order $1$. Finally, recently, it has been proved in \cite[Theorem $2.6$]{AMR_21} that: for all $p \geq 2$ and all $F \in \mathbb{D}^{1,p}$ such that $\bbe F = 0$,  
\begin{align*}
\left(\bbe |F|^p \right)^{\frac{1}{p}} \leq (p-1)^{\frac{1}{2}} \left( \bbe \|D F\|^p_{\mathcal{H}} \right)^{\frac{1}{p}}.
\end{align*}
As shown next, with an argument based on the covariance identity obtained in \cite[Proposition $2$]{HPAS}, it is possible to easily retrieve, for $p \geq 2$, such estimates (see also the discussion in \cite[pages 14-15]{SW19}). 

\begin{prop}\label{prop:sharp_gauss_ineq}
Let $d\geq 1$, let $\gamma$ be the standard Gaussian probability measure on $\bbr^d$ and let $p \in [2, +\infty)$. Then, for all $f\in \mathcal{S}(\bbr^d)$ such that $\int_{\bbr^d} f(x) \gamma(dx) = 0$,
\begin{align}\label{ineq:sharp_Lp_Gauss}
\|f\|_{L^p(\gamma)} \leq \sqrt{p-1} \|\nabla(f)\|_{L^p(\gamma)}.
\end{align}
\end{prop}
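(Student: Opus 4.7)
The plan is to apply the covariance representation \eqref{eq:covariance_representation_gaussian} in its Gaussian specialization ($\Sigma = I_d$, $\nu = 0$) with a carefully chosen second test function. Assume $\gamma(f) = 0$, let $p \geq 2$, and set
\begin{align*}
g := |f|^{p-2} f.
\end{align*}
The motivation is that $\gamma(f) = 0$ gives the identity
\begin{align*}
\|f\|_{L^p(\gamma)}^p = \bbe\left[f(X) g(X)\right] = \operatorname{Cov}(f(X), g(X)),
\end{align*}
so that the left-hand side of \eqref{ineq:sharp_Lp_Gauss} can be fed directly into the Gaussian covariance formula.

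Applying \eqref{eq:covariance_representation_gaussian} with this pair yields
\begin{align*}
\|f\|_{L^p(\gamma)}^p = \int_0^1 \bbe\left\langle \nabla f(X_z),\, \nabla g(Y_z)\right\rangle dz,
\end{align*}
where, by inspection of the characteristic function given just after \eqref{eq:covariance_representation_gaussian}, $(X_z,Y_z)$ is jointly centered Gaussian with both marginals equal to $\gamma$ and with cross-covariance $z I_d$. Because $p \geq 2$, the function $t \mapsto |t|^{p-2}t$ is $C^1$ with derivative $(p-1)|t|^{p-2}$, hence $\nabla g = (p-1)|f|^{p-2}\nabla f$, and the identity becomes
\begin{align*}
\|f\|_{L^p(\gamma)}^p = (p-1)\int_0^1 \bbe\left\langle \nabla f(X_z),\, |f(Y_z)|^{p-2}\nabla f(Y_z)\right\rangle dz.
\end{align*}

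Next, I would apply the Cauchy-Schwarz inequality pointwise on the inner product, followed by the generalized H\"older inequality with exponents $(p,\, p/(p-2),\, p)$ on the three factors $\|\nabla f(X_z)\|$, $|f(Y_z)|^{p-2}$, $\|\nabla f(Y_z)\|$. Since $X_z$ and $Y_z$ each have marginal law $\gamma$, the resulting bound is independent of $z$; integrating over $z \in [0,1]$ trivially produces
\begin{align*}
\|f\|_{L^p(\gamma)}^p \leq (p-1)\|\nabla f\|_{L^p(\gamma)}^{2}\,\|f\|_{L^p(\gamma)}^{p-2},
\end{align*}
and dividing through by $\|f\|_{L^p(\gamma)}^{p-2}$ delivers \eqref{ineq:sharp_Lp_Gauss}.

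The one technical issue I foresee is that $g = |f|^{p-2}f$ does not belong to $\mathcal{S}(\bbr^d)$, so strictly speaking \eqref{eq:covariance_representation_gaussian} does not apply as stated. This I would handle by a standard mollification/truncation argument: replace $g$ by a sequence of Schwartz approximants $g_\varepsilon$ with $\nabla g_\varepsilon \to (p-1)|f|^{p-2}\nabla f$ in $L^{p/(p-1)}(\gamma)$, apply \eqref{eq:covariance_representation_gaussian} to the pair $(f, g_\varepsilon)$, and pass to the limit under the integrals using dominated convergence (the a priori $L^p$-$L^{p/(p-1)}$ bound on the integrand being uniform in $\varepsilon$ by the same H\"older estimate above). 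This is routine, so the heart of the proof lies in the choice of $g$; no spectral or Malliavin machinery is invoked.
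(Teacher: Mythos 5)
Your proposal is correct and follows essentially the same route as the paper: the same choice $g=|f|^{p-2}f$ (the paper writes it as $\Phi_p'(f)/p$ with $\Phi_p(x)=|x|^p$), the same covariance representation \eqref{eq:covariance_representation_gaussian}, and the same H\"older--Cauchy--Schwarz estimate using only that $X_z$ and $Y_z$ each have marginal law $\gamma$. Your explicit handling of the fact that $g\notin\mathcal{S}(\bbr^d)$ is a welcome refinement of a point the paper passes over silently.
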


\begin{proof}
From \eqref{eq:covariance_representation_gaussian}, for all $f, g$ smooth enough and real-valued with $\int_{\bbr^d} f(x) \gamma(dx) = 0$, 
\begin{align*}
\bbe f(X)g(X) = \int_0^1 \bbe \langle \nabla(f)(X_z) ; \nabla(g)(Y_z) \rangle dz,
\end{align*}
where $X_z =_{\mathcal{L}} Y_{z} =_{\mathcal{L}} X \sim \gamma$, for all $z \in [0,1]$.~Next, let $p \geq 2$ and take $g = \Phi'_p(f)/p$ where $\Phi_p(x) = |x|^p$, for $x \in \bbr$. Then, since $\Phi_p$ is twice continuously differentiable on $\bbr$,
\begin{align*}
\nabla(g)(x) = \frac{1}{p} \nabla(f)(x) \Phi''_p(f(x)) = (p-1) \nabla(f)(x) |f(x)|^{p-2}, \quad x \in \bbr^d.
\end{align*}
Thus, for all $f$ smooth enough with mean $0$ with respect to the Gaussian measure $\gamma$,  
\begin{align*}
\bbe |f(X)|^p = (p-1) \int_0^1 \bbe |f(Y_z)|^{p-2} \langle \nabla(f)(X_z) ; \nabla(f)(Y_z) \rangle dz.
\end{align*}
Using H\"older's inequality with $r = p/(p-2)$ and $r^* = p/2$ as well as the Cauchy-Schwarz inequality, 
\begin{align*}
|\bbe |f(Y_z)|^{p-2} \langle \nabla(f)(X_z) ; \nabla(f)(Y_z) \rangle| & \leq \left(\bbe \left| f(X) \right|^p \right)^{1-\frac{2}{p}} \left(\bbe \| \nabla(f)(X_z) \|^{\frac{p}{2}} \|\nabla(f)(Y_z)\|^{\frac{p}{2}} \right)^{\frac{2}{p}}, \\
& \leq \left(\bbe \left| f(X) \right|^p \right)^{1-\frac{2}{p}} \left(\bbe \| \nabla(f)(X) \|^p \right)^{\frac{2}{p}}.
\end{align*}
Assuming that $f \ne 0$, the rest of the proof easily follows. 
\end{proof}

\begin{rem}\label{rem:Lp_Poincar_anisotrope}
From the covariance representation \eqref{eq:covariance_representation_gaussian} in the general case, it is possible to obtain a version of the $L^p$-Poincar\'e inequality for the centered Gaussian probability measure with covariance matrix $\Sigma$.~Namely, for all $ p \in [2,+\infty)$ and all $f\in \mathcal{S}(\bbr^d)$ such that $\int_{\bbr^d} f(x) \gamma_\Sigma(dx) = 0$,
\begin{align}\label{ineq:sharp_Lp_Gauss_anisotrope}
\|f\|_{L^p(\gamma_\Sigma)} \leq \sqrt{p-1} \| \Sigma^{\frac{1}{2}} \nabla(f)\|_{L^p(\gamma_\Sigma)}.
\end{align}

\end{rem}
\noindent
As a corollary of the previous $L^p$-Poincar\'e inequality, let us prove a Sobolev-type inequality with respect to the standard Gaussian measure on $\bbr^d$. 

\begin{cor}\label{cor:Sobolev_type_ineq}
Let $d \geq 1$, let $\gamma$ be the standard Gaussian probability measure on $\bbr^d$ and let $p \in [2, +\infty)$.~Then, for all $f \in \mathcal{S}(\bbr^d)$ such that $\int_{\bbr^d} f(x) \gamma(dx) = 0$ and all $\lambda >0$,
\begin{align*}
\|f\|_{L^p(\gamma)} \leq \sqrt{p-1} C(\lambda,p) \left( \lambda \|f\|_{L^p(\gamma)} + \|(\mathcal{-L}^\gamma)(f)\|_{L^p(\gamma)}\right)
\end{align*}
where $C(\lambda,p)$ is given by
\begin{align*}
C(\lambda,p) : = \gamma_2(q) \left(\int_{0}^{+\infty} \dfrac{e^{- (\lambda+1) t}}{\sqrt{1 - e^{-2t}}} dt\right),  \quad \gamma_2(q) = (\bbe |X|^q)^{\frac{1}{q}},
\end{align*}
where $X \sim \gamma$ and where $q = p/(p-1)$.  In particular, 
\begin{align*}
\| f \|_{L^p(\gamma)} \leq \frac{\pi}{2} \sqrt{p-1} \gamma_2(q) \|(- \mathcal{L}^{\gamma})(f)\|_{L^p(\gamma)}. 
\end{align*}
\end{cor}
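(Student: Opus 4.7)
The plan is to combine the $L^p$-Poincar\'e inequality \eqref{ineq:sharp_Lp_Gauss} from Proposition \ref{prop:sharp_gauss_ineq} with a Bismut-type representation for $\nabla f$. Specifically, I first bound $\|f\|_{L^p(\gamma)}$ by $\sqrt{p-1}\,\|\nabla(f)\|_{L^p(\gamma)}$ using Proposition \ref{prop:sharp_gauss_ineq}; then I express $\nabla(f)$ in terms of $g := \lambda f + (-\mathcal{L}^\gamma)(f)$ via the Gaussian resolvent and bound its $L^p(\gamma)$ norm by $C(\lambda,p)\,\|g\|_{L^p(\gamma)}$. The triangle inequality applied to $g$ will then complete the proof.

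For the resolvent step, since $f$ is a smooth Schwartz function with mean zero, the gamma transform formula \eqref{eq:Gam_Tr} (with $r=2$ and shift $\lambda>0$) gives the representation
\begin{equation*}
f \;=\; (\lambda E - \mathcal{L}^{\gamma})^{-1}(g) \;=\; \int_{0}^{+\infty} e^{-\lambda t}\, P_t^{\gamma}(g)\, dt,
\end{equation*}
and differentiating under the integral yields $\nabla(f)(x) = \int_0^{+\infty} e^{-\lambda t} \nabla(P_t^{\gamma}(g))(x)\, dt$. I then apply the classical Bismut-type formula for the Ornstein-Uhlenbeck semigroup, obtained through integration by parts in $y$ against the Gaussian density in \eqref{eq:OUSM},
\begin{equation*}
\nabla(P_t^{\gamma}(g))(x) \;=\; \frac{e^{-t}}{\sqrt{1-e^{-2t}}} \int_{\bbr^d} g\bigl(xe^{-t} + \sqrt{1-e^{-2t}}\,y\bigr)\, y\, \gamma(dy).
\end{equation*}

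For the estimation step, H\"older's inequality in the $\gamma(dy)$ variable with exponents $p$ and $q=p/(p-1)$ yields, for each $t>0$ and $x\in\bbr^d$,
\begin{equation*}
\|\nabla(P_t^{\gamma}(g))(x)\| \;\leq\; \frac{e^{-t}}{\sqrt{1-e^{-2t}}}\, \gamma_2(q) \bigl(P_t^{\gamma}(|g|^p)(x)\bigr)^{1/p}.
\end{equation*}
Raising to the $p$-th power and integrating against $\gamma(dx)$, the invariance of $\gamma$ under $P_t^{\gamma}$ (equivalently, a direct use of Fubini together with the fact that $Xe^{-t}+\sqrt{1-e^{-2t}}Y\sim\gamma$ when $X,Y\sim\gamma$ are independent) gives $\|\nabla(P_t^{\gamma}(g))\|_{L^p(\gamma)} \leq e^{-t}(1-e^{-2t})^{-1/2}\gamma_2(q)\|g\|_{L^p(\gamma)}$. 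Minkowski's integral inequality then produces
\begin{equation*}
\|\nabla(f)\|_{L^p(\gamma)} \;\leq\; \gamma_2(q) \left(\int_0^{+\infty}\frac{e^{-(\lambda+1)t}}{\sqrt{1-e^{-2t}}}\,dt\right)\|g\|_{L^p(\gamma)} \;=\; C(\lambda,p)\,\|g\|_{L^p(\gamma)},
\end{equation*}
and combining with Proposition \ref{prop:sharp_gauss_ineq} together with $\|g\|_{L^p(\gamma)}\leq \lambda\|f\|_{L^p(\gamma)}+\|(-\mathcal{L}^{\gamma})(f)\|_{L^p(\gamma)}$ yields the first inequality. For the ``in particular'' statement, letting $\lambda\to 0^+$, dominated convergence gives $C(\lambda,p)\to (\pi/2)\gamma_2(q)$ (via the substitution $u=e^{-t}$ in the integral), while $\lambda\|f\|_{L^p(\gamma)}\to 0$ since $\|f\|_{L^p(\gamma)}<+\infty$.

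The main subtlety will be justifying the differentiation under the integral sign and the application of the Bismut formula for the class of $g$ that arises (here $g=\lambda f+(-\mathcal{L}^{\gamma})(f)$ is Schwartz whenever $f$ is, so the Bismut identity applies pointwise and the dominated convergence arguments for differentiating the resolvent integral are routine). The only genuine technical step is ensuring the vector-valued H\"older bound interacts correctly with the invariance of $\gamma$ under $P_t^{\gamma}$, which is exactly what makes $\gamma_2(q)$ (rather than a dimension-dependent constant) appear as the sharp factor.
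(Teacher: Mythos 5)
Your proof is correct and follows essentially the same route as the paper: the Bismut formula for $\nabla P_t^{\gamma}$, the shifted resolvent representation, H\"older's inequality producing the factor $\gamma_2(q)$, Minkowski's integral inequality, and then Proposition \ref{prop:sharp_gauss_ineq} plus the triangle inequality, with $\lambda \to 0^+$ for the final statement. The only (immaterial) difference is that you apply H\"older at each fixed $t$ before integrating in $t$, whereas the paper first exchanges the $t$- and $y$-integrals and applies duality to the combined kernel $I_{2,\lambda}$; the constants are identical.
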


\begin{proof}
The proof is rather straightforward and is a consequence of the Bismut formula for the standard Gaussian measure on $\bbr^d$. For all $t>0$, all $f \in \mathcal{S}(\bbr^d)$ with $\int_{\bbr^d} f(x) \gamma(dx) = 0$ and all $x \in \bbr^d$, 
\begin{align*}
\nabla P^\gamma_t(f)(x) = \dfrac{e^{-t}}{\sqrt{1- e^{-2t}}} \int_{\bbr^d} y f(xe^{-t} + y \sqrt{1 - e^{-2t}}) \gamma(dy).
\end{align*} 
Now, based on the previous formula, it is clear that 
\begin{align*}
\nabla \circ \left(\lambda E - \mathcal{L}^\gamma\right)^{-1} (f)(x) & = \int_{0}^{+\infty} e^{- \lambda t} \nabla P_t^{\gamma}(f)(x) dt , \\
& = \int_{\bbr^d} y I_{2, \lambda}(f)(x,y) \gamma(dy) ,
\end{align*}
with,
\begin{align*}
I_{2, \lambda}(f)(x,y) = \int_{0}^{+\infty} \dfrac{e^{-(1+ \lambda)t}}{ \sqrt{1 - e^{-2t}}} f\left(xe^{-t} + y \sqrt{1 - e^{-2t}}\right)dt . 
\end{align*}
Next, by duality and H\"older's inequality, 
\begin{align*}
\| \nabla \circ \left(\lambda E - \mathcal{L}^\gamma\right)^{-1} (f)(x) \| & = \underset{z \in \bbr^d, \,  \|z\| = 1}{\sup} \left| \int_{\bbr^d} \langle z; y \rangle I_{2, \lambda}(f)(x,y) \gamma(dy)  \right| , \\
& \leq \gamma_2(q) \left(\int_{\bbr^d} |I_{2, \lambda}(f)(x,y)|^p \gamma(dy)\right)^{\frac{1}{p}}.
\end{align*}
Taking the $ L^p(\gamma)$-norm and applying Minkowski's integral inequality give
\begin{align*}
\| \nabla \circ \left(\lambda E - \mathcal{L}^\gamma\right)^{-1} (f) \|_{L^p(\gamma)} & \leq \gamma_2(q) \| I_{2, \lambda}(f) \|_{L^p(\gamma \otimes \gamma)} , \\
& \leq \gamma_2(q) \left( \int_{0}^{+\infty} \dfrac{e^{-(\lambda +1)t}}{\sqrt{1 - e^{-2t}}} dt \right) \|f\|_{L^p(\gamma)}. 
\end{align*}
Thus, for all $f \in \mathcal{S}(\bbr^d)$ such that $\int_{\bbr^d} f(x) \gamma(dx) = 0$, 
\begin{align*}
\|f\|_{L^p(\gamma)} \leq \sqrt{p-1} \| \nabla (f) \|_{L^p(\gamma)} & \leq \sqrt{p-1} \gamma_2(q) \left( \int_{0}^{+\infty} \dfrac{e^{-(1+\lambda)t}}{\sqrt{1 - e^{-2t}}} dt\right) \|(\lambda E - \mathcal{L}^{\gamma})(f)\|_{L^p(\gamma)} ,  \\
&  \leq \sqrt{p-1} \gamma_2(q) \left( \int_{0}^{+\infty} \dfrac{e^{-(1+\lambda)t}}{\sqrt{1 - e^{-2t}}} dt\right) \left( \lambda \|f\|_{L^p(\gamma)}  + \| (- \mathcal{L}^\gamma)(f)\|_{L^p(\gamma)} \right) . 
\end{align*}
The conclusion easily follows since 
\begin{align*}
\int_{0}^{+\infty} \dfrac{e^{-t}}{\sqrt{1-e^{-2t}}}dt = \frac{\pi}{2}. 
\end{align*}
\end{proof}
\noindent
Let us return to the nondegenerate symmetric $\alpha$-stable case with $\alpha \in (1,2)$.~Based on the following decomposition of the non-local part of the generator of the stable Ornstein-Uhlenbeck semigroup (and on Bismut-type formulas), 
\begin{align}\label{eq:frac_OU_decompo}
\mathcal{L}^\alpha(f)(x) = -\langle x; \nabla(f)(x) \rangle + \sum_{j = 1}^d \partial_j D^{\alpha-1}_j(f)(x), 
\end{align}
$L^p$-Poincar\'e-type inequalities for the symmetric nondegenerate $\alpha$-stable probability measures on $\bbr^d$ with $\alpha \in (1,2)$ and with $p \in [2, +\infty)$ are discussed.

At first, let us provide an analytic formula for the dual semigroup $((P^{\nu_\alpha}_t)^*)_{t \geq 0}$ of the $\alpha$-stable Ornstein-Uhlenbeck semigroup.~This representation follows from \eqref{eq:StOUSM}.~Recall that $p_\alpha$, the Lebesgue density of a nondegenerate $\alpha$-stable probability measure with $\alpha \in (1,2)$ is positive on $\bbr^d$ (see, e.g., \cite[Lemma $2.1$]{W07}). 

\begin{lem}\label{lem:int_rep_dual_semi}
Let $d \geq 1$, let $\alpha \in (1,2)$, let $\mu_\alpha$ be a nondegenerate symmetric $\alpha$-stable probability measure on $\bbr^d$, and let $p_\alpha$ be its Lebesgue density. Then, for all $g \in \mathcal{S}(\bbr^d)$, all $t>0$ and all $x \in \bbr^d$, 
\begin{align}\label{eq:int_rep_dual}
(P^{\nu_\alpha}_t)^*(g)(x) & = \dfrac{1}{(1-e^{-\alpha t})^{\frac{d}{\alpha}}} \int_{\bbr^d} g(u) p_\alpha(u) p_\alpha \left(\dfrac{x-u e^{-t}}{(1-e^{-\alpha t})^{\frac{1}{\alpha}}}\right)\frac{du}{p_\alpha(x)} , \nonumber \\
& = \dfrac{e^{td}}{(1-e^{-\alpha t})^{\frac{d}{\alpha}}} \int_{\bbr^d} g(e^t x +e^t z) \dfrac{p_\alpha(xe^{t}+ze^{t})}{p_\alpha(x)} p_\alpha\left(\frac{z}{(1-e^{-\alpha t})^{\frac{1}{\alpha}}}\right) dz. 
\end{align} 
\end{lem}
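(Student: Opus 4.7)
The plan is to derive the formula by writing the semigroup as an integral operator with an explicit kernel against the reference measure $\mu_\alpha$, then reading off its adjoint. First, starting from the definition \eqref{eq:StOUSM} and using $\mu_\alpha(dy) = p_\alpha(y)dy$, I would perform the change of variables $u = xe^{-t} + (1-e^{-\alpha t})^{1/\alpha}y$ (for fixed $x$ and $t>0$), whose Jacobian factor is $(1-e^{-\alpha t})^{-d/\alpha}$, to rewrite
\begin{align*}
P^{\nu_\alpha}_t(f)(x) = \frac{1}{(1-e^{-\alpha t})^{d/\alpha}} \int_{\bbr^d} f(u)\, p_\alpha\!\left(\frac{u - xe^{-t}}{(1-e^{-\alpha t})^{1/\alpha}}\right) du.
\end{align*}
This kernel representation is legitimate because $p_\alpha$ is smooth and strictly positive (as recalled just before the statement), and all involved integrals converge absolutely for $f\in\mathcal{S}(\bbr^d)$.

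Next, I would compute $\int_{\bbr^d} P^{\nu_\alpha}_t(f)(x) g(x) \mu_\alpha(dx)$ and apply Fubini's theorem (justified by $f,g \in \mathcal{S}(\bbr^d)$ and boundedness of $p_\alpha$) to interchange the order of integration. Identifying the factor which multiplies $f(u)\,\mu_\alpha(du)=f(u)p_\alpha(u)du$ in the resulting integral and using the symmetry of $\mu_\alpha$, namely $p_\alpha(-z)=p_\alpha(z)$, to rewrite $p_\alpha\!\bigl((u-xe^{-t})/(1-e^{-\alpha t})^{1/\alpha}\bigr) = p_\alpha\!\bigl((xe^{-t}-u)/(1-e^{-\alpha t})^{1/\alpha}\bigr)$, I would then relabel variables ($x \leftrightarrow u$) to obtain the first identity in \eqref{eq:int_rep_dual}.

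For the second identity, I would apply the change of variables $z = e^{-t}u - x$ in the integral of the first identity, so that $u = e^t(x+z)$ and $du = e^{td}dz$. Then $x - ue^{-t} = -z$, and another application of the symmetry $p_\alpha(-z)=p_\alpha(z)$ turns $p_\alpha\!\bigl((x-ue^{-t})/(1-e^{-\alpha t})^{1/\alpha}\bigr)$ into $p_\alpha\!\bigl(z/(1-e^{-\alpha t})^{1/\alpha}\bigr)$; substituting also $g(u) = g(e^t x + e^t z)$ and $p_\alpha(u)=p_\alpha(e^t x + e^t z)$ yields the second formula.

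The argument is essentially a double change of variables together with Fubini's theorem, so no serious obstacle is expected. The only point that deserves care is the invocation of symmetry $p_\alpha(-z)=p_\alpha(z)$, which holds since $\mu_\alpha$ is nondegenerate symmetric $\alpha$-stable; one also needs to observe that the division by $p_\alpha(x)$ in the statement is well-defined precisely because $p_\alpha>0$ on $\bbr^d$, as recalled just before the lemma.
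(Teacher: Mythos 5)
Your proof is correct and follows essentially the same route as the paper: both compute the pairing $\int_{\bbr^d} P^{\nu_\alpha}_t(f)(x)g(x)\mu_\alpha(dx)$, use a change of variables (equivalently, the kernel representation of the Mehler formula) together with Fubini's theorem to read off the adjoint, and then use the evenness of $p_\alpha$ and a further substitution to pass to the second form. The justifications you flag (positivity of $p_\alpha$, symmetry, absolute convergence for Schwartz test functions) are exactly the ones needed.
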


\begin{proof}
Let $f,g \in \mathcal{S}(\bbr^d)$ and let $t>0$. Then, 
\begin{align*}
\int_{\bbr^d} P^{\nu_\alpha}_t(f)(x) g(x) p_\alpha(x)dx = \int_{\bbr^{2d}} f\left(xe^{-t}+(1-e^{-\alpha t})^{\frac{1}{\alpha}}y\right) p_\alpha(y)g(x) p_\alpha(x) dxdy. 
\end{align*}
Now, let us perform several changes of variables: first change $y$ into $z/(1-e^{-\alpha t})^{\frac{1}{\alpha}}$, then $x$ into $e^t u$ and finally, $(u+z,u)$ into $(x,y)$. Then, 
\begin{align*}
\int_{\bbr^d} P^{\nu_\alpha}_t(f)(x) g(x) p_\alpha(x)dx = \int_{\bbr^{2d}} f(x) g(e^ty)p_\alpha(ye^t)p_\alpha\left(\dfrac{x-y}{(1-e^{-\alpha t})^{\frac{1}{\alpha}}}\right)\frac{e^{td}dxdy}{(1-e^{-\alpha t})^{\frac{d}{\alpha}}}. 
\end{align*}
This concludes the proof of the lemma. 
\end{proof}
\noindent
Note that this representation generalizes completely the case $\alpha=2$ for which $(P^{\gamma}_t)^* = P^{\gamma}_t$, for all $t\geq 0$. Also, note that the previous representation ensures that, for all $g \in \mathcal{S}(\bbr^d)$ and all $t>0$, 
\begin{align*}
\int_{\bbr^d} (P^{\nu_\alpha}_t)^*(g)(x) \mu_\alpha(dx) = \int_{\bbr^d} g(x) \mu_\alpha(dx),
\end{align*}
which can be seen using a duality argument and to the fact that $P^{\nu_\alpha}_t$, $t>0$, is mass conservative.~Based on \eqref{eq:int_rep_dual}, let us give a specific representation of the dual semigroup as the composition of three elementary operators.~For this purpose, denote by $M_\alpha$ the multiplication operator by the stable density $p_\alpha$. Namely, for all $g \in \mathcal{S}(\bbr^d)$ and all $x \in \bbr^d$,
\begin{align*}
M_\alpha(g)(x) = g(x) p_\alpha(x). 
\end{align*}
The inverse of $M_\alpha$ corresponds to multiplication by $1/p_\alpha$.~Now, denote by $(T_t^{\alpha})_{t\geq 0}$, the continuous family of operators, defined, for all $g \in \mathcal{S}(\bbr^d)$, all $x \in \bbr^d$ and all $t>0$, by
\begin{align*}
T_t^\alpha(g)(x) = \int_{\bbr^d} g(u) p_\alpha \left(\dfrac{x-u e^{-t}}{(1-e^{-\alpha t})^{\frac{1}{\alpha}}}\right) \frac{du}{(1-e^{-\alpha t})^{\frac{d}{\alpha}}},
\end{align*}
with the convention that $T_0^\alpha(g)=g$.~For fixed $t>0$, the previous operator admits a representation which is close in spirit to the Mehler representation of the semigroup $(P^{\nu_\alpha}_t)_{t \geq 0}$: for all $g \in \mathcal{S}(\bbr^d)$, all $x \in \bbr^d$ and all $t\geq 0$, 
\begin{align*}
T^{\alpha}_t(g)(x) = e^{td} \int_{\bbr^d} g \left(e^{t} x + (1-e^{-\alpha t})^{\frac{1}{\alpha}}e^{t}z\right) \mu_\alpha(dz). 
\end{align*}
Moreover, from Fourier inversion, for all $g \in \mathcal{S}(\bbr^d)$, all $x \in \bbr^d$ and all $t\geq 0$, 
\begin{align}\label{eq:Fourier_Rep_T}
T^{\alpha}_t(g)(x) = e^{td} \int_{\bbr^d} \mathcal{F}(g)(\xi) e^{i \langle \xi ; x e^t \rangle} \frac{\varphi_\alpha(e^t \xi)}{\varphi_\alpha(\xi)} \frac{d\xi}{(2\pi)^d}.
\end{align}
In particular, the Fourier transform of $T^\alpha_t(g)$ is given, for all $\xi \in \bbr^d$ and all $t\geq 0$, by
\begin{align*}
\mathcal{F}(T^\alpha_t(g))(\xi) = \mathcal{F}(g)(e^{-t} \xi) \frac{\varphi_\alpha(\xi)}{\varphi_\alpha(e^{-t}\xi)}.
\end{align*}
Then, thanks to Lemma \ref{lem:int_rep_dual_semi}, for all $g \in \mathcal{S}(\bbr^d)$ and all $t>0$, 
\begin{align}\label{eq:rep_dual_semigroup}
(P^{\nu_\alpha}_t)^*(g) = ((M_\alpha)^{-1} \circ T^{\alpha}_t \circ M_\alpha)(g).
\end{align}
The semigroup of operators $((P^{\nu_{\alpha}}_t)^*)_{t\geq 0}$ is the $h$-transform of the semigroup $(T_t^{\alpha})_{t \geq 0}$ by the positive function $p_\alpha$ (see, e.g., \cite[Section $1.15.8$]{BGL14}) which is harmonic for the generator of $(T_t^{\alpha})_{t \geq 0}$. 
The next technical lemma gathers standard properties of the continuous family of operators $(T_t^\alpha)_{t\geq 0}$.~First, define the following bilinear form which appears as a remainder in the product rule for the non-local operator $D^{\alpha-1}$: for all $f,g \in \mathcal{S}(\bbr^d)$ and all $x \in \bbr^d$, 
\begin{align}\label{eq:remainder_stable}
R^{\alpha}(f,g)(x) = \int_{\bbr^d} (f(x+u)-f(x))(g(x+u)-g(x))u \nu_\alpha(du). 
\end{align}
In particular, this remainder term is null when $\alpha=2$ since the classical product rule holds in this diffusive situation.~Finally, for all $x \in \bbr^d$, 
\begin{align}\label{eq:rule_palpha}
D^{\alpha-1}(p_\alpha)(x) = -x p_\alpha(x) , \quad  (D^{\alpha-1})^*(p_\alpha)(x) = x p_\alpha(x).
\end{align}
The next lemma states and proves many rather elementary properties of the family of operators $(T_t^{\alpha})_{t \geq 0}$. 

\begin{lem}\label{lem:prop_T_t}
For all $f\in \mathcal{C}_b(\bbr^d)$ and all $s,t \geq 0$, 
\begin{align*}
T_{s+t}^{\alpha}(f) = (T_t^{\alpha} \circ T_s^\alpha)(f) = (T_s^\alpha \circ T_t^{\alpha})(f). 
\end{align*}
For all $f \in \mathcal{S}(\bbr^d)$, 
\begin{align*}
\underset{t \rightarrow +\infty}{\lim} T_t^{\alpha}(f)(x) = M_\alpha \left( \int_{\bbr^d} f(x) dx \right) , \quad \underset{t \rightarrow 0^+}{\lim} T_t^{\alpha}(f)(x) = f(x). 
\end{align*}
For all $f \in \mathcal{C}_b(\bbr^d)$ and all $t\geq 0$,
\begin{align*}
\int_{\bbr^d} T_t^\alpha(p_\alpha f)(x)dx = \int_{\bbr^d} p_\alpha(x)f(x)dx.
\end{align*}
For all $f \in \mathcal{C}_b(\bbr^d)$ with $f \geq 0$ and all $t\geq 0$,
\begin{align*}
T_t^{\alpha}(f) \geq 0, \quad T_t^{\alpha}(1) = e^{td}, \quad T_t^{\alpha}(p_\alpha) = p_\alpha. 
\end{align*}
For all $f,g \in \mathcal{S}(\mathbb{R}^d)$ and all $t \geq 0$, 
\begin{align*}
\int_{\bbr^d} T_t^\alpha(f)(x)g(x)dx = \int_{\bbr^d} f(x) P^{\nu_\alpha}_t(g)(x)dx. 
\end{align*}
Namely, for all $t>0$, the dual operator of $T_t^\alpha$ in standard Lebesgue spaces is given, for all $f \in  \mathcal{S}(\bbr^d)$, by  
\begin{align*}
(T_t^\alpha)^*(f) = P^{\nu_\alpha}_t(f). 
\end{align*}
The generator $A_\alpha$ of $(T_t^\alpha)_{t \geq 0}$ is given, for all $f \in \mathcal{S}(\bbr^d)$ and all $x \in \bbr^d$,  by
\begin{align*}
A_\alpha(f)(x) & = d f(x) + \langle x; \nabla(f)(x) \rangle + \int_{\bbr^d} \langle \nabla(f)(x+u)- \nabla(f)(x) ; u \rangle \nu_\alpha(du).
\end{align*} 
For all $x \in \bbr^d$, 
\begin{align*}
A_\alpha(p_\alpha)(x) = 0. 
\end{align*}
The ``carr\'e du champs operator " associated with $A_\alpha$ is given, for all $f \in \mathcal{S}(\bbr^d)$ and all $x\in \bbr^d$, by
\begin{align*} 
\Gamma_\alpha(f,g)(x) = -  \frac{d f(x)g(x)}{2} + \frac{\alpha}{2} \int_{\bbr^d} (f(x+u)-f(x))(g(x+u)-g(x)) \nu_\alpha(du).
\end{align*}
For all $f \in \mathcal{S}(\bbr^d)$ and all $x \in \bbr^d$, 
\begin{align}\label{eq:formula_gen}
A_\alpha(p_\alpha f)(x) & = \sum_{k=1}^d \left(R_k^{\alpha}(\partial_k(p_\alpha),f)(x)+R_k^{\alpha}(\partial_k(f),p_\alpha)(x)\right) \nonumber \\ 
& \quad\quad + \sum_{k=1}^d \left(\partial_k(p_\alpha)(x) D_k^{\alpha-1}(f)(x) + p_\alpha(x) \partial_kD^{\alpha-1}_k(f)(x)\right). 
\end{align}
For all $g\in \mathcal{C}^1_b(\bbr^d)$, all $u \in \bbr^d$, all $x \in \bbr^d$ and all $t \geq 0$,
\begin{align*}
\Delta_{u} \left(T_t^{\alpha}(g)\right)(x) = e^{td} \int_{\bbr^d} \Delta_{ue^{t}}(g)(xe^t + (1-e^{-\alpha t})^{\frac{1}{\alpha}}e^t z) \mu_\alpha(dz),
\end{align*}
and, 
\begin{align*}
\nabla_{\nu_\alpha}(T^{\alpha}_t(g))(x) \leq e^{\frac{\alpha t}{2}} T^{\alpha}_{t} \left(\nabla_{\nu_\alpha}(g)\right)(x).
\end{align*}
In particular, for all $f \in \mathcal{S}(\bbr^d)$ and all $x \in \bbr^d$, 
\begin{align}\label{eq:Adjoint_Stable_OU}
(\mathcal{L}_\alpha)^*(f)(x) & = ((M_\alpha)^{-1} \circ A_\alpha \circ M_\alpha)(f)(x) ,  \nonumber\\
& = \frac{1}{p_\alpha(x)}\sum_{k=1}^d \left(R_k^{\alpha}(\partial_k(p_\alpha),f)(x)+R_k^{\alpha}(\partial_k(f),p_\alpha)(x)\right) \nonumber\\ 
& \quad\quad + \sum_{k=1}^d \left(\frac{\partial_k(p_\alpha)(x)}{p_\alpha(x)} D_k^{\alpha-1}(f)(x) + \partial_kD^{\alpha-1}_k(f)(x)\right). 
\end{align}
Finally, the ``carr\'e du champs" operator associated with the generator $(\mathcal{L}_\alpha)^*$ is given, for all $f,g\in \mathcal{S}(\bbr^d)$ and all $x \in \bbr^d$, by
\begin{align*}
\Gamma^*(f,g)(x)& = \frac{\alpha}{2} \int_{\bbr^d} (f(x+u)-f(x))(g(x+u)-g(x)) \nu_\alpha(du) \\
& \quad\quad + \frac{1}{2}\sum_{k=1}^d \frac{\partial_k(p_\alpha)(x)}{p_\alpha(x)}R_k^\alpha(f,g)(x)\\
& \quad\quad +\frac{1}{2p_\alpha(x)} \sum_{k=1}^d \left(\partial_k R_k^{\alpha}(p_\alpha,fg)(x)-g(x)\partial_k R_k^{\alpha}(p_\alpha,f)(x)-f(x)\partial_k R_k^{\alpha}(p_\alpha,g)(x)\right).
\end{align*}
\end{lem}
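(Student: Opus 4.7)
The plan is to organize the many assertions into three computational blocks and to dispose of them one after the other, all through essentially routine bookkeeping.

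First, I would handle the elementary properties. The semigroup property is cleanest from the Fourier representation \eqref{eq:Fourier_Rep_T}, in which the multiplier $\varphi_\alpha(\xi)/\varphi_\alpha(e^{-t}\xi)$ is a multiplicative cocycle in $t$. The two limits as $t\to 0^+$ and $t\to+\infty$ follow from dominated convergence applied to the kernel form \eqref{eq:int_rep_dual}, using positivity and continuity of $p_\alpha$. Mass preservation, positivity, $T_t^\alpha(1)=e^{td}$, and the duality $\int T_t^\alpha(f)g\,dx = \int f P_t^{\nu_\alpha}(g)\,dx$ are all immediate from the same chain of changes of variables used in the proof of Lemma \ref{lem:int_rep_dual_semi}, read in reverse; the harmonicity $T_t^\alpha(p_\alpha)=p_\alpha$ is a rephrasing of $(P_t^{\nu_\alpha})^*(1)=1$ (invariance of $\mu_\alpha$) through the factorization $(P_t^{\nu_\alpha})^* = M_\alpha^{-1}\circ T_t^\alpha \circ M_\alpha$.

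Next, I would identify the generator and the carr\'e du champs. Differentiating the duality at $t=0$ yields $A_\alpha = (\mathcal{L}^\alpha)^*$ in $L^2(\bbr^d,dx)$, and I would compute this adjoint by an integration by parts on $-\langle x;\nabla g\rangle$ (which produces $df + \langle x;\nabla f\rangle$) and, for the non-local part, by the rewriting $\int\langle \nabla g(x+u) - \nabla g(x); u\rangle\nu_\alpha(du) = \sum_k \partial_k D_k^{\alpha-1}(g)$ together with $(D_k^{\alpha-1})^* = -D_k^{\alpha-1}$, which is available here thanks to the symmetry of $\nu_\alpha$. Then $A_\alpha(p_\alpha)=0$ is the infinitesimal version of $T_t^\alpha(p_\alpha)=p_\alpha$. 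For $\Gamma_\alpha$, the standard formula $\Gamma_\alpha(f,g) = \tfrac{1}{2}(A_\alpha(fg) - fA_\alpha(g) - gA_\alpha(f))$ produces, after cancellation of the transport and of the term $f(x)\int\langle \nabla g(x+u)-\nabla g(x);u\rangle\nu_\alpha(du)$ and its symmetric, an integrand of drift type against $\nu_\alpha$. The key identity, obtained by integrating by parts radially in the polar decomposition $\nu_\alpha = r^{-\alpha-1}dr\,\sigma(dy)$, is
\[
\int \langle \nabla_u \psi(u); u\rangle\,\nu_\alpha(du) = \alpha \int \psi(u)\,\nu_\alpha(du),
\]
valid for $\psi$ vanishing with order $2$ at the origin; applied to $\psi(u) = (f(x+u)-f(x))(g(x+u)-g(x))$ this produces both the factor $\alpha/2$ and the symmetric integrand in the stated formula.

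For the factorization \eqref{eq:formula_gen}, I would expand $A_\alpha(p_\alpha f) = d(p_\alpha f) + \langle x;\nabla(p_\alpha f)\rangle + \sum_k \partial_k D^{\alpha-1}_k(p_\alpha f)$ using the non-local product rule $D^{\alpha-1}(p_\alpha f) = p_\alpha D^{\alpha-1}(f) + f D^{\alpha-1}(p_\alpha) + R^\alpha(p_\alpha, f)$ and the identity $D^{\alpha-1}(p_\alpha) = -xp_\alpha$ from \eqref{eq:rule_palpha}; the transport and dimensional terms cancel, and $\partial_k R^\alpha_k(p_\alpha,f) = R_k^\alpha(\partial_k p_\alpha, f) + R_k^\alpha(\partial_k f, p_\alpha)$ is a differentiation under the integral in \eqref{eq:remainder_stable}. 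Dividing by $p_\alpha$ and invoking $(\mathcal{L}^\alpha)^* = M_\alpha^{-1}\circ A_\alpha\circ M_\alpha$ yields \eqref{eq:Adjoint_Stable_OU}. The difference formula for $\Delta_u(T_t^\alpha g)$ is a one-line substitution obtained by evaluating the Mehler representation at $x+u$ and $x$ and subtracting.

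The main obstacle lies in the sub-commutation and in the carr\'e du champs $\Gamma^*$. For the former, starting from the difference formula just derived, I would apply Minkowski's integral inequality with $\mu_\alpha$ as outer measure and $\nu_\alpha$ as inner measure to get
\[
\nabla_{\nu_\alpha}(T_t^\alpha g)(x) \leq e^{td} \int \Bigl(\int |\Delta_{e^t u}(g)(\cdot)|^2 \nu_\alpha(du)\Bigr)^{1/2}\mu_\alpha(dz),
\]
then use the $\alpha$-scaling \eqref{eq:scale} in the form $\int F(e^t u)\nu_\alpha(du) = e^{\alpha t}\int F(v)\nu_\alpha(dv)$ to rewrite the inner square as $e^{\alpha t}(\nabla_{\nu_\alpha}g)^2(\cdot)$, producing the stated factor $e^{\alpha t/2}$. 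For $\Gamma^*$, I would apply the standard polarization formula to \eqref{eq:Adjoint_Stable_OU}: the bilinearity and symmetry of $R^\alpha$, combined with the same radial-$\alpha$ integration by parts used for $\Gamma_\alpha$, produce the three pieces displayed. I expect the hardest part to be the careful sign-bookkeeping of the $\partial_k R^\alpha_k(p_\alpha,\cdot)$ remainders and of the term $\partial_k(p_\alpha)/p_\alpha$ in this last polarization step.
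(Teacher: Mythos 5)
Your proposal is correct and follows essentially the same route as the paper: the authors dismiss everything except \eqref{eq:formula_gen} as "classical, based on a characteristic function methodology and the Fourier representation \eqref{eq:Fourier_Rep_T}" — exactly your first block — and their detailed computation of \eqref{eq:formula_gen} is the same product-rule expansion of $D_k^{\alpha-1}(p_\alpha f)$ combined with $D^{\alpha-1}(p_\alpha)=-xp_\alpha$ that you describe. The additional details you supply for the items the paper omits (the radial integration by parts giving the factor $\alpha$ in the carr\'e du champs, the Minkowski-plus-scaling argument for the sub-commutation, and the polarization yielding $\Gamma^*$) are all sound.
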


\begin{proof}
The proof is very classical and based on a characteristic function methodology and on the Fourier representation \eqref{eq:Fourier_Rep_T}.~The only non-trivial identity is given by \eqref{eq:formula_gen}. So, for all $f \in \mathcal{S}(\bbr^d)$ and all $x \in \bbr^d$, 
\begin{align*}
A_\alpha(p_\alpha f)(x) = d p_\alpha(x) f(x) + \langle x ; p_\alpha(x) \nabla(f)(x) \rangle + \langle x ; f(x) \nabla(p_\alpha)(x) \rangle + \sum_{k=1}^d \partial_k D_k^{\alpha-1}(p_\alpha f)(x).  
\end{align*}
Moreover, 
\begin{align*}
D_k^{\alpha-1}(p_\alpha f)(x) = p_\alpha(x) D_k^{\alpha-1}(f)(x) + f(x) D_k^{\alpha-1}(p_\alpha)(x) + R_k^\alpha(p_\alpha,f)(x). 
\end{align*}
Thus, 
\begin{align*}
\partial_k D_k^{\alpha-1}(p_\alpha f)(x) = A + B + C,
\end{align*}
where,
\begin{align*}
A = \partial_k R_k^\alpha(p_\alpha,f)(x), \quad B = \partial_k \left(p_\alpha(x) D_k^{\alpha-1}(f)\right)(x), \quad C = \partial_k \left(f(x) D_k^{\alpha-1}(p_\alpha)\right)(x).
\end{align*}
Now, using the classical product rule,
\begin{align*}
A = R_k^{\alpha} \left(\partial_k(p_\alpha) , f\right)(x) + R_k^{\alpha} \left(\partial_k(f) , p_\alpha\right)(x),
\end{align*}
and, 
\begin{align*}
B = \partial_k(p_\alpha)(x) D_{k}^{\alpha-1}(f)(x) + p_\alpha(x) \partial_k D_k^{\alpha-1}(f)(x).
\end{align*}
Finally, using \eqref{eq:rule_palpha}, 
\begin{align*}
C = -x_k p_\alpha(x) \partial_k(f)(x) + f(x) \left(- p _\alpha(x) - x_k \partial_k(p_\alpha)(x)\right),
\end{align*}
and putting everything together concludes the proof of \eqref{eq:formula_gen}. 
\end{proof}
\noindent
From the previous lemma and the decomposition \eqref{eq:rep_dual_semigroup}, it is clear, by duality, that the linear operator $(P^{\nu_\alpha}_t)^*$ is continuous on every $L^p(\mu_\alpha)$, for $p \in (1, +\infty)$. Indeed, for all $f,g \in \mathcal{S}(\bbr^d)$ and all $t \geq 0$, 
\begin{align*}
\langle (P^{\nu_\alpha}_t)^*(g);f \rangle_{L^2(\mu_\alpha)} & = \langle T_t^\alpha(M_\alpha(g)) ; f \rangle_{L^2(\bbr^d,dx)}  \\ 
& = \langle M_\alpha(g) ; P_t^{\nu_\alpha}(f)\rangle_{L^2(\bbr^d,dx)} = \langle g ;P_t^{\nu_\alpha}(f) \rangle_{L^2(\mu_\alpha)}. 
\end{align*} 
Moreover, based on the last statements of Lemma \ref{lem:prop_T_t}, one can infer the corresponding formulas for the generator of the ``carr\'e de Mehler" semigroup on $\mathcal{S}(\bbr^d)$ and for its corresponding square field operator: for all $f,g \in \mathcal{S}(\bbr^d)$ and all $x \in \bbr^d$, 
\begin{align*}
\mathcal{L}(f)(x) & = \frac{1}{\alpha} \bigg( -\langle x ; \nabla(f)(x)\rangle + \sum_{k=1}^d \partial_k D^{\alpha-1}_{k}(f)(x)+\frac{1}{p_\alpha(x)}\sum_{k=1}^d\partial_k R_k^\alpha(p_\alpha,f)(x)\\
&\quad\quad +\sum_{k=1}^d\left(\frac{\partial_k(p_\alpha)(x)}{p_\alpha(x)}D_k^{\alpha-1}(f)(x)+\partial_kD_k^{\alpha-1}(f)(x)\right)\bigg),
\end{align*}
and, 
\begin{align*}
\tilde{\Gamma}(f,g)(x) = \frac{1}{\alpha} \left(\Gamma(f,g)(x) + \Gamma^*(f,g)(x)\right). 
\end{align*}
Let us now prove two Bismut-type formulas associated with $(P^{\nu_\alpha}_t)_{t \geq 0}$ and $((P^{\nu_\alpha}_t)^*)_{t \geq 0}$ for integro-differential operators appearing in the generators of the respective semigroups. 

\begin{prop}\label{prop:Bismut_2}
Let $d \geq 1$, let $\alpha \in (1,2)$, let $\mu_\alpha$ be a nondegenerate symmetric $\alpha$-stable probability measure on $\bbr^d$ and let $p_\alpha$ be its positive Lebesgue density.~Then, for all $f \in \mathcal{S}(\bbr^d)$, all $x \in \bbr^d$ and all $t>0$,
\begin{align}\label{eq:Bismut_Gradient_OU}
\nabla P_t^{\nu_\alpha}(f)(x) = - \dfrac{e^{-t}}{(1-e^{-\alpha t})^{\frac{1}{\alpha}}} \int_{\bbr^d} \dfrac{\nabla(p_\alpha)(y)}{p_\alpha(y)} f(xe^{-t} + (1-e^{-\alpha t})^{\frac{1}{\alpha}}y) \mu_\alpha(dy),
\end{align}
and,
\begin{align}\label{eq:Bismut_Fract_DUO}
D^{\alpha-1}\left((P_t^{\nu_\alpha})^*(f)\right)(x) + \frac{1}{p_\alpha(x)} R^\alpha \left(p_\alpha, (P_t^{\nu_\alpha})^*(f) \right)(x) &= \dfrac{- x e^{-\alpha t}}{(1-e^{-\alpha t})}  (P^{\nu_\alpha}_t)^{*}(f)(x) \nonumber \\
&\quad\quad + \dfrac{e^{-t}}{\left(1-e^{-\alpha t}\right)} (P^{\nu_\alpha}_t)^{*}(xf)(x),
\end{align}
for all $x \in \bbr^d$. 
\end{prop}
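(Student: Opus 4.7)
The plan is to prove the two Bismut-type formulas by different but related calculations: \eqref{eq:Bismut_Gradient_OU} by transferring a classical gradient onto the stable density via an integration-by-parts change of variables in the Mehler representation of $P_t^{\nu_\alpha}$, and \eqref{eq:Bismut_Fract_DUO} by combining the non-local Leibniz rule for $D^{\alpha-1}$ with the multiplicative intertwining $(P_t^{\nu_\alpha})^* = M_\alpha^{-1} \circ T_t^\alpha \circ M_\alpha$ from \eqref{eq:rep_dual_semigroup} and the identity $D^{\alpha-1}(p_\alpha)(x) = -x\, p_\alpha(x)$ from \eqref{eq:rule_palpha}.

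For \eqref{eq:Bismut_Gradient_OU}, I start from the Mehler representation written under change of variables, namely
$$P_t^{\nu_\alpha}(f)(x) = \frac{1}{(1-e^{-\alpha t})^{d/\alpha}} \int_{\bbr^d} f(z)\, p_\alpha\!\left(\frac{z - x e^{-t}}{(1-e^{-\alpha t})^{1/\alpha}}\right) dz.$$
Differentiating under the integral in $x$ produces a factor $-e^{-t}/(1-e^{-\alpha t})^{1/\alpha}$ and brings $\nabla p_\alpha$ inside; changing variables back to $y=(z - x e^{-t})/(1-e^{-\alpha t})^{1/\alpha}$ and multiplying and dividing by $p_\alpha(y)$ gives exactly the claimed expression against $\mu_\alpha(dy)$.

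For \eqref{eq:Bismut_Fract_DUO}, the non-local product rule
$$D^{\alpha-1}(fg)(x) = f(x) D^{\alpha-1}(g)(x) + g(x) D^{\alpha-1}(f)(x) + R^\alpha(f,g)(x),$$
applied with $f=p_\alpha$ and $g=(P_t^{\nu_\alpha})^*(f)$, combined with $D^{\alpha-1}(p_\alpha)=-x\, p_\alpha$, rewrites the left-hand side as
$$\frac{1}{p_\alpha(x)} D^{\alpha-1}\!\bigl(p_\alpha (P_t^{\nu_\alpha})^*(f)\bigr)(x) + x (P_t^{\nu_\alpha})^*(f)(x).$$
Since $p_\alpha (P_t^{\nu_\alpha})^*(f) = T_t^\alpha(p_\alpha f)$ by \eqref{eq:rep_dual_semigroup}, the task reduces to the key computation of $D^{\alpha-1}(T_t^\alpha(h))(x)$ for a generic $h$. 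Starting from the kernel form of $T_t^\alpha(h)(x)$ analogous to the one above, I apply $D^{\alpha-1}$ in the $x$ variable, swap integrals by Fubini, make the change of variable $v = (1-e^{-\alpha t})^{1/\alpha} w$ inside the $\nu_\alpha$-integral, and invoke the $\alpha$-scaling \eqref{eq:scale} of $\nu_\alpha$. The inner increment integral then collapses to $D^{\alpha-1}(p_\alpha)$ evaluated at $y = (x - u e^{-t})/(1-e^{-\alpha t})^{1/\alpha}$, which equals $-y\, p_\alpha(y)$. Collecting the resulting factors and separating $x - u e^{-t} = x - e^{-t} u$ produces
$$D^{\alpha-1}(T_t^\alpha(h))(x) = \frac{-x}{1-e^{-\alpha t}}\, T_t^\alpha(h)(x) + \frac{e^{-t}}{1-e^{-\alpha t}}\, T_t^\alpha(\mathrm{id}\cdot h)(x).$$
Specializing to $h = p_\alpha f$, dividing by $p_\alpha(x)$, and adding the leftover term $x (P_t^{\nu_\alpha})^*(f)(x)$ gives, after the telescoping $x - x/(1-e^{-\alpha t}) = -x e^{-\alpha t}/(1-e^{-\alpha t})$, exactly the right-hand side of \eqref{eq:Bismut_Fract_DUO}.

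The main obstacle is the scaling bookkeeping in the second formula: one has to track the powers of $(1-e^{-\alpha t})^{1/\alpha}$ that appear simultaneously from the normalization of the kernel of $T_t^\alpha$, from the change of variable inside $\nu_\alpha$, and from the $\alpha$-homogeneity identity $\int \phi(v) \nu_\alpha(dv) = c^{-\alpha} \int \phi(c w) \nu_\alpha(dw)$, and verify that they recombine into the claimed $1/(1-e^{-\alpha t})$ and $e^{-t}/(1-e^{-\alpha t})$ coefficients. The regularity of $f \in \mathcal{S}(\bbr^d)$ and the tail estimates on $p_\alpha$ recalled in the preliminaries justify all the Fubini exchanges and differentiations under the integral sign.
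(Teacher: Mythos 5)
Your proof is correct, and for \eqref{eq:Bismut_Gradient_OU} it coincides with the paper's (differentiating the Mehler kernel and throwing the derivative onto $p_\alpha$ is the same as combining the commutation relation with an integration by parts). For \eqref{eq:Bismut_Fract_DUO} your organization is genuinely different from the paper's, even though both arguments ultimately rest on the same two facts, namely the scale invariance \eqref{eq:scale} of $\nu_\alpha$ and the identity $D^{\alpha-1}(p_\alpha)=-x\,p_\alpha$. The paper works directly with the kernel $F_{\alpha,x,t}$ of $(P_t^{\nu_\alpha})^*$, observes that the left-hand side equals $\int_{\bbr^d} f(v)\bigl(\int_{\bbr^d} u\,\nu_\alpha(du)\,\tfrac{p_\alpha(x+u)}{p_\alpha(x)}\Delta_u(F_{\alpha,\cdot,t}(v))(x)\bigr)dv$, and then splits the inner integrand by hand into a piece evaluated via $D^{\alpha-1}(p_\alpha)$ at the rescaled point (scale invariance) and a piece evaluated via $D^{\alpha-1}(p_\alpha)(x)$ at the base point. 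You instead apply the non-local Leibniz rule abstractly to rewrite the left-hand side as $\tfrac{1}{p_\alpha(x)}D^{\alpha-1}\bigl(p_\alpha\,(P_t^{\nu_\alpha})^*(f)\bigr)(x)+x\,(P_t^{\nu_\alpha})^*(f)(x)$ and then exploit the $h$-transform factorization \eqref{eq:rep_dual_semigroup} to reduce everything to the single clean identity $D^{\alpha-1}(T_t^\alpha(h))=\tfrac{-x}{1-e^{-\alpha t}}T_t^\alpha(h)+\tfrac{e^{-t}}{1-e^{-\alpha t}}T_t^\alpha(\mathrm{id}\cdot h)$; the two splittings are algebraically equivalent, but yours isolates the structural reason the remainder term $R^\alpha$ must appear on the left (it is exactly what is needed to complete $p_\alpha D^{\alpha-1}(g)$ to $D^{\alpha-1}(p_\alpha g)$ modulo the zeroth-order term), and it yields the intermediate formula for $D^{\alpha-1}\circ T_t^\alpha$ as a reusable byproduct, at the cost of invoking \eqref{eq:rep_dual_semigroup} also for the unbounded function $v\mapsto v f(v)$, which is harmless for $f\in\mathcal{S}(\bbr^d)$ given the tail bounds on $p_\alpha$.
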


\begin{proof}
The identity \eqref{eq:Bismut_Gradient_OU} is a direct consequence of the commutation relation and of a standard integration by parts.~Let us prove \eqref{eq:Bismut_Fract_DUO}.~For this purpose, for all $x \in \bbr^d$ and all $t>0$ fixed, denote by $F_{\alpha,x,t}$ the function defined, for all $u \in \bbr^d$, by
\begin{align*}
F_{\alpha,x,t}(u) = \frac{1}{\left(1- e^{-\alpha t}\right)^{\frac{d}{\alpha}}} \frac{p_\alpha(u)}{p_\alpha(x)} p_\alpha\left( \dfrac{x-ue^{-t}}{(1- e^{-\alpha t})^{\frac{1}{\alpha}}} \right).
 \end{align*}
Note that $F_{\alpha,x,t}$ is a probability density on $\bbr^d$.~Moreover,  for all $t>0$ and all $u \in \bbr^d$, 
\begin{align*}
\int_{\bbr^d} F_{\alpha,x,t}(u)  p_\alpha(x)dx = p_\alpha(u).
\end{align*}
First, for all $x \in \bbr^d$, all $u \in \bbr^d$ and all $t>0$,
\begin{align*}
\Delta_u\left( (P_t^{\nu_\alpha})^*(f) \right)(x) & = \int_{\bbr^d} f(v) (F_{\alpha, x+u, t}(v) - F_{\alpha, x, t}(v))dv , \\
&= \int_{\bbr^d} f(v) \Delta_u(F_{\alpha, ., t}(v))(x)dv.
\end{align*}
Thus, 
\begin{align*}
D^{\alpha-1}\left((P_t^{\nu_\alpha})^*(f)\right)(x) = \int_{\bbr^d} f(v) \left(\int_{\bbr^d}\Delta_u(F_{\alpha, ., t}(v))(x) u \nu_\alpha(du)\right) dv. 
\end{align*}
Similarly, by linearity, 
\begin{align*}
\frac{1}{p_\alpha(x)} R^\alpha \left(p_\alpha, (P_t^{\nu_\alpha})^*(f) \right)(x) & = \int_{\bbr^d} u \nu_\alpha(du) \left(\frac{p_\alpha(x+u)}{p_\alpha(x)}-1\right)\Delta_u\left( (P_t^{\nu_\alpha})^*(f) \right)(x)  , \\
& = \int_{\bbr^d} f(v) \left(\int_{\bbr^d} u \nu_\alpha(du) \left(\frac{p_\alpha(x+u)}{p_\alpha(x)}-1\right)\Delta_u(F_{\alpha, ., t}(v))(x)\right) dv.
\end{align*}
Then,  for all $t>0$ and all $x \in \bbr^d$, 
\begin{align}\label{eq:bismut_type}
D^{\alpha-1}\left((P_t^{\nu_\alpha})^*(f)\right)(x) + \frac{1}{p_\alpha(x)} R^\alpha \left(p_\alpha, (P_t^{\nu_\alpha})^*(f) \right)(x) = \int_{\bbr^d} f(v) \left(\int_{\bbr^d} u \nu_\alpha(du)\frac{p_\alpha(x+u)}{p_\alpha(x)}\Delta_u(F_{\alpha, ., t}(v))(x)\right) dv.
\end{align}
Let us fix $x,v \in \bbr^d$ and $t>0$. Then, 
\begin{align*}
\int_{\bbr^d} u \nu_\alpha(du)\frac{p_\alpha(x+u)}{p_\alpha(x)}\Delta_u(F_{\alpha, ., t}(v))(x)& = \dfrac{1}{(1-e^{-\alpha t})^{\frac{d}{\alpha}}} \int_{\bbr^d} u \nu_\alpha(du) \dfrac{p_\alpha(x+u)}{p_\alpha(x)} \bigg(\frac{p_\alpha(v)}{p_\alpha(x+u)} p_\alpha\left( \dfrac{x+u-ve^{-t}}{(1- e^{-\alpha t})^{\frac{1}{\alpha}}} \right)\\
&\quad\quad - \frac{p_\alpha(v)}{p_\alpha(x)} p_\alpha\left( \dfrac{x-ve^{-t}}{(1- e^{-\alpha t})^{\frac{1}{\alpha}}} \right)\bigg) , \\ 
& = \dfrac{p_\alpha(v)}{p_{\alpha}(x)^2} \dfrac{1}{(1-e^{-\alpha t})^{\frac{d}{\alpha}}} \int_{\bbr^d} u \nu_\alpha(du)\bigg( p_\alpha(x) p_\alpha\left( \dfrac{x+u-ve^{-t}}{(1- e^{-\alpha t})^{\frac{1}{\alpha}}} \right) \\
&\quad\quad -p_\alpha(x+u) p_\alpha\left( \dfrac{x-ve^{-t}}{(1- e^{-\alpha t})^{\frac{1}{\alpha}}} \right) \bigg) , \\
& =  \dfrac{p_\alpha(v)}{p_{\alpha}(x)^2} \dfrac{1}{(1-e^{-\alpha t})^{\frac{d}{\alpha}}} \int_{\bbr^d} u \nu_\alpha(du) p_\alpha(x) \bigg(p_\alpha\left( \dfrac{x+u-ve^{-t}}{(1- e^{-\alpha t})^{\frac{1}{\alpha}}}\right) \\
&\quad\quad - p_\alpha\left( \dfrac{x-ve^{-t}}{(1- e^{-\alpha t})^{\frac{1}{\alpha}}}\right) \bigg) -  \dfrac{p_\alpha(v)}{p_{\alpha}(x)^2} \dfrac{1}{(1-e^{-\alpha t})^{\frac{d}{\alpha}}}\\
& \quad\quad \times  \int_{\bbr^d} u \nu_\alpha(du) (p_\alpha(x+u) - p_\alpha(x))p_\alpha\left( \dfrac{x-ve^{-t}}{(1- e^{-\alpha t})^{\frac{1}{\alpha}}}\right).
\end{align*}
Recalling that, for all $x \in \bbr^d$, 
\begin{align*}
D^{\alpha-1}(p_\alpha)(x) = -x p_\alpha(x).
\end{align*}
Thus, 
\begin{align*}
\dfrac{p_\alpha(v)}{p_{\alpha}(x)^2} \dfrac{1}{(1-e^{-\alpha t})^{\frac{d}{\alpha}}} D^{\alpha-1}(p_\alpha)(x)p_\alpha\left( \dfrac{x-ve^{-t}}{(1- e^{-\alpha t})^{\frac{1}{\alpha}}}\right) = \dfrac{p_\alpha(v)}{p_{\alpha}(x)} \dfrac{(-x)}{(1-e^{-\alpha t})^{\frac{d}{\alpha}}} p_\alpha\left( \dfrac{x-ve^{-t}}{(1- e^{-\alpha t})^{\frac{1}{\alpha}}}\right),
\end{align*}
and, from scale invariance, 
\begin{align*}
\dfrac{p_\alpha(v)}{p_{\alpha}(x)} \dfrac{1}{(1-e^{-\alpha t})^{\frac{d}{\alpha}}} \int_{\bbr^d} u\nu_\alpha(du) \Delta_{\frac{u}{(1-e^{-\alpha t})^{\frac{1}{\alpha}}}} \left(p_\alpha\right)\left( \dfrac{x - ve^{-t}}{(1-e^{-\alpha t})^{\frac{1}{\alpha}}} \right)&  = \dfrac{p_\alpha(v)}{p_{\alpha}(x)} \dfrac{(1-e^{-\alpha t})^{\frac{1}{\alpha}-1}}{(1-e^{-\alpha t})^{\frac{d}{\alpha}}} \\
&\quad\quad \times D^{\alpha-1}(p_\alpha)\left(\dfrac{x - ve^{-t}}{(1-e^{-\alpha t})^{\frac{1}{\alpha}}}\right) , \\
&  = - \dfrac{p_\alpha(v)}{p_{\alpha}(x)} \dfrac{(1-e^{-\alpha t})^{-1}}{(1-e^{-\alpha t})^{\frac{d}{\alpha}}} \left(x - v e^{-t}\right) \\
&\quad \quad \times p _\alpha \left(\dfrac{x - ve^{-t}}{(1-e^{-\alpha t})^{\frac{1}{\alpha}}}\right).
\end{align*}
Then, using \eqref{eq:bismut_type}, 
\begin{align*}
D^{\alpha-1}\left((P_t^{\nu_\alpha})^*(f)\right)(x) + \frac{1}{p_\alpha(x)} R^\alpha \left(p_\alpha, (P_t^{\nu_\alpha})^*(f) \right)(x) &= \dfrac{- x e^{-\alpha t}}{(1-e^{-\alpha t})}  (P^{\nu_\alpha}_t)^{*}(f)(x) \\
& \quad \quad + \dfrac{e^{-t}}{\left(1-e^{-\alpha t}\right)} (P^{\nu_\alpha}_t)^{*}(hf)(x),
\end{align*}
where $h(v)=v$, for all $v \in \bbr^d$. This concludes the proof of the proposition.
\end{proof}
\noindent
Before moving on, let us prove a technical lemma providing a sharp upper bound for the asymptotic behavior of 
\begin{align*}
\frac{1}{p_\alpha(x)}R^{\alpha}(p_\alpha , f)(x), 
\end{align*}
as $\|x\| \rightarrow +\infty$, with $f \in \mathcal{C}_c^{\infty}(\bbr^d)$, and when the associated L\'evy measure on $\bbr^d$ is given by $\nu_\alpha(du) = du / \|u\|^{\alpha+d}$. 

\begin{lem}\label{lem:upper_bound_remainder}
Let $d \geq 1$, let $\alpha \in (1,2)$ and let $\nu_\alpha (du) = du/\|u\|^{\alpha+d}$.~Let $R^\alpha$ be given by \eqref{eq:remainder_stable} and let $p_\alpha$ be the positive Lebesgue density of the nondegenerate symmetric $\alpha$-stable probability measure $\mu_\alpha$ with L\'evy measure $\nu_\alpha$.~Then, for all $f \in \mathcal{C}_c^{\infty}(\bbr^d)$ and all $x$ large enough, 
\begin{align*}
\left\|\frac{1}{p_\alpha(x)}R^{\alpha}(p_\alpha , f)(x)\right\| \leq C \left(1 + \|x\| \right), 
\end{align*}
for some positive constant $C$ depending on $\alpha$, on $d$ and on $f$. 
\end{lem}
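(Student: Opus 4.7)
The plan is to exploit the compact support of $f$ to localize the integral defining $R^\alpha(p_\alpha,f)(x)$ to a region where $\|u\|$ is comparable to $\|x\|$, and then to use the two-sided bounds $p_\alpha(y) \asymp (1+\|y\|)^{-(\alpha+d)}$ already noted in the preliminaries (which apply here since $\nu_\alpha(du)=du/\|u\|^{\alpha+d}$ is rotationally invariant up to constant).

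Let $K = \operatorname{supp}(f)$ and fix $\|x\|$ so large that $x \notin K$; then $f(x)=0$ and the bilinear remainder reduces to
\begin{align*}
R^\alpha(p_\alpha,f)(x) = \int_{\bbr^d} \bigl(p_\alpha(x+u) - p_\alpha(x)\bigr) f(x+u)\, u\, \nu_\alpha(du).
\end{align*}
Since the integrand vanishes unless $x+u \in K$, one may restrict to $u \in K - x$, on which $\|u\|$ lies in an interval of the form $[\|x\| - r_K, \|x\| + r_K]$ with $r_K = \sup_{y\in K}\|y\|$. On this domain, the L\'evy density satisfies $\|u\|^{-\alpha-d} \leq C_1 \|x\|^{-\alpha-d}$ for all $\|x\|$ large enough.

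The next step is to perform the change of variables $v = x+u$, which turns the integral into one over the fixed compact set $K$. Taking absolute values and using $\|u\| \leq \|x\| + r_K$,
\begin{align*}
\|R^\alpha(p_\alpha,f)(x)\| \leq C_1\, \frac{\|x\| + r_K}{\|x\|^{\alpha+d}} \int_K \bigl(p_\alpha(v) + p_\alpha(x)\bigr)\,|f(v)|\,dv.
\end{align*}
Since $p_\alpha$ is continuous and $K$ is compact, $\int_K p_\alpha(v)|f(v)|dv$ is a finite constant, and $p_\alpha(x)\to 0$, so for $\|x\|$ large the bracketed integral is uniformly bounded by some $C_2$ depending on $\alpha$, $d$, $f$. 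Thus $\|R^\alpha(p_\alpha,f)(x)\| \leq C_3\,\|x\|/\|x\|^{\alpha+d}$ for $\|x\|$ large.

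Finally, dividing by $p_\alpha(x) \geq C_4 (1+\|x\|)^{-(\alpha+d)}$ (the lower bound from the preliminaries) yields
\begin{align*}
\frac{\|R^\alpha(p_\alpha,f)(x)\|}{p_\alpha(x)} \leq \frac{C_3}{C_4} \cdot \|x\|\cdot \frac{(1+\|x\|)^{\alpha+d}}{\|x\|^{\alpha+d}} \leq C(1 + \|x\|),
\end{align*}
for $\|x\|$ large enough. The main obstacle is really a bookkeeping one: ensuring the two-sided bounds on $p_\alpha$ apply (they do, since the spherical part of $\nu_\alpha$ is uniform on $\mathbb{S}^{d-1}$) and verifying that the localization $u \in K-x$ is compatible with the singular behaviour of $\nu_\alpha$ at the origin (it is, because for $\|x\|$ large, $K-x$ is far from $0$, so the singularity at $u=0$ plays no role).
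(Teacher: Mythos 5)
Your argument is correct and follows essentially the same route as the paper's proof: use the compact support of $f$ (so $f(x)=0$ for $\|x\|$ large) to localize the integral, change variables to integrate over the support of $f$, bound the Lévy density by $C\|x\|^{-\alpha-d}$ there, and conclude with the two-sided polynomial bounds on $p_\alpha$. No gaps; the paper merely normalizes $f$ to a bump function supported in $\mathcal{B}(0,1)$ first, which is a cosmetic difference.
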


\begin{proof}
Without loss of generality, assume that $f \in \mathcal{C}_c^{\infty}(\bbr^d)$ is a bump function:~i.e., $\operatorname{Supp}(f) \subset \mathcal{B}(0,1)$ and $f(x) \in [0,1]$, for all $x \in \bbr^d$.~Then, for all $x \in \bbr^d$ such that $\|x\| \geq 3$, 
\begin{align*}
\frac{1}{p_\alpha(x)}R^{\alpha}(p_\alpha , f)(x)& = \frac{1}{p_\alpha(x)} \int_{\bbr^d} u \nu_\alpha(du) \left(p_\alpha(x+u) - p_\alpha(x)\right) f(x+u) , \\
& = \frac{1}{p_\alpha(x)} \int_{\mathcal{B}(0,1)} (u-x) \frac{du}{\|u - x\|^{\alpha+d}} \left(p_\alpha(u) - p_\alpha(x)\right) f(u).
\end{align*}
Thus,  since $\|x\| \geq 3$, 
\begin{align*}
\left\| \frac{1}{p_\alpha(x)}R^{\alpha}(p_\alpha , f)(x) \right\| & \leq \frac{C}{\|x\|^{\alpha + d}} \frac{1}{p_\alpha(x)} \int_{\mathcal{B}(0,1)} \|u -x \| du \left|p_\alpha(u) - p_\alpha(x)\right| |f(u)|.
\end{align*}
Moreover, for all $x \in \bbr^d$, 
\begin{align}
\frac{C_1}{\left(1+ \|x\|\right)^{\alpha+ d}} \leq p_\alpha(x) \leq \frac{C_2}{\left(1+ \|x\|\right)^{\alpha+ d}},
\end{align}
for some $C_1,C_2 >0$ two positive constants depending on $\alpha$ and on $d$.~Thus, for all $\|x\| \geq 3$, 
\begin{align*}
\left\| \frac{1}{p_\alpha(x)}R^{\alpha}(p_\alpha , f)(x) \right\| &\leq C_{\alpha,d,f} \dfrac{(1+ \|x\|)^{\alpha+d}}{\|x\|^{\alpha+d}} \left(1+ \|x\|\right), \\
& \leq C_{\alpha,d,f}  \left(1+ \|x\|\right).
\end{align*}
This concludes the proof of the lemma.
\end{proof}
\noindent
Next, let us investigate pseudo-Poincar\'e inequality (see, e.g., \cite{Ledoux_03} and the references therein) for the dual semigroup $((P_t^{\nu_\alpha})^*_{t \geq 0})$ in $L^p(\mu_\alpha)$, for all $ p \in (1, \alpha)$.  To start, let $(R^\alpha)^*$ be defined, for all $f,g \in \mathcal{S}(\bbr^d)$ and all $x \in \bbr^d$, by
\begin{align*}
(R^{\alpha})^*(g , f)(x) = \int_{\bbr^d} (g(x-u) -g(x))(f(x-u)- f(x)) u \nu_\alpha(du). 
\end{align*}

\begin{prop}\label{prop:pseudo_Poincare_DSG}
Let $d \geq 1$, let $\alpha \in (1,2)$, let $\mu_\alpha$ be a nondegenerate symmetric $\alpha$-stable probability measure on $\bbr^d$, and let $p_\alpha$ be its positive Lebesgue density.  Further,  assume that,
\begin{align}\label{eq:logarithmic_derivative_p}
\left \| \dfrac{\nabla(p_\alpha)}{p_\alpha}  \right\|_{L^p(\mu_\alpha)} <+\infty, \quad p \in (1, +\infty),
\end{align}
and that,  for all $p \in (1, \alpha)$ and all $f \in \mathcal{C}_c^{\infty}(\bbr^d)$, 
\begin{align}
\left\|  \frac{1}{p_\alpha}(R^{\alpha})^*(p_\alpha , f)  \right\|_{L^p(\mu_\alpha)} <+\infty.
\end{align}
Then, for all $p \in (1, \alpha)$, all $f \in \mathcal{C}_c^{\infty}(\bbr^d)$ and all $t >0$, 
\begin{align}\label{eq:pseudo_poinca_lp}
\| (P_t^{\nu_\alpha})^*(f) -f \|_{L^p(\mu_\alpha)} \leq C_\alpha t^{1 - \frac{1}{\alpha}} \left\| (D^{\alpha-1})^{*}(f)  + \frac{1}{p_\alpha}(R^{\alpha})^*(p_\alpha , f) \right\|_{L^p(\mu_\alpha)} \left \| \dfrac{\nabla(p_\alpha)}{p_\alpha}  \right\|_{L^p(\mu_\alpha)}, 
\end{align}
for some $C_\alpha >0$ depending only on $\alpha$. 
\end{prop}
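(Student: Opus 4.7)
The plan is a duality argument combining a divergence-form representation of the adjoint generator $(\mathcal{L}^\alpha)^*$ with the Bismut gradient identity \eqref{eq:Bismut_Gradient_OU} and the $\alpha$-stability of $\mu_\alpha$. Let $q=p/(p-1)$ and set
$$G(f):=D^{\alpha-1}(f)+\frac{1}{p_\alpha}R^\alpha(p_\alpha,f).$$
The symmetry of $\nu_\alpha$ (change of variables $u\mapsto -u$) gives $(D^{\alpha-1})^*=-D^{\alpha-1}$ and $(R^\alpha)^*(p_\alpha,\cdot)=-R^\alpha(p_\alpha,\cdot)$, so $\|G(f)\|_{L^p(\mu_\alpha)}$ equals the $L^p(\mu_\alpha)$-norm appearing on the right-hand side of \eqref{eq:pseudo_poinca_lp}.

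The key identity is
$$(\mathcal{L}^\alpha)^*(f)=\frac{1}{p_\alpha}\nabla\cdot(p_\alpha G(f)).$$
To prove it, start from \eqref{eq:formula_gen}: bilinearity and symmetry of $R_k^\alpha$ yield $R_k^\alpha(\partial_k p_\alpha,f)+R_k^\alpha(\partial_k f,p_\alpha)=\partial_k R_k^\alpha(p_\alpha,f)$, while the classical Leibniz rule gives $\partial_k p_\alpha\cdot D_k^{\alpha-1}(f)+p_\alpha\partial_k D_k^{\alpha-1}(f)=\partial_k(p_\alpha D_k^{\alpha-1}(f))$. Summing over $k$ produces $A_\alpha(p_\alpha f)=\nabla\cdot(p_\alpha G(f))$, and the identity $(\mathcal{L}^\alpha)^*=M_\alpha^{-1}\circ A_\alpha\circ M_\alpha$ from Lemma \ref{lem:prop_T_t} finishes this step.

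By $L^p$-duality the proof reduces to controlling $|\int((P_t^{\nu_\alpha})^*(f)-f)g\,d\mu_\alpha|$ uniformly over $g\in\mathcal{C}_c^\infty(\bbr^d)$ with $\|g\|_{L^q(\mu_\alpha)}\leq 1$. Using the adjoint relation, the fundamental theorem $P_t^{\nu_\alpha}g-g=\int_0^t\mathcal{L}^\alpha P_s^{\nu_\alpha}(g)\,ds$, the divergence-form identity above and a Lebesgue integration by parts (justified by \eqref{eq:logarithmic_derivative_p}, Lemma \ref{lem:upper_bound_remainder} and the fast decay of $p_\alpha$),
\begin{align*}
\int((P_t^{\nu_\alpha})^*(f)-f)g\,d\mu_\alpha&=\int f(P_t^{\nu_\alpha}g-g)\,d\mu_\alpha=\int_0^t\int(\mathcal{L}^\alpha)^*(f)\,P_s^{\nu_\alpha}(g)\,d\mu_\alpha\,ds\\
&=-\int_0^t\int G(f)\cdot\nabla P_s^{\nu_\alpha}(g)\,d\mu_\alpha\,ds.
\end{align*}
H\"older's inequality bounds the integrand by $\|G(f)\|_{L^p(\mu_\alpha)}\|\nabla P_s^{\nu_\alpha}(g)\|_{L^q(\mu_\alpha)}$.

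Finally, from the Bismut formula \eqref{eq:Bismut_Gradient_OU}, applying H\"older in the $y$-variable with exponents $p$ and $q$, raising to the $q$-th power, integrating in $x$ and exploiting $\alpha$-stability ($Xe^{-s}+(1-e^{-\alpha s})^{1/\alpha}Y\sim\mu_\alpha$ when $X,Y\sim\mu_\alpha$ are independent, exactly as in the proof of Proposition \ref{prop:stable_ineq_1}) produce
$$\|\nabla P_s^{\nu_\alpha}(g)\|_{L^q(\mu_\alpha)}\leq\frac{e^{-s}}{(1-e^{-\alpha s})^{1/\alpha}}\Big\|\frac{\nabla p_\alpha}{p_\alpha}\Big\|_{L^p(\mu_\alpha)}\|g\|_{L^q(\mu_\alpha)}.$$
The conclusion then follows upon checking that $\int_0^t e^{-s}(1-e^{-\alpha s})^{-1/\alpha}\,ds\leq C_\alpha t^{1-1/\alpha}$ uniformly in $t>0$ (elementary: on $[0,1]$ the equivalent $1-e^{-\alpha s}\sim\alpha s$ gives a bound of order $t^{1-1/\alpha}$, while on $[1,+\infty)$ the full integral is bounded and $t^{1-1/\alpha}\geq 1$). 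The main technical hurdle is the divergence-form rewriting of $(\mathcal{L}^\alpha)^*$; once that is in hand, the Bismut-plus-stability mechanism delivers the sharp $t^{1-1/\alpha}$ rate.
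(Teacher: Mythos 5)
Your proof is correct and follows essentially the same route as the paper: duality, the rewriting of the adjoint generator applied to $f$ in divergence form (the paper performs this by integrating $\nabla\cdot D^{\alpha-1}P_s^{\nu_\alpha}(g)$ by parts against $p_\alpha f$ and then expanding $(D^{\alpha-1})^*(p_\alpha f)/p_\alpha$ via the product rule, which cancels the drift term and produces exactly your $G(f)$ up to the sign coming from the symmetry of $\nu_\alpha$), followed by the Bismut formula \eqref{eq:Bismut_Gradient_OU}, H\"older's inequality, $\alpha$-stability, and the elementary estimate $\int_0^t e^{-s}(1-e^{-\alpha s})^{-1/\alpha}\,ds\leq C_\alpha t^{1-1/\alpha}$. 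The only cosmetic difference is that you organize the H\"older step as a separate $L^q(\mu_\alpha)$ operator bound on $\nabla P_s^{\nu_\alpha}$, whereas the paper applies a single generalized H\"older inequality on the product measure; both give the same bound.
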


\begin{proof}
The argument is based on duality and on \eqref{eq:Bismut_Gradient_OU}.~Let $f, g \in \mathcal{C}_c^{\infty}(\bbr^d)$, let $p \in (1, \alpha)$ and let $p^* = p/(p-1)$. Then, by standard semigroup arguments, 
\begin{align*}
\langle (P^{\nu_\alpha}_t)^*(f)-f;g \rangle_{L^2(\mu_\alpha)} & = \int_0^t \langle f ; \mathcal{L}^\alpha P^{\nu_\alpha}_s(g)  \rangle_{L^2(\mu_\alpha)} ds , \\
& = - \int_{0}^t \langle xf ; \nabla P_s^{\nu_\alpha}(g) \rangle_{L^2(\mu_\alpha)} ds + \int_{0}^t \langle f ; \nabla. D^{\alpha - 1} P_s^{\nu_\alpha}(g) \rangle_{L^2(\mu_\alpha)} ds , \\
& = \int_{0}^t \left\langle -x f + \dfrac{(D^{\alpha-1})^*(p_\alpha f)}{p_\alpha} ; \nabla P_s^{\nu_\alpha}(g) \right\rangle_{L^2(\mu_\alpha)} ds.
\end{align*}
First, thanks to \eqref{eq:Bismut_Gradient_OU}, for all $s \in (0,t]$ and all $x \in \bbr^d$, 
\begin{align*}
\nabla P_s^{\nu_\alpha}(g)(x) = - \dfrac{e^{-s}}{\left(1- e^{- \alpha s}\right)^{\frac{1}{\alpha}}} \int_{\bbr^d} \dfrac{\nabla(p_\alpha)(y)}{p_\alpha(y)} g \left(xe^{-s} + (1 - e^{-s \alpha})^{\frac{1}{\alpha}}y\right) \mu_\alpha(dy). 
\end{align*}
Moreover, for all $x \in \bbr^d$, 
\begin{align*}
\dfrac{(D^{\alpha-1})^*(p_\alpha f)(x)}{p_\alpha(x)} = (D^{\alpha-1})^{*}(f)(x) + xf(x) + \frac{1}{p_\alpha(x)}(R^{\alpha})^*(p_\alpha , f)(x),
\end{align*}
where, 
\begin{align*}
(R^{\alpha})^*(p_\alpha , f)(x) = \int_{\bbr^d} (p_\alpha(x-u) -p_\alpha(x))(f(x-u)- f(x)) u \nu_\alpha(du). 
\end{align*}
Thus, 
\begin{align*}
\langle (P^{\nu_\alpha}_t)^*(f)-f;g \rangle_{L^2(\mu_\alpha)} & = \int_{0}^t \left\langle (D^{\alpha-1})^{*}(f) + \frac{1}{p_\alpha}(R^{\alpha})^*(p_\alpha , f); \nabla P_s^{\nu_\alpha}(g) \right\rangle_{L^2(\mu_\alpha)} ds.
\end{align*}
Now, for all $s \in (0,t]$
\begin{align*}
\left\langle (D^{\alpha-1})^{*}(f)  + \frac{1}{p_\alpha}(R^{\alpha})^*(p_\alpha , f); \nabla P_s^{\nu_\alpha}(g) \right\rangle_{L^2(\mu_\alpha)} & =  - \dfrac{e^{-s}}{\left(1-e^{- \alpha s}\right)^{\frac{1}{\alpha}}} \int_{\bbr^d} \int_{\bbr^d} \langle(D^{\alpha-1})^{*}(f)(x) \\
&\quad \quad + \frac{1}{p_\alpha(x)}(R^{\alpha})^*(p_\alpha , f)(x) ; \dfrac{\nabla(p_\alpha)(y)}{p_\alpha(y)} \rangle \\
& \quad \quad \times g\bigg(xe^{-s} + (1-e^{-\alpha s})^{\frac{1}{\alpha}}y\bigg)\mu_\alpha(dx) \mu_\alpha(dy).
\end{align*}
By H\"older's inequality, 
\begin{align*}
\left|  \left\langle (D^{\alpha-1})^{*}(f)  + \frac{1}{p_\alpha}(R^{\alpha})^*(p_\alpha , f); \nabla P_s^{\nu_\alpha}(g) \right\rangle_{L^2(\mu_\alpha)} \right|& \leq \dfrac{e^{-s}}{(1-e^{-\alpha s})^{\frac{1}{\alpha}}} \left\| \dfrac{\nabla(p_\alpha)}{p_\alpha} \right\|_{L^p(\mu_\alpha)} \|g\|_{L^{p^*}(\mu_\alpha)}  \\
&\quad \quad  \times \left\| (D^{\alpha-1})^{*}(f)  + \frac{1}{p_\alpha}(R^{\alpha})^*(p_\alpha , f) \right\|_{L^p(\mu_\alpha)}.
\end{align*}
Standard arguments allow to conclude the proof of the proposition.
\end{proof}
\noindent
The inequality \eqref{eq:pseudo_poinca_lp} is a straightforward generalization of the Gaussian  pseudo-Poincar\'e inequality. Before moving on, let us discuss the condition \eqref{eq:logarithmic_derivative_p}.~In the rotationally invariant case, recall the following classical pointwise bounds: for all $x \in \bbr^d$, 
\begin{align}
\frac{C_2}{\left(1+ \|x\|\right)^{\alpha+ d}} \leq p^{\operatorname{rot}}_\alpha(x) \leq \frac{C_1}{\left(1+ \|x\|\right)^{\alpha+ d}},
\end{align}
for some $C_1, C_2$ positive constants. Moreover, (see, e.g, \cite{Chen_Zhang_16}), for all $x \in \bbr^d$, 
\begin{align*}
\| \nabla(p^{\operatorname{rot}}_\alpha)(x) \| \leq \dfrac{C_3}{\left(1+ \|x\|\right)^{\alpha + d+1}},
\end{align*}
for some positive constant $C_3$, so that the logarithmic derivative of $p^{\operatorname{rot}}_\alpha$ is uniformly bounded on $\bbr^d$ and so belongs to $L^p(\mu_\alpha)$, for all $p \geq 1$.~Another interesting case is when the coordinates are independent and distributed according to the same symmetric $\alpha$-stable law on $\bbr$ with $\alpha \in (1,2)$. It is straightforward to check that, in this case, the logarithmic derivative is uniformly bounded on $\bbr^d$.

\begin{rem}\label{rem:Lp_Poin_Stable}
Let us end the $\alpha$-stable case, $\alpha \in (1,2)$, with a discussion regarding $L^p$-Poincar\'e inequalities, for $p \geq 2$.  Classically, by formal semigroup arguments, 
\begin{align*}
\|f\|^p_{L^p(\mu_\alpha)} = \bbe f(X_\alpha) g(X_\alpha) = -  \int_0^{+\infty} \bbe (\mathcal{L}_\alpha)^* (P_t^{\nu_\alpha})^*(f)(X_\alpha) g(X_\alpha) dt, 
\end{align*}
with $f \in \mathcal{C}_c^{\infty}(\bbr^d)$ such that $\mu_\alpha(f) = 0$, with $p \geq 2$ and with $g(x)  = \operatorname{sign}(f(x)) |f(x)|^{p-1}$.~Moreover, using standard integration by parts and \eqref{eq:Bismut_Fract_DUO},  for all $t>0$, 
\begin{align*}
\bbe (\mathcal{L}_\alpha)^* (P_t^{\nu_\alpha})^*(f)(X_\alpha) g(X_\alpha) & = - \int_{\bbr^d} \bigg\langle \dfrac{- x e^{-\alpha t}}{(1-e^{-\alpha t})}  (P^{\nu_\alpha}_t)^{*}(f)(x) \\
&\quad\quad+ \dfrac{e^{-t}}{\left(1-e^{-\alpha t}\right)} (P^{\nu_\alpha}_t)^{*}(hf)(x)  ;  \nabla(g)(x)\bigg\rangle \mu_\alpha(dx).
\end{align*}
Now, based on Proposition \ref{prop:pseudo_Poincare_DSG} and on the fact that $p \geq 2$,  it does not seem possible to reproduce the semigroup proof of the $L^p$-Poincar\'e inequality 
presented in the Gaussian case. Indeed, the bad concentration properties of the $\alpha$-stable probability measures, with $\alpha \in (1,2)$, as well as the occurence of the remainder terms $R^\alpha$ and $(R^\alpha)^*$ prohibit the use of H\"older's inequality followed by the  Cauchy-Schwarz inequality.  
\end{rem}
\noindent
Very recently, moment estimates for heavy-tailed probability measures on $\bbr^d$ of Cauchy-type have been obtained in \cite[Corollary 4.3.]{AdPolSt20} (see, also $(4.2)$ and $(4.3)$ and the discussion above these) based on weighted Beckner-type inequalities. Note that the right hand side of these inequalities put into play weigthed norms of the classical gradient operator. Let us observe that it is possible to obtain these weighted Poincar\'e inequalities from the non-local ones in some cases, as shown in the next proposition. 

\begin{prop}\label{prop:exp}
Let $\mu$ be the standard exponential probability measure on $(0,+\infty)$ and let $\nu$ be the associated L\'evy measure on $(0, +\infty)$. 
Then, for all $f \in \mathcal{S}(\bbr)$, 
\begin{align*}
\int_{(0,+\infty)} \int_{(0,+\infty)} |f(x+u) - f(x)|^2 \nu(du) \mu(dx) \leq \int_{(0,+\infty)} w |f'(w)|^2 \mu(dw).
\end{align*}
\end{prop}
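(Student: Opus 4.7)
The plan is to bound the non-local energy by the local one via a Cauchy--Schwarz argument applied to the fundamental theorem of calculus, followed by a careful swap in the order of integration. Recall that the standard exponential probability measure $\mu(dx) = \bbone_{(0,+\infty)}(x) e^{-x}dx$ is infinitely divisible with L\'evy measure $\nu(du) = \bbone_{(0,+\infty)}(u)\frac{e^{-u}}{u}du$, which cancels the factor $1/u$ produced by the estimate below. This cancellation is what makes the argument work.

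First, for $f \in \mathcal{S}(\bbr)$, write $f(x+u)-f(x) = \int_0^u f'(x+s)ds$ and apply the Cauchy--Schwarz inequality to get
\begin{align*}
|f(x+u)-f(x)|^2 \leq u \int_0^u |f'(x+s)|^2 ds,\qquad x>0,\ u>0.
\end{align*}
Substituting into the left-hand side of the target inequality and using the explicit form of $\nu$, the factor $u$ cancels with $1/u$, and one gets
\begin{align*}
\int_0^{+\infty}\!\!\int_0^{+\infty} |f(x+u)-f(x)|^2 \nu(du)\mu(dx) \leq \int_0^{+\infty}\!\!\int_0^{+\infty}\!\!\int_0^u |f'(x+s)|^2 ds\, e^{-u}du\, e^{-x}dx.
\end{align*}

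Next I would perform the change of variables $w = x+s$ (with $x$ and $u$ fixed, so $w \in [x, x+u]$) and then invoke Fubini--Tonelli to integrate in $w$ last. For fixed $w>0$ the remaining domain is $\{(x,u) : 0 \leq x \leq w,\ u \geq w-x\}$, which yields
\begin{align*}
\int_0^{+\infty}|f'(w)|^2 \left(\int_0^w e^{-x}\int_{w-x}^{+\infty} e^{-u} du\, dx\right) dw = \int_0^{+\infty} |f'(w)|^2 \left(\int_0^w e^{-w} dx\right) dw,
\end{align*}
since the inner $u$-integral equals $e^{-(w-x)}$ and combines with $e^{-x}$ to produce $e^{-w}$. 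The bracketed quantity is exactly $w e^{-w}$, so the right-hand side equals $\int_0^{+\infty} w|f'(w)|^2 \mu(dw)$, which is the desired bound.

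There is no real obstacle here: the argument is a one-line pointwise Cauchy--Schwarz followed by Fubini, and the only thing to be careful about is the identification of the domain after the change of variables $w = x+s$. The inequality is in fact sharp in the limiting regime $u \to 0^+$, reflecting the fact that $\nu$ encodes the diffusive behavior of the exponential law at small scales.
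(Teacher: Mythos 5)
Your proposal is correct and is essentially the paper's own argument: the paper bounds $|f(x+u)-f(x)|^2 \leq u^2\int_0^1|f'(x+tu)|^2dt$ by Jensen (identical to your Cauchy--Schwarz bound after the substitution $s=tu$), and then computes the resulting weight via a change of variables and Fubini, arriving at the same inner integral $we^{-w}$. Your direct substitution $w=x+s$ is a marginally cleaner way to organize the same computation than the paper's family of maps $\Phi_t(x,u)=(x+tu,u)$, but the two are equivalent.
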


\begin{proof}
First, by Jensen's inequality,
\begin{align*}
|f(x+u)-f(x)|^2 \leq u^2 \int_0^1 |f'(x+tu)|^2 dt. 
\end{align*}
Thus, since $\nu(du)/du = e^{-u}/u$, $u>0$, 
\begin{align*}
\int_{(0,+\infty)^2}|f(x+u)-f(x)|^2 \nu(du) \mu(dx) \leq \int_{(0,+\infty)^2} u\left( \int_0^1 |f'(x+tu)|^2 dt\right) e^{-u} e^{-x} dx du. 
\end{align*}
Now, for all $t \in (0,1)$, let $\mathcal{D}_t = \{(w,z) \in (0,+\infty)^2 :\, w > tz\}$ and let $\Phi_t$ be the $\mathcal{C}^1$-diffeomorphism from $(0,+\infty)^2$ to $\mathcal{D}_t$ defined, for all $(x,u) \in (0,+\infty)^2$, by
\begin{align*}
\Phi_t(x,u) = (x+tu,u). 
\end{align*}
Thus, by the change of variables with $\Phi_t$,
\begin{align*}
\int_{(0,+\infty)^2}|f(x+u)-f(x)|^2 \nu(du) \mu(dx) & \leq \int_{(0,+\infty)^2} z\left( \int_0^1 |f'(w)|^2 dt\right) \bbone_{\mathcal{D}_t}(w,z)e^{-z} e^{-(w-tz)} dw dz , \\
& \leq \int_0^{+\infty} \int_0^1  |f'(w)|^2 e^{-w} \left(\int_0^{+\infty} z e^{-z} e^{tz} \bbone_{\mathcal{D}_t}(w,z) dz\right) dt dw.
\end{align*} 
Now,
\begin{align*}
\int_0^1 \left( \int_0^{\frac{w}{t}} z e^{-z} e^{z t} dz \right) dt & = \int_0^1 \left(\int_0^w \frac{y}{t}e^{- \frac{y}{t}} e^{y} \frac{dy}{t}\right) dt , \\
& = \int_0^w y e^{y} \left(\int_0^1 e^{-\frac{y}{t}} \frac{dt}{t^2} \right) dy = \int_0^w y e^{y} \frac{e^{-y}}{y} dy = w.
\end{align*}
This concludes the proof of the lemma. 
\end{proof}
\noindent

\section{Stein's Kernels and High Dimensional CLTs}
\noindent
This section shows how to apply \cite[Theorem 5.10.]{AH20_3}~or Theorem \ref{thm:cov_rep_dirichlet_form} to build Stein's kernels to provide stability result for Poincar\'e-type inequality and rates of convergence, in $1$-Wasserstein distance, in high dimensional central limit theorem.~Let $d \geq 1$ and let $\Sigma$ be a covariance matrix, which is not identically null, and let $\gamma_\Sigma$ be the centered Gaussian probability measure on $\bbr^d$ with covariance matrix, $\Sigma$, i.e.,  the characteristic function of the corresponding Gaussian random vector is given, for all $\xi \in \bbr^d$, by
\begin{align*}
\hat{\gamma}_{\Sigma}(\xi) = \exp\left(- \dfrac{\langle \xi ; \Sigma(\xi) \rangle}{2}\right). 
\end{align*}
Next, let $U_\Sigma$ be the Poincar\'e functional formally defined, for all suitable $\mu \in \mathcal{M}_1(\bbr^d)$ ($\mathcal{M}_1(\bbr^d)$ is the set of probability measures on $\bbr^d$), by 
\begin{align*}
U_\Sigma(\mu) := \underset{f \in \mathcal{H}_\Sigma(\mu)}{\sup} \dfrac{\operatorname{Var}_\mu(f)}{\int_{\bbr^d} \langle \nabla(f)(x) ; \Sigma\left(\nabla(f)(x)\right) \rangle \mu(dx)},
\end{align*}
where $\mathcal{H}_\Sigma(\mu)$ is the set of Borel measurable real-valued functions $f$ defined on $\bbr^d$ such that 
\begin{align*}
\int_{\bbr^d} \left| f(x) \right|^2 \mu(dx) < +\infty, \quad 0<\int_{\bbr^d} \langle \nabla(f)(x) ; \Sigma\left(\nabla(f)(x)\right) \rangle \mu(dx) < +\infty,
\end{align*}
and such that $\operatorname{Var}_\mu(f)>0$. It is well-known since the works \cite[Theorem 3]{BU_84} and \cite[Theorem $2.1$]{ChLo87} that the functional $U_\Sigma$ is rigid.  Let us adopt the methodology developed in \cite{CFP19,Fathi_SM,AH19_2,AH20_3} using Stein's method to obtain a stability result which generalizes the one for the isotropic case.  For the sake of completeness, the rigidity result is re-proved next via semigroup methods, although the result  is rather immediate 
from \eqref{eq:covariance_representation_gaussian}.

\begin{lem}\label{lem:poincare_gaussian}
Let $d \geq 1$ and let $\Sigma$ be a, not identically null,  $d \times d$ covariance matrix.~Then, 
\begin{align*}
U_\Sigma(\gamma_\Sigma) = 1. 
\end{align*}
\end{lem}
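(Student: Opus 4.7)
The plan is to re-run the standard semigroup proof of the Gaussian Poincar\'e inequality, but with the Ornstein-Uhlenbeck dynamics tuned to the anisotropic covariance $\Sigma$. First I would introduce, as a natural $\Sigma$-analogue of \eqref{eq:OUSM}, the semigroup
\begin{align*}
P^{\gamma_\Sigma}_t(f)(x) = \int_{\bbr^d} f\left(xe^{-t}+\sqrt{1-e^{-2t}}\,y\right) \gamma_\Sigma(dy), \quad t \geq 0,\; x \in \bbr^d,
\end{align*}
which admits $\gamma_\Sigma$ as invariant probability measure, has generator $\mathcal{L}^{\gamma_\Sigma}(f)(x) = -\langle x;\nabla(f)(x)\rangle + \operatorname{tr}(\Sigma \operatorname{Hess}(f)(x))$, and satisfies the pointwise commutation $\nabla P^{\gamma_\Sigma}_t(f) = e^{-t} P^{\gamma_\Sigma}_t(\nabla(f))$ (the latter obtained by differentiating the Mehler formula under the integral sign).

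Next, for $f \in \mathcal{S}(\bbr^d)$ with $\gamma_\Sigma(f)=0$ and nonconstant, I would write, thanks to the $L^2(\gamma_\Sigma)$-ergodicity of $(P^{\gamma_\Sigma}_t)_{t\geq 0}$,
\begin{align*}
\operatorname{Var}_{\gamma_\Sigma}(f) = -\int_0^{+\infty} \frac{d}{dt}\|P^{\gamma_\Sigma}_t(f)\|^2_{L^2(\gamma_\Sigma)} \, dt = 2\int_0^{+\infty} \int_{\bbr^d} \langle \nabla P^{\gamma_\Sigma}_t(f)(x) ;\Sigma \nabla P^{\gamma_\Sigma}_t(f)(x)\rangle \gamma_\Sigma(dx) \, dt,
\end{align*}
after an integration by parts against $\mathcal{L}^{\gamma_\Sigma}$.

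The key step is then to combine the commutation with Jensen's inequality to transport the Dirichlet energy from $P^{\gamma_\Sigma}_t(f)$ back to $f$. Factoring $\Sigma = AA^{T}$, which is possible since $\Sigma$ is positive semi-definite, gives $\langle v;\Sigma v\rangle = \|A^T v\|^2$, so
\begin{align*}
\langle \nabla P^{\gamma_\Sigma}_t(f)(x);\Sigma \nabla P^{\gamma_\Sigma}_t(f)(x)\rangle = e^{-2t} \|A^T P^{\gamma_\Sigma}_t(\nabla(f))(x)\|^2 = e^{-2t} \|P^{\gamma_\Sigma}_t(A^T \nabla(f))(x)\|^2,
\end{align*}
since $P^{\gamma_\Sigma}_t$ acts coordinatewise on vector-valued functions and commutes with the constant linear map $A^T$. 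Jensen's inequality for the convex function $\|\cdot\|^2$ then yields $\|P^{\gamma_\Sigma}_t(A^T\nabla(f))(x)\|^2 \leq P^{\gamma_\Sigma}_t(\|A^T\nabla(f)\|^2)(x) = P^{\gamma_\Sigma}_t(\langle \nabla(f);\Sigma \nabla(f)\rangle)(x)$. Integrating against $\gamma_\Sigma$, using the invariance of $\gamma_\Sigma$ under $P^{\gamma_\Sigma}_t$ and $\int_0^{+\infty}e^{-2t}dt=1/2$, this collapses to
\begin{align*}
\operatorname{Var}_{\gamma_\Sigma}(f) \leq \int_{\bbr^d} \langle \nabla(f)(x) ;\Sigma \nabla(f)(x)\rangle \gamma_\Sigma(dx),
\end{align*}
giving $U_\Sigma(\gamma_\Sigma) \leq 1$. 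To conclude $U_\Sigma(\gamma_\Sigma)=1$, I would test the ratio on linear forms $f_v(x)=\langle v;x\rangle$ with $v$ chosen so that $\langle v;\Sigma v\rangle >0$ (which exists since $\Sigma$ is not identically null): both numerator and denominator equal $\langle v;\Sigma v\rangle$, so the supremum is attained at $1$. The only delicate point is the Jensen step in the possibly degenerate case, which is cleanly handled by the $\Sigma=AA^T$ factorization; otherwise everything is a direct transposition of the classical isotropic argument.
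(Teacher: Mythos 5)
Your proposal is correct and follows essentially the same route as the paper: the anisotropic Ornstein--Uhlenbeck semigroup with its Mehler formula, the commutation $\nabla P_t = e^{-t}P_t(\nabla)$, Jensen's inequality after factoring $\Sigma$ (the paper uses $\sqrt{\Sigma}$ where you use $AA^T$), and invariance to get $U_\Sigma(\gamma_\Sigma)\leq 1$, then testing on linear functions for the reverse bound (the paper uses the coordinate functions $x\mapsto x_j$, which is the same idea).
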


\begin{proof}
The proof is very classical and relies on a semigroup argument to prove the Poincar\'e inequality for the Gaussian probability measure $\gamma_\Sigma$ and on the fact that the functions $x \mapsto x_j$, for all $j \in \{1, \dots, d\}$, are eigenfunctions of the Ornstein-Uhlenbeck operator associated with $\gamma_\Sigma$.  Let $(P_t^{\Sigma})_{t \geq 0}$ be the Ornstein-Uhlenbeck semigroup given, for all $f \in \mathcal{C}_b(\bbr^d)$, all $t \geq 0$ and all $x \in \bbr^d$,  by
\begin{align*}
P_t^{\Sigma}(f)(x) = \int_{\bbr^d} f \left(x e^{-t} + \sqrt{1-e^{-2t}}y\right) \gamma_{\Sigma}(dy). 
\end{align*}
From the above Mehler formula, it is clear that the probability measure $\gamma_\Sigma$ is an invariant measure for the semigroup $(P_t^{\Sigma})_{t \geq 0}$, that $\mathcal{S}(\bbr^d)$ is a core for the generator, denoted by $\mathcal{L}^{\Sigma}$, of $(P_t^{\Sigma})_{t \geq 0}$ and, that for all $f \in \mathcal{S}(\bbr^d)$ and all $x \in \bbr^d$, 
\begin{align*}
\mathcal{L}^{\Sigma}(f)(x) = - \langle x ; \nabla(f)(x) \rangle + \Delta^{\Sigma}(f)(x),
\end{align*}
with,
\begin{align*}
\Delta^{\Sigma}(f)(x)  = \frac{1}{(2\pi)^d} \int_{\bbr^d} \mathcal{F}(f)(\xi) e^{i \langle x ; \xi \rangle}  \langle i\xi ; \Sigma(i\xi) \rangle  d\xi = \langle \Sigma ; \operatorname{Hess}(f)(x) \rangle_{HS},
\end{align*}
where $\langle A ; B \rangle_{HS} = \operatorname{Tr}(A^tB)$.  Next, let $f \in \mathcal{S}(\bbr^d)$ be such that $\int_{\bbr^d} f(x) \gamma_{\Sigma}(dx) = 0$.  Differentiating the variance of $P^{\Sigma}_t(f)$ with respect to the time parameter gives
\begin{align*}
\dfrac{d}{dt} \left( \bbe P^{\Sigma}_t(f)(X)^2 \right)  = 2 \bbe P^{\Sigma}_t(f)(X)\mathcal{L}^{\Sigma} P^{\Sigma}_t(f)(X),
\end{align*}
where $X \sim \gamma_{\Sigma}$. Hence, for all $t \geq 0$, 
\begin{align*}
\dfrac{d}{dt} \left( \bbe P^{\Sigma}_t(f)(X)^2 \right) = 2 \bbe P^{\Sigma}_t(f)(X) \left( -\langle X ; \nabla(P^{\Sigma}_t(f))(X) \rangle + \langle \Sigma ; \operatorname{Hess} \left(  P^{\Sigma}_t(f) \right)(X) \rangle_{HS} \right). 
\end{align*}
Now,  since $\mathcal{S}(\bbr^d)$ is a core for $\mathcal{L}^{\Sigma}$, invariant with respect to $P_t^{\Sigma}$, for all $t \geq 0$, and stable for the pointwise multiplication of functions, 
\begin{align*}
\bbe \langle X  ; \nabla\left(P^{\Sigma}_t(f)^2 \right)(X)  \rangle  = \bbe \langle \Sigma ; \operatorname{Hess} \left( P_t^{\Sigma}(f)^2\right)(X) \rangle_{HS}.
\end{align*}
Thus, by Leibniz formula, for all $t \geq 0$, 
\begin{align*}
\dfrac{d}{dt} \left( \bbe P^{\Sigma}_t(f)(X)^2 \right) & = - \left( \bbe \langle \Sigma ; \operatorname{Hess} \left( P_t^{\Sigma}(f)^2\right)(X) \rangle_{HS}  - 2 \bbe P^{\Sigma}_t(f)(X) \langle \Sigma ; \operatorname{Hess} \left(  P^{\Sigma}_t(f) \right)(X)\rangle_{HS} \right) , \\
& = - \bbe \langle \Sigma ; \operatorname{Hess} \left(P_t^{\Sigma}(f)^2\right)(X) - 2 P^{\Sigma}_t(f)(X) \operatorname{Hess} \left(  P^{\Sigma}_t(f) \right)(X)  \rangle_{HS},  \\
& = - 2 \bbe \langle \nabla\left( P_t^\Sigma(f)\right)(X) ; \Sigma \left(\nabla\left( P_t^\Sigma(f)\right)(X)\right) \rangle.
\end{align*}
Now, the commutation formula, $\nabla \left(P^\Sigma_t(f) \right) = e^{-t} P_t^{\Sigma}(\nabla(f))$, ensures that
\begin{align*}
\bbe \langle \nabla\left( P_t^\Sigma(f)\right)(X) ; \Sigma \left(\nabla\left( P_t^\Sigma(f)\right)(X)\right) \rangle & = e^{-2t} \bbe \langle P_t^{\Sigma}\left( \nabla(f) \right)(X) ; \Sigma P_t^{\Sigma} \left(\nabla(f)\right)(X) \rangle , \\
& =e^{-2t} \bbe \langle \sqrt{\Sigma} P_t^{\Sigma}\left( \nabla(f) \right)(X) ; \sqrt{\Sigma} P_t^{\Sigma} \left(\nabla(f)\right)(X) \rangle , \\
& =e^{-2t} \bbe \langle  P_t^{\Sigma}\left( \sqrt{\Sigma}(\nabla(f)) \right)(X) ;  P_t^{\Sigma} \left(\sqrt{\Sigma}(\nabla(f))\right)(X) \rangle , \\
& = e^{-2t} \bbe \left\|  P_t^{\Sigma}\left( \sqrt{\Sigma}(\nabla(f)) \right)(X) \right\|^2 , \\
& \leq e^{-2t} \bbe P_t^{\Sigma} \left(\left\| \sqrt{\Sigma}(\nabla(f))\right\|^2\right)(X) , \\
& \leq e^{-2t} \bbe \left\| \sqrt{\Sigma}(\nabla(f))(X)\right\|^2.
\end{align*}
Thus, for all $t \geq 0$, 
\begin{align*}
\dfrac{d}{dt} \left( \bbe P^{\Sigma}_t(f)(X)^2 \right)  \geq -2 e^{-2t} \bbe \left\| \sqrt{\Sigma}(\nabla(f))(X)\right\|^2.
\end{align*}
Integrating with respect to $t$ between $0$ and $+\infty$ ensures that
\begin{align}\label{ineq:Poincare_type_anisotropic_gauss_measure}
\bbe f(X)^2 \leq \bbe \left\| \sqrt{\Sigma} (\nabla(f))(X) \right\|^2. 
\end{align}
This last inequality implies that $U_\Sigma(\gamma_{\Sigma}) \leq 1$.~Next, for all $j \in \{1 , \dots,  d\}$,  let $g_j$ be the function defined, for all $x \in \bbr^d$, by $g_j(x) = x_j$.  Now,  for all $j \in \{1, \dots, d\}$,
\begin{align*}
\operatorname{Var}_{\gamma_\Sigma}(g_j) = \int_{\bbr^d} x_j^2 \gamma_{\Sigma}(dx) = \sigma_{j,j} = \int_{\bbr^d} \langle \nabla(g_j)(x) ; \Sigma(\nabla(g_j))(x) \rangle \gamma_{\Sigma}(dx),
\end{align*}
where $\Sigma = (\sigma_{i,j})_{1\leq i,j \leq d}$.~Thus, $U_\Sigma(\gamma_{\Sigma}) \geq 1$. This concludes the proof of the lemma.
\end{proof}

\begin{rem}\label{rem:what_about_the_degenerate_case}
The proof of the Poincar\'e-type inequality \eqref{ineq:Poincare_type_anisotropic_gauss_measure} for the probability measure $\gamma_\Sigma$ 
could have been performed without using the semigroup $(P_t^{\Sigma})_{t \geq 0}$. 
Instead, one could use the covariance representation \eqref{eq:covariance_representation_gaussian}.~Indeed, taking $g = f$ and using the  Cauchy-Schwarz inequality, one retrieves the inequality \eqref{ineq:Poincare_type_anisotropic_gauss_measure}.  Following the end of the proof of Lemma \ref{lem:poincare_gaussian}, one can conclude that $U_\Sigma (\gamma_\Sigma) = 1$ also when $\Sigma$ is generic but different of $0$. 
\end{rem}
\noindent
The next lemma provides the rigidity result for $U_{\Sigma}$. 

\begin{lem}\label{lem:rigidity}
Let $d \geq 1$ and let $\Sigma = (\sigma_{i,j})_{1\leq i,j \leq d}$ be a, not identically null, $d \times d$ covariance matrix.~Let $\mu$ be a probability measure on $\bbr^d$ with finite second moment such that, for all $i \in \{1, \dots, d\}$,
\begin{align*}
\int_{\bbr^d} x \mu(dx) = 0 , \quad \int_{\bbr^d} x_i^2 \mu(dx) = \sigma_{i,i}.
\end{align*}
Then,  $U_{\Sigma}(\mu) = 1$ if and only if $\mu= \gamma_{\Sigma}$. 
\end{lem}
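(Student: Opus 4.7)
The implication $\mu = \gamma_\Sigma \Rightarrow U_\Sigma(\mu) = 1$ is contained in Lemma \ref{lem:poincare_gaussian}. For the converse, I would adapt the Stein-kernel strategy from the references cited just above the statement of the lemma to the anisotropic symmetric bilinear form
\[
\mathcal{E}(u,v) := \int_{\bbr^d} \langle \nabla u(x) ; \Sigma \nabla v(x) \rangle \mu(dx),
\]
observing that $U_\Sigma(\mu) = 1$ yields the Poincar\'e-type inequality $\operatorname{Var}_\mu(v) \leq \mathcal{E}(v,v)$ and thereby the exponential decay $\psi(t) = e^{-t}$ for the strongly continuous semigroup associated with the smallest closed extension of $\mathcal{E}$ restricted to $H_0 := \{u \in L^2(\mu) :\, \mu(u) = 0\}$.

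Applying Theorem \ref{thm:cov_rep_dirichlet_form} to the coordinate function $u_i(x) := x_i$, which belongs to $H_0$ by centering and to $\mathcal{D}(\mathcal{E})$ since $\mathcal{E}(u_i,u_i) = \sigma_{i,i}$, produces for each $i \in \{1,\dots,d\}$ a function $f_i \in \mathcal{D}(\mathcal{E})$ (normalized so that $\mu(f_i) = 0$) such that $\mathcal{E}(f_i,v) = \int_{\bbr^d} x_i v(x)\, \mu(dx)$ for every $v \in \mathcal{D}(\mathcal{E})$. Taking $v = f_i$ and chaining Cauchy--Schwarz with the Poincar\'e inequality yields $\mathcal{E}(f_i,f_i) \leq \sigma_{i,i}$, while taking $v = u_i$ and using the diagonal-matching hypothesis gives $\mathcal{E}(f_i,u_i) = \int_{\bbr^d} x_i^2\, \mu(dx) = \sigma_{i,i}$. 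Expanding the non-negative quantity
\[
0 \leq \mathcal{E}(f_i - u_i, f_i - u_i) = \mathcal{E}(f_i,f_i) - 2\sigma_{i,i} + \sigma_{i,i} = \mathcal{E}(f_i,f_i) - \sigma_{i,i} \leq 0
\]
forces $\mathcal{E}(f_i - u_i, f_i - u_i) = 0$, so by the Cauchy--Schwarz inequality applied to $\mathcal{E}$ one has $\mathcal{E}(f_i,v) = \mathcal{E}(u_i,v)$ for every $v \in \mathcal{D}(\mathcal{E})$. This rewrites the defining identity as the Gaussian-type integration by parts formula $\int_{\bbr^d} x_i v(x)\, \mu(dx) = \sum_{j=1}^d \sigma_{i,j} \int_{\bbr^d} \partial_j v(x)\, \mu(dx)$ valid for every $v$ in a sufficiently rich test-function class.

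To conclude, I would specialize the resulting Stein identity to plane waves $v_\xi(x) = e^{i \langle \xi ; x \rangle}$ (after splitting into real and imaginary parts, or after a Schwartz approximation argument leveraging the finite second moment of $\mu$), which turns it into the first-order linear system $\partial_{\xi_i} \hat{\mu}(\xi) = -(\Sigma \xi)_i \hat{\mu}(\xi)$, $1 \leq i \leq d$, with initial condition $\hat{\mu}(0) = 1$; its unique solution is $\hat{\gamma}_\Sigma(\xi) = \exp(- \langle \xi ; \Sigma \xi \rangle / 2)$, hence $\mu = \gamma_\Sigma$. The main obstacle I anticipate is the justification of the abstract Dirichlet-form setup: one must verify the closability of $\mathcal{E}$ on $L^2(\mu)$ under the minimal hypotheses of the lemma (which is delicate when $\Sigma$ is degenerate or $\mu$ has a singular part with respect to Lebesgue measure) and ensure that $\mathcal{D}(\mathcal{E})$ is rich enough to accommodate the plane-wave test functions needed in the Fourier step, possibly via a $\mathcal{C}_c^\infty(\bbr^d)$-approximation combined with dominated convergence.
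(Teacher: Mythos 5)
Your sufficiency direction and your Fourier endgame (turning the Stein identity $\int_{\bbr^d} x_i v\,\mu(dx)=\sum_j \sigma_{i,j}\int_{\bbr^d}\partial_j v\,\mu(dx)$ into the first-order system $\partial_{\xi_i}\hat\mu(\xi)=-(\Sigma\xi)_i\hat\mu(\xi)$ with $\hat\mu(0)=1$) coincide with the paper's. The genuine gap is in the middle step, and you have correctly located it yourself without resolving it: your derivation of the Stein identity goes through Theorem \ref{thm:cov_rep_dirichlet_form}, which presupposes that $\mathcal{E}(u,v)=\int\langle\nabla u;\Sigma\nabla v\rangle\,d\mu$ is a \emph{closed} (or at least closable) form on $L^2(\mu)$ and that a strongly continuous Markovian semigroup is attached to it. The lemma assumes only that $\mu$ has finite second moment with matching diagonal covariance; such a $\mu$ can be singular, and closability of the pre-Dirichlet form can genuinely fail, so the machinery you invoke is not available under the stated hypotheses. (Indeed, elsewhere in the paper an entire regularization argument, Theorem \ref{thm:stability_result_without_closability}, is devoted to removing exactly this closability assumption.) A secondary issue: finiteness of $\mathcal{E}(u_i,u_i)$ does not place the unbounded coordinate function $u_i(x)=x_i$ in the domain of the smallest closed extension, which is the closure of the initial core in the form norm; this would need a separate approximation argument even when the form is closable.

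The paper sidesteps all of this with an elementary variational argument that uses nothing beyond the inequality $\operatorname{Var}_\mu(f)\le\mathcal{E}_\Sigma(f,f)$ evaluated on explicit functions. One plugs $f_j=g_j+\varepsilon f$, with $g_j(x)=x_j$ and $f\in\mathcal{S}(\bbr^d)$, into the Poincar\'e inequality and expands both sides in $\varepsilon$. The hypothesis $\int x_j^2\,\mu(dx)=\sigma_{j,j}$ gives $\operatorname{Var}_\mu(g_j)=\sigma_{j,j}=\mathcal{E}_\Sigma(g_j,g_j)$, so the zeroth-order terms cancel, and sending $\varepsilon\to 0^{\pm}$ forces equality of the linear coefficients: $\operatorname{Cov}_\mu(g_j,f)=\mathcal{E}_\Sigma(g_j,f)$ for all $f\in\mathcal{S}(\bbr^d)$. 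This is precisely the Stein identity you were after, obtained for an explicit dense class of test functions with no closability, no semigroup, and no domain issues. Your Cauchy--Schwarz equality-case computation is a nice way to see that $g_j$ is extremal for the functional $U_\Sigma$, but the perturbation argument extracts the same information directly from the definition of $U_\Sigma(\mu)=1$ and is the missing idea that makes the proof go through under the lemma's minimal hypotheses.
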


\begin{proof}
The sufficiency is a direct consequence of Lemma \ref{lem:poincare_gaussian} or of Remark \ref{rem:what_about_the_degenerate_case}.~Thus, let us prove the direct implication.  Assume that $U_{\Sigma}(\mu) = 1$.~Then, for all $f \in \mathcal{H}_\Sigma(\mu)$, 
\begin{align}\label{ineq:Poincar}
\operatorname{Var}_{\mu}(f) \leq \int_{\bbr^d} \Gamma_{\Sigma}(f,f)(x) \mu(dx) = : \mathcal{E}_{\Sigma}(f,f), 
\end{align}
with, for all $x \in \bbr^d$,
\begin{align*}
\Gamma_{\Sigma}(f,f)(x) = \langle \nabla(f)(x); \Sigma (\nabla(f)(x)) \rangle = \left\|\sqrt{\Sigma} (\nabla(f)(x))\right\|^2.
\end{align*}
Now,  for all $j \in \{1, \dots, d\}$ and all $\varepsilon \in \bbr$ with $\varepsilon\ne 0$, let $f_j$ be defined, for all $x \in \bbr^d$, by 
\begin{align*}
f_j(x) = g_j(x) + \varepsilon f(x),
\end{align*}
for some $f \in \mathcal{S}(\bbr^d)$.~Then, for all $j \in \{1, \dots, d\}$, 
\begin{align*}
\operatorname{Var}_{\mu}(f_j) = \operatorname{Cov}_{\mu}(g_j + \varepsilon f,g_j + \varepsilon f) = \operatorname{Var}_{\mu}(g_j) + 2 \varepsilon   \operatorname{Cov}_{\mu}(g_j,f) + \varepsilon^2 \operatorname{Var}_{\mu}(f), 
\end{align*}
and, 
\begin{align*}
\mathcal{E}_{\Sigma}(f_j,f_j) = \mathcal{E}_{\Sigma}(g_j,g_j) + 2 \varepsilon \mathcal{E}_{\Sigma}(g_j,f) + \varepsilon^2 \mathcal{E}_{\Sigma}(f,f).
\end{align*}
(here and in the sequel, $\operatorname{Cov}_{\mu}(f,g)$ indicates the covariance of $f$ and of $g$ under $\mu$).~Thus, thanks to \eqref{ineq:Poincar}, for all $j \in \{1, \dots, d\}$, 
\begin{align*} 
\operatorname{Cov}_{\mu}(g_j,f) = \mathcal{E}_{\Sigma}(g_j,f).
\end{align*}
Namely, for all $j \in \{1, \dots, d\}$ and all $f \in \mathcal{S}(\bbr^d)$,
\begin{align*}
\bbe X_j f(X) = \bbe \langle \Sigma(e_j) ; \nabla(f)(X) \rangle,  \quad X \sim \mu,
\end{align*}
with $e_j  = (0, \dots, 0, 1, 0,\dots,0)^T$. The end of the proof follows easily by a standard argument involving the characteristic function. Indeed, by Fourier inversion and duality, for all 
$j \in \{1, \dots, d\}$ and all $\xi \in \bbr^d$,
\begin{align*}
\partial_{\xi_j} \left( \varphi_\mu \right)(\xi) = - \langle \Sigma(e_j) ; \xi \rangle \varphi_\mu(\xi), 
\end{align*}
where $\varphi_\mu$ is the characteristic function of $\mu$ which is $\mathcal{C}^1$ on $\bbr^d$ since $\mu$ has finite second moment.  Then,  for all $\xi \in \bbr^d$, 
\begin{align*}
\langle \xi ; \nabla(\varphi_\mu)(\xi) \rangle = - \langle \xi ; \Sigma(\xi) \rangle \varphi_\mu(\xi).
\end{align*}
Passing to spherical coordinates, for all $(r, \theta) \in (0, +\infty) \times \mathbb{S}^{d-1}$, 
\begin{align*}
\partial_r \left(\varphi_\mu \right)(r\theta) = -r \langle \theta ; \Sigma(\theta) \rangle \varphi_\mu(r \theta).  
\end{align*}
Fixing $\theta \in \mathbb{S}^{d-1}$,  integrating with respect to $r$ and using $\varphi_\mu(0) = 1$,  for all $(r, \theta) \in (0, +\infty) \times \mathbb{S}^{d-1} $, 
\begin{align*}
\varphi_\mu(r\theta) = \exp \left(- \frac{r^2}{2} \langle \theta ; \Sigma(\theta) \rangle\right).
\end{align*}
This concludes the proof of the lemma. 
\end{proof}
\noindent
Before moving to the proof of the stability result, let us recall some well-known facts about  Stein's method for the multivariate Gaussian probability measure $\gamma_{\Sigma}$ on $\bbr^d$. The standard references are \cite{Stein2,Bar90,Go1991,GR96,RR96,Rai04,ChM08,RR09,Mec09,NPR10,Shih11,NP12,R13,BC20,NPY21}.~In the sequel, let $h \in \mathcal{C}^{\infty}_c(\bbr^d)$ be such that
\begin{align*}
\|h\|_{\operatorname{Lip}} := \underset{x,y \in \bbr^d,\,  x \ne y}{ \sup} \dfrac{|h(x) - h(y)|}{\|x -y\|} = \underset{x \in \bbr^d}{\sup} \|\nabla(h)(x)\| \leq 1, 
\end{align*}
and let $f_h$ be defined, for all $x \in \bbr^d$, by
\begin{align}\label{eq:solution_stein_equation}
f_h(x) = - \int_{0}^{+\infty} \left(P^{\Sigma}_t(h)(x) - \bbe h(X)\right) dt, \quad X \sim \gamma_\Sigma. 
\end{align}
The next lemma recalls regularity results for $f_h$ as well as a representation formula for its Hessian matrix which allows to obtain dimension free bounds for the supremum norms involving the operator or the Hilbert-Schmidt norms of $\operatorname{Hess}(f_h)$.  

\begin{lem}\label{lem:Stein_Factors_anisotropic_Gaussian}
Let $d \geq 1$ and let $\Sigma$ be a nondegenerate $d \times d$ covariance matrix.~Let $h \in \mathcal{C}^{\infty}_c(\bbr^d)$ be such that $\|h\|_{\operatorname{Lip}} \leq 1$ and let $f_h$ be given by \eqref{eq:solution_stein_equation}.~Then, $f_h$ is well-defined, twice continuously differentiable on $\bbr^d$, and 
\begin{align*}
\underset{x \in \bbr^d}{\sup} \|\nabla(f_h)(x)\| \leq 1, \quad \underset{x \in \bbr^d}{\sup} \|\operatorname{Hess}(f_h)(x)\|_{op} \leq  \sqrt{\frac{2}{\pi}} \|\Sigma^{-\frac{1}{2}}\|_{op},\quad  \underset{x \in \bbr^d}{\sup} \|\operatorname{Hess}(f_h)(x)\|_{HS} \leq \|\Sigma^{-\frac{1}{2}}\|_{op}.
\end{align*}
Moreover, if $h \in \mathcal{C}^{\infty}_c(\bbr^d)$ is such that 
\begin{align*}
\|h\|_{\operatorname{Lip}} \leq 1, \quad \underset{x \in \bbr^d}{\sup}\|\operatorname{Hess}(h)(x)\|_{op} \leq 1,
\end{align*}
then, 
\begin{align*}
& \underset{x \in \bbr^d}{\sup} \|\nabla(f_h)(x)\| \leq 1, \quad \underset{x \in \bbr^d}{\sup} \|\operatorname{Hess}(f_h)(x)\|_{op} \leq  \frac{1}{2}.
\end{align*}
Finally,  if
$h \in \mathcal{C}^{\infty}_c(\bbr^d)$ is such that 
\begin{align*}
\|h\|_{\operatorname{Lip}} \leq 1, \quad \tilde{M}_2(h) := \underset{x \in \bbr^d}{\sup}\|\operatorname{Hess}(h)(x)\|_{HS} \leq 1,
\end{align*}
then, 
\begin{align*}
& \underset{x \in \bbr^d}{\sup} \|\nabla(f_h)(x)\| \leq 1, \quad \underset{x \in \bbr^d}{\sup} \|\operatorname{Hess}(f_h)(x)\|_{HS} \leq \frac{1}{2}.
\end{align*}
\end{lem}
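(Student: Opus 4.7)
The plan is to base every bound on two ingredients: the Mehler representation $P_t^\Sigma h(x) = \bbe[h(xe^{-t} + \sqrt{1-e^{-2t}}\,Y)]$ with $Y \sim \gamma_\Sigma$, and Gaussian integration by parts. Well-definedness of $f_h$ follows from the pointwise bound $|P_t^\Sigma h(x) - \bbe h(X)| \leq \|h\|_{\operatorname{Lip}}\bigl(e^{-t}\|x\| + (1-\sqrt{1-e^{-2t}})\,\bbe\|Y\|\bigr)$, whose right-hand side is $t$-integrable on $(0,\infty)$, so that $f_h$ is well-defined and smooth and differentiation under the integral sign is legitimate. The gradient bound, common to all three regimes, is immediate from the commutation formula $\nabla P_t^\Sigma h(x) = e^{-t}\,\bbe[\nabla h(Z)]$, with $Z = xe^{-t} + \sqrt{1-e^{-2t}}Y$: Jensen yields $\|\nabla P_t^\Sigma h(x)\| \leq e^{-t}\|h\|_{\operatorname{Lip}}$, and since $\nabla f_h = -\int_0^\infty \nabla P_t^\Sigma h\,dt$, integration produces $\|\nabla f_h\|_\infty \leq 1$.

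For the Hessian bound in the purely Lipschitz regime, I would derive a Bismut-type identity. Rewriting $P_t^\Sigma h(x) = \int h(z)\,p_\Sigma\bigl((z-xe^{-t})/\sqrt{1-e^{-2t}}\bigr)\,(1-e^{-2t})^{-d/2}dz$ and differentiating the density once in $x$, thereby producing the Gaussian score $-\Sigma^{-1}y$, yields $\nabla P_t^\Sigma h(x) = \tfrac{e^{-t}}{\sqrt{1-e^{-2t}}}\,\bbe[h(Z)\,\Sigma^{-1}Y]$. Differentiating once more in $x$, now with the derivative hitting $h$ via the chain rule, produces the key identity
\begin{align*}
\operatorname{Hess}(P_t^\Sigma h)(x) = \frac{e^{-2t}}{\sqrt{1-e^{-2t}}}\,\bbe\bigl[\nabla h(Z)\,(\Sigma^{-1}Y)^T\bigr].
\end{align*}
A substitution $u = 1-e^{-2t}$ gives $\int_0^\infty e^{-2t}/\sqrt{1-e^{-2t}}\,dt = 1$, so the two Lipschitz-regime bounds reduce to uniformly controlling the spatial factor in the appropriate matrix norm.

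For the operator norm, I would use $\|A\|_{op} = \sup_{\|u\|=\|v\|=1} u^T A v$ and bound $|\bbe\langle u,\nabla h(Z)\rangle\langle v,\Sigma^{-1}Y\rangle| \leq \|\nabla h\|_\infty\,\bbe|\langle v,\Sigma^{-1}Y\rangle|$; the inner random variable is centered Gaussian of variance $v^T\Sigma^{-1}v \leq \|\Sigma^{-1/2}\|_{op}^2$, hence its $L^1$-norm is at most $\sqrt{2/\pi}\,\|\Sigma^{-1/2}\|_{op}$. For the Hilbert-Schmidt norm, I would exploit the duality $\|A\|_{HS} = \sup_{\|M\|_{HS}\leq 1}\operatorname{Tr}(M^T A)$ to rewrite the spatial factor as $\sup_{\|M\|_{HS}\leq 1}\bbe\langle \nabla h(Z),\,M\Sigma^{-1}Y\rangle$, then apply Cauchy-Schwarz in the probability space; the factor $\bbe\|M\Sigma^{-1}Y\|^2 = \|M\Sigma^{-1/2}\|_{HS}^2 \leq \|M\|_{HS}^2\,\|\Sigma^{-1/2}\|_{op}^2$ delivers exactly $\|\Sigma^{-1/2}\|_{op}$.

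For the two regimes in which $h$ has a bounded Hessian, I would differentiate the commutation formula $\nabla P_t^\Sigma h(x) = e^{-t}\,\bbe[\nabla h(Z)]$ once more in $x$ directly, using only the chain rule and no Gaussian IBP, to obtain $\operatorname{Hess}(P_t^\Sigma h)(x) = e^{-2t}\,\bbe[\operatorname{Hess}(h)(Z)]$; Jensen in the chosen matrix norm gives $\|\operatorname{Hess}(P_t^\Sigma h)(x)\|_\star \leq e^{-2t}$, and $\int_0^\infty e^{-2t}\,dt = 1/2$ closes both refined bounds. The main obstacle is the Hilbert-Schmidt estimate on the expected outer product $\bbe[\nabla h(Z)(\Sigma^{-1}Y)^T]$: a coordinate-wise Cauchy-Schwarz produces a dimension-dependent factor $\sqrt{\operatorname{Tr}(\Sigma^{-1})}$, and the dimension-free bound $\|\Sigma^{-1/2}\|_{op}$ is only recovered through the matrix-norm duality argument just described.
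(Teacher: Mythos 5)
Your proposal is correct and follows essentially the same route as the paper: the same Mehler/commutation arguments for well-definedness and the gradient bound, the same Bismut representation $\operatorname{Hess}(P_t^\Sigma h)(x)=\tfrac{e^{-2t}}{\sqrt{1-e^{-2t}}}\,\bbe[\Sigma^{-1}Y\,(\nabla h(Z))^T]$ (you derive it by differentiating the kernel directly, the paper by commuting twice and then integrating by parts once — the same identity), and the same operator-norm and Hilbert--Schmidt duality estimates, with the refined $1/2$ bounds following from $\operatorname{Hess}(P_t^\Sigma h)=e^{-2t}P_t^\Sigma(\operatorname{Hess}h)$ and $\int_0^\infty e^{-2t}\,dt=1/2$ exactly as intended.
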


\begin{proof}
Thanks to the Mehler formula, for all $t \geq 0$ and all $x \in \bbr^d$, 
\begin{align*}
\left| P^{\Sigma}_t(h)(x) - \bbe h(X)\right| & \leq \int_{\bbr^d} \left| h\left(xe^{-t} + \sqrt{1-e^{-2t}} y\right) - h(y) \right| \gamma_\Sigma(dy),\\
& \leq \|h\|_{\operatorname{Lip}} \left(e^{-t} \|x\| + \left| 1 - \sqrt{1-e^{-2t}}\right| \int_{\bbr^d} \|y\| \gamma_{\Sigma}(dy) \right). 
\end{align*}
The right-hand side of the previous inequality is clearly integrable, with respect to $t$, on $(0,+\infty)$. Thus, $f_h$ is well-defined on $\bbr^d$. The fact that $f_h$ is twice continuously differentiable on $\bbr^d$ follows from the commutation formula $\nabla P_t^{\Sigma}(h) = e^{-t} P_t^{\Sigma}(\nabla(h))$.  Now, for all $x \in \bbr^d$, 
\begin{align*}
\nabla(f_h)(x) = - \int_0^{+\infty} e^{-t} P_t^{\Sigma}(\nabla(h))(x) dt.
\end{align*}
Thus, for all $u \in \bbr^d$ such that $\|u\| = 1$, 
\begin{align*}
\langle \nabla(f_h)(x) ; u \rangle  =  - \int_0^{+\infty} e^{-t} P_t^{\Sigma}(\langle u ;  \nabla(h) \rangle)(x) dt. 
\end{align*}
Then, for all $x \in \bbr^d$ and all $u \in \bbr^d$ such that $\|u\| = 1$,
\begin{align*}
\left|\langle \nabla(f_h)(x) ; u \rangle\right| \leq \left(\int_0^{+\infty} e^{-t}dt \right) \|h\|_{\operatorname{Lip}} \leq 1. 
\end{align*}
Next, let us deal with the Hessian matrix of $f_h$.  For all $k,\ell \in \{1, \dots, d\}$ and all $x \in \bbr^d$, 
\begin{align*}
\partial_k \partial_\ell \left(f_h\right)(x) = - \int_0^{+\infty} e^{-2t} P^{\Sigma}_t(\partial_k \partial_\ell(h))(x)dt. 
\end{align*}
Moreover, thanks to Bismut's formula,  for all $k,\ell \in \{1, \dots, d\}$, all $x \in \bbr^d$ and all $t \geq 0$, 
\begin{align*}
P^{\Sigma}_t(\partial_k \partial_\ell(h))(x) & = \int_{\bbr^d} \partial_k \partial_\ell(h)\left(x e^{-t} + \sqrt{1 -e^{-2t}} y\right) \gamma_{\Sigma}(dy) \\
& = \dfrac{1}{\sqrt{1-e^{-2t}}} \int_{\bbr^d} \langle \Sigma^{-1}(e_{\ell}) ; y \rangle \partial_{k}(h)\left(x e^{-t} + \sqrt{1 -e^{-2t}} y\right) \gamma_{\Sigma}(dy), 
\end{align*}
and so, for all $\ell, k \in \{1, \dots, d\}$ and all $x \in \bbr^d$, 
\begin{align*}
\partial_k \partial_\ell \left(f_h\right)(x) = - \int_0^{+\infty} \dfrac{e^{-2t}}{\sqrt{1-e^{-2t}}} \left(\int_{\bbr^d} \langle \Sigma^{-1}(e_{\ell}) ; y \rangle \partial_{k}(h)\left(x e^{-t} + \sqrt{1 -e^{-2t}} y\right) \gamma_{\Sigma}(dy) \right) dt .
\end{align*}
Thus, for all $x \in \bbr^d$, 
\begin{align}\label{eq:rep_Hessian_Bismut}
\operatorname{Hess}(f_h)(x) = - \int_{0}^{+\infty} \dfrac{e^{-2t}}{\sqrt{1-e^{-2t}}} \int_{\bbr^d} \Sigma^{-1}(y) \left(\nabla(h) \left(x e^{-t} + \sqrt{1 -e^{-2t}} y\right) \right)^T \gamma_{\Sigma}(dy)  dt.
\end{align}
Now, let $u,v \in \bbr^d$ be such that $\|u\| = \|v\| = 1$. Then, for all $x \in \bbr^d$, 
\begin{align*}
\left| \langle \operatorname{Hess}(f_h)(x)u ; v \rangle \right| & = \left| \int_0^{+\infty} \dfrac{e^{-2t}}{\sqrt{1-e^{-2t}}} \left( \int_{\bbr^d} \langle \Sigma^{-1}(y) ; v \rangle \langle \nabla(h)\left( xe^{-t} + \sqrt{1-e^{-2t}} y \right) ; u \rangle \gamma_{\Sigma}(dy) \right) dt \right| , \\
& \leq \left( \int_{0}^{+\infty} \dfrac{e^{-2t}}{\sqrt{1-e^{-2t}}} dt \right) \int_{\bbr^d} \left| \langle \Sigma^{-1}(y)  ; v \rangle \right| \gamma_{\Sigma}(dy) , \\
& \leq \int_{\bbr^d} \left| \langle \Sigma^{-\frac{1}{2}}(y)  ; v \rangle \right| \gamma(dy), \\
& \leq \|\Sigma^{-\frac{1}{2}}\|_{op} \left(\int_{\bbr}|x| e^{-\frac{x^2}{2}} \frac{dx}{\sqrt{2\pi}} \right) , \\
& \leq \sqrt{\frac{2}{\pi}} \|\Sigma^{-\frac{1}{2}}\|_{op},
\end{align*}
where $\gamma$ is the standard Gaussian measure on $\bbr^d$ (i.e., with covariance matrix given by $I_d$) and since, under $\gamma$,  for all $u \in \mathbb{S}^{d-1}$, $\langle \Sigma^{-\frac{1}{2}}(y) ; u \rangle$ is a centered normal random variable with variance $\|\Sigma^{-\frac{1}{2}}(u)\|^2$. It remains to estimate the Hilbert-Schmidt norm of $\operatorname{Hess}(f_h)(x)$ based on \eqref{eq:rep_Hessian_Bismut}.  By a similar argument and using H\"older's inequality for Schatten norms,  for all $x \in \bbr^d$,
\begin{align*}
\| \operatorname{Hess}(f_h)(x)\|_{HS} \leq \underset{A \in \mathcal{M}_{d \times d}(\bbr), \, \|A\|_{HS}=1}{\sup} \|A \Sigma^{-\frac{1}{2}}\|_{HS} \leq \|\Sigma^{-\frac{1}{2}}\|_{op},
\end{align*}
where $\mathcal{M}_{d \times d}(\bbr)$ denotes the set of $d\times d$ matrices with real coefficients.
\end{proof}

\begin{rem}\label{rem:bound}
(i) To the best of our knowledge, the bound,
\begin{align*}
 \underset{x \in \bbr^d}{\sup} \|\operatorname{Hess}(f_h)(x)\|_{HS} \leq \|\Sigma^{-\frac{1}{2}}\|_{op},
\end{align*}
for $h \in \mathcal{C}^1(\bbr^d)$ with $\| h \|_{\operatorname{Lip}} \leq 1$, is the best available in the literature.~It generalizes the bound obtained in \cite[Lemma $2.2$]{ChM08} for the isotropic case, amends \cite[Proof of Lemma $2$]{Mec09} and improves on the bound obtained in \cite[Inequality $(13)$]{NPY21} (see also \cite[Lemma $3.3$]{NPR10}).\\
(ii) Assuming that $d=2$ and that $\Sigma = I_2$, let us compute the quantity 
\begin{align*}
J(A):=\int_{\bbr^2} \|A(y)\| \gamma(dy),
\end{align*}
for some specific values of $A$, a $2 \times 2$ matrix with $\|A\|_{HS} = 1$.~Take,  for instance $A = I_2/\sqrt{2}$.~Then, by standard computations using polar coordinates, 
\begin{align*}
J \left(\frac{1}{\sqrt{2}} I_2 \right)= \frac{1}{\sqrt{2}} \int_{\bbr^2} \|y\| \exp\left(- \frac{ \|y\|^2}{2}\right) \frac{dy}{2\pi} = \frac{\sqrt{\pi}}{2},
\end{align*}
which seems to question the bound obtained in \cite[Proof of Lemma $2$, page $161$]{Mec09}.
\end{rem}

\noindent
Next, let us discuss the notion and the existence of Stein's kernels with respect to the Gaussian probability measure $\gamma_{\Sigma}$.  The idea is the following: let $\mu$ be a probability measure on $\bbr^d$ with finite second moment such that $\int_{\bbr^d} x  \mu(dx) = 0$ and such that $\operatorname{Cov}(X_\mu,X_\mu) = \Sigma$, where $X_\mu \sim \mu$. Moreover, assume that there exists $\tau_\mu$, a function defined on $\bbr^d$ with values in $\mathcal{M}_{d \times d}(\bbr)$, such that, for all appropriate vector-valued 
functions $f$ defined on $\bbr^d$, 
\begin{align}\label{eq:Stein_Kernel_multi}
\int_{\bbr^d} \langle  \tau_\mu(x) ;  \nabla(f)(x)  \rangle_{HS} \mu(dx) = \int_{\bbr^d} \langle x ; f(x) \rangle \mu(dx).  
\end{align}
In the vector-valued case, $\nabla(f)$ denotes the Jacobian matrix of $f$.  Then, the classical argument for bounding distances goes as follows:  let $h \in \mathcal{C}^{1}(\bbr^d)$ be such that $\|h\|_{\operatorname{Lip}} \leq 1$, and let $f_h$ be given by \eqref{eq:solution_stein_equation}. (Actually, in finite dimension, one can take, without loss of generality, $h\in \mathcal{C}_c^{\infty}(\bbr^d)$; the main point being that $\|h\|_{\operatorname{Lip}} \leq 1$.) Then,  $f_h$ is a strong solution to the following partial differential equation: for all $x \in \bbr^d$, 
\begin{align*}
-\langle x ; \nabla\left(f_h\right)(x) \rangle + \langle \Sigma ; \operatorname{Hess}\left(f_h\right)(x) \rangle_{HS}  = h(x) - \bbe h(X), \quad X \sim \gamma_{\Sigma}. 
\end{align*}
Integrating with respect to $\mu$ and using the formal definition of $\tau_\mu$ give
\begin{align*}
\left| \bbe h(X_\mu)  - \bbe h(X) \right|  & =  \left| \bbe \left( -\langle X_\mu ; \nabla\left(f_h\right)(X_\mu) \rangle + \langle \Sigma ; \operatorname{Hess}\left(f_h\right)(X_\mu) \rangle_{HS} \right) \right| ,  \\
&  =\left| \bbe \left(  \langle \Sigma - \tau_\mu(X_\mu) ; \operatorname{Hess}\left(f_h\right)(X_\mu) \rangle_{HS} \right) \right|,
\end{align*}
with $X_\mu \sim \mu$. Then, by the Cauchy-Schwarz inequality and the bound obtained in Lemma \ref{lem:Stein_Factors_anisotropic_Gaussian}, 
\begin{align}\label{ineq:Stein_bound}
\left| \bbe h(X_\mu)  - \bbe h(X) \right|  \leq  \|\Sigma^{-\frac{1}{2}}\|_{op} \left(\bbe\left(  \|\tau_\mu(X_\mu) - \Sigma \|^2_{HS} \right)\right)^{\frac{1}{2}}.
\end{align}
Observe that the right-hand side of the previous inequality does not depend on $h$ anymore.~In the sequel, let us explain how to prove the existence of $\tau_\mu$ and how to bound the Stein discrepancy based on closed forms techniques. For this purpose, let us consider the following bilinear symmetric non-negative definite form defined, for all $f,g \in \mathcal{C}_c^{\infty}(\bbr^d, \bbr^d)$, by
\begin{align*}
\mathcal{E}_{\Sigma, \mu} (f,g) = \int_{\bbr^d} \langle \Sigma \left(\nabla(g)(x)\right) ; \nabla(f)(x) \rangle_{HS} \mu(dx),
\end{align*}
where $\Sigma$ is a nondegenerate covariance matrix and where $\mu$ is a probability measure on $\bbr^d$ with finite second moment such that 
\begin{align*}
\int_{\bbr^d} x \mu(dx) = 0 , \quad \int_{\bbr^d} x x^T \mu(dx) = \Sigma. 
\end{align*}

\begin{prop}\label{prop:existence_stein_kernel}
Let $d \geq 1$ and let $\Sigma$ be a nondegenerate $d \times d$ covariance matrix.~Let $\mu$ be a probability measure on $\bbr^d$ with finite second moment such that 
\begin{align*}
\int_{\bbr^d} x \mu(dx) = 0 , \quad \int_{\bbr^d} x x^T \mu(dx) = \Sigma. 
\end{align*}
Let the form $\left( \mathcal{E}_{\Sigma, \mu} , \mathcal{C}_c^{\infty}(\bbr^d, \bbr^d) \right)$ be closable.~Finally, let there exists $U_{\Sigma,\mu}>0$ such that for all $f \in \mathcal{C}^{\infty}_c(\bbr^d, \bbr^d)$ with $\int_{\bbr^d} f(x) \mu(dx) = 0$, 
\begin{align}\label{ineq:Poincare_type_inequality}
\int_{\bbr^d} \|f(x)\|^2 \mu(dx) \leq U_{\Sigma,\mu} \int_{\bbr^d} \langle \Sigma \left(\nabla(f)(x)\right) ; \nabla(f)(x) \rangle_{HS} \mu(dx). 
\end{align} 
Then, there exists $\tau_\mu$ such that, for all $f \in \mathcal{D}(\mathcal{E}_{\Sigma, \mu})$, 
\begin{align*}
\int_{\bbr^d} \langle x ; f(x) \rangle \mu(dx) = \int_{\bbr^d} \langle \nabla(f)(x) ; \tau_\mu(x) \rangle_{HS} \mu(dx).
\end{align*}
Moreover, 
\begin{align*}
\int_{\bbr^d} \| \tau_\mu(x)\|^2_{HS} \mu(dx) \leq U_{\Sigma,\mu} \left\| \Sigma \right\|^2_{HS}.
\end{align*}
\end{prop}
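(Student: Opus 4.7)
The plan is to apply Theorem \ref{thm:cov_rep_dirichlet_form} in the separable real Hilbert space $H = L^2(\bbr^d, \mu; \bbr^d)$ of $\mu$-square integrable vector fields (with its natural inner product $\langle u, v\rangle_H := \int_{\bbr^d}\langle u(x); v(x)\rangle \mu(dx)$), taking as $\mathcal{E}$ the closure of $(\mathcal{E}_{\Sigma,\mu}, \mathcal{C}_c^{\infty}(\bbr^d, \bbr^d))$ guaranteed by the closability hypothesis, and as $H_0$ the closed subspace of $\mu$-centered vector fields. The Poincar\'e-type assumption \eqref{ineq:Poincare_type_inequality} translates, via the standard Gronwall argument on $t \mapsto \|P_t u\|_H^2$, into $\|P_t u\|_H \leq e^{-t/U_{\Sigma,\mu}} \|u\|_H$ for all $u \in H_0$, so the function $\psi(t) = e^{-t/U_{\Sigma,\mu}}$ is admissible. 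Since $\int_{\bbr^d}x\,\mu(dx) = 0$, the vector field $u_0\colon x \mapsto x$ lies in $H_0$ with $\|u_0\|_H^2 = \operatorname{Tr}(\Sigma)$; therefore $g := G_{0^+}(u_0) \in \mathcal{D}(\mathcal{E}_{\Sigma,\mu}) \cap H_0$ and
\begin{align*}
\mathcal{E}_{\Sigma,\mu}(g, f) = \int_{\bbr^d}\langle x; f(x)\rangle \mu(dx), \qquad f \in \mathcal{D}(\mathcal{E}_{\Sigma,\mu}).
\end{align*}
Setting $\tau_\mu(x) := \Sigma\,\nabla(g)(x)$ and unfolding the definition of $\mathcal{E}_{\Sigma,\mu}$ using the symmetry of $\Sigma$ yields
\begin{align*}
\int_{\bbr^d}\langle \nabla(f)(x); \tau_\mu(x)\rangle_{HS}\,\mu(dx) = \mathcal{E}_{\Sigma,\mu}(f, g) = \mathcal{E}_{\Sigma,\mu}(g, f) = \int_{\bbr^d}\langle x; f(x)\rangle \mu(dx),
\end{align*}
which is exactly the Stein identity claimed in the statement.

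The key trick for the quadratic estimate is to feed $f := \Sigma g$ into the Riesz identity. Since multiplication by $\Sigma$ is a bounded linear map on $\bbr^d$, one has $\Sigma g \in \mathcal{D}(\mathcal{E}_{\Sigma,\mu})$ with $\nabla(\Sigma g) = \Sigma\nabla(g)$, and therefore
\begin{align*}
\int_{\bbr^d}\|\tau_\mu(x)\|_{HS}^2\,\mu(dx) &= \int_{\bbr^d}\operatorname{Tr}\bigl(\nabla(g)^T\Sigma^2\nabla(g)\bigr)\,\mu(dx) = \mathcal{E}_{\Sigma,\mu}(g, \Sigma g) \\
&= \int_{\bbr^d}\langle x; \Sigma g(x)\rangle\,\mu(dx) = \int_{\bbr^d}\langle \Sigma^{1/2}x; \Sigma^{1/2}g(x)\rangle\,\mu(dx).
\end{align*}
The Cauchy-Schwarz inequality together with $\int\|\Sigma^{1/2}x\|^2\,\mu(dx) = \operatorname{Tr}(\Sigma^2) = \|\Sigma\|_{HS}^2$ then gives
\begin{align*}
\int_{\bbr^d}\|\tau_\mu(x)\|_{HS}^2\,\mu(dx) \leq \|\Sigma\|_{HS}\left(\int_{\bbr^d}\|\Sigma^{1/2}g(x)\|^2\,\mu(dx)\right)^{\frac{1}{2}}.
\end{align*}

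To close the loop, observe that $\Sigma^{1/2}g$ is again $\mu$-centered, belongs to $\mathcal{D}(\mathcal{E}_{\Sigma,\mu})$, and has Jacobian $\Sigma^{1/2}\nabla(g)$, so the Poincar\'e-type inequality \eqref{ineq:Poincare_type_inequality} applied to $\Sigma^{1/2}g$ produces
\begin{align*}
\int_{\bbr^d}\|\Sigma^{1/2}g(x)\|^2\,\mu(dx) \leq U_{\Sigma,\mu}\int_{\bbr^d}\operatorname{Tr}\bigl(\nabla(g)^T\Sigma^2\nabla(g)\bigr)\,\mu(dx) = U_{\Sigma,\mu}\int_{\bbr^d}\|\tau_\mu(x)\|_{HS}^2\,\mu(dx).
\end{align*}
Inserting this back into the Cauchy-Schwarz estimate, squaring, and cancelling a factor of $\int\|\tau_\mu\|_{HS}^2\,d\mu$ yields the announced bound $\int\|\tau_\mu\|_{HS}^2\,d\mu \leq U_{\Sigma,\mu}\|\Sigma\|_{HS}^2$. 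The one genuine technical point of the proof, which I would handle via closability and a density argument, is to check that $\Sigma g$ and $\Sigma^{1/2}g$ genuinely sit inside $\mathcal{D}(\mathcal{E}_{\Sigma,\mu})$ and that the Poincar\'e inequality \eqref{ineq:Poincare_type_inequality}, formulated on $\mathcal{C}_c^{\infty}(\bbr^d, \bbr^d)$, extends to these bounded-matrix images of $g$; both are routine consequences of the assumed closability of $(\mathcal{E}_{\Sigma,\mu}, \mathcal{C}_c^{\infty}(\bbr^d, \bbr^d))$.
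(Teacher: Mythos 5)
Your proof is correct and follows essentially the same route as the paper: both apply Theorem \ref{thm:cov_rep_dirichlet_form} (via the exponential $L^2$-decay obtained from \eqref{ineq:Poincare_type_inequality} by the Gronwall argument) to the vector field $x \mapsto x$, set $\tau_\mu = \Sigma \nabla\left(G_{0^+}(x)\right)$, and derive the quadratic bound by testing the Riesz identity against $\Sigma G_{0^+}(x)$ and then applying the Cauchy--Schwarz inequality. Your write-up simply makes explicit the final step (the Poincar\'e inequality applied to $\Sigma^{1/2} G_{0^+}(x)$ followed by cancellation) that the paper compresses into ``by the Cauchy--Schwarz inequality''.
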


\begin{proof}
First, let us build the Stein's kernel $\tau_\mu$. Since the form $\left( \mathcal{E}_{\Sigma, \mu} , \mathcal{C}_c^{\infty}(\bbr^d, \bbr^d) \right)$ is closable, consider its smallest closed extension denoted by $\left(\mathcal{E}_{\Sigma, \mu} , \mathcal{D}(\mathcal{E}_{\Sigma, \mu})\right)$,  where $\mathcal{D}(\mathcal{E}_{\Sigma, \mu})$ is its dense linear domain.~Moreover, let $\mathcal{L}^{\mu}$, $(G^{\mu}_{\delta})_{\delta >0}$ and $(P^\mu_t)_{t \geq 0}$ be the corresponding generator, the strongly continuous resolvent and the strongly continuous semigroup.~In particular, recall that, for all $f \in L^2(\bbr^d, \bbr^d, \mu)$ and all $\delta >0 $, 
\begin{align*}
G^{\mu}_\delta(f)  = \int_0^{+\infty} e^{- \delta t} P^{\mu}_t(f)dt. 
\end{align*}
Next, let $f \in \mathcal{C}^{\infty}_c(\bbr^d, \bbr^d)$ be such that $\int_{\bbr^d} f(x) \mu(dx) = 0$. Then, by the very definition of the generator $\mathcal{L}^{\mu}$ and an integration by parts, 
\begin{align*}
\dfrac{d}{dt} \left( \bbe \|P^\mu_t(f)(X_\mu)\|^2\right) & = 2 \bbe \langle P^{\mu}_t(f)(X_\mu) ; \mathcal{L}^{\mu} P^{\mu}_t(f)(X_\mu) \rangle = -2 \mathcal{E}_{\Sigma, \mu} \left(P^{\mu}_t(f),P^{\mu}_t(f)\right) \\
&  \leq -\frac{2}{U_{\Sigma,\mu}} \bbe \|P^\mu_t(f)(X_\mu)\|^2,
\end{align*}
with $X_\mu \sim \mu$. Then, for all $f \in \mathcal{C}^{\infty}_c(\bbr^d,\bbr^d)$ such that $\int_{\bbr^d} f(x) \mu(dx) = 0 $, 
\begin{align*}
\left\|  P^{\mu}_t(f) \right\|^2_{L^2(\bbr^d, \bbr^d, \mu)} \leq \exp \left(- \dfrac{2 t}{U_{\Sigma,\mu}}\right) \left\| f \right\|^2_{L^2(\bbr^d, \bbr^d, \mu)},
\end{align*}
which, via a density argument, clearly extends to all $f \in L^2(\bbr^d, \bbr^d, \mu)$ with $\int_{\bbr^d} f(x) \mu(dx) = 0$.~Then, from \cite[Theorem $5.10$]{AH20_3}, for all $g \in L^2(\bbr^d, \bbr^d, \mu)$ such that $\int_{\bbr^d} g(x) \mu(dx) = 0$ and all $f \in \mathcal{D}(\mathcal{E}_{\Sigma, \mu})$, 
\begin{align}\label{eq:zero_potential_equation}
\mathcal{E}_{\Sigma,\mu}(G^\mu_{0^+}(g), f) = \int_{\bbr^d} \langle g(x) ;  f(x) \rangle \mu(dx). 
\end{align}
Now, since $\mu$ has finite second moment and since $\int_{\bbr^d} x \mu(dx) = 0$, set $g(x) = x$, for all $x \in \bbr^d$, and so for $\mu$-a.e. $x \in \bbr^d$, 
\begin{align*}
\tau_\mu(x) = \Sigma \nabla \left(G^\mu_{0^+}(g)\right)(x).
\end{align*}
Finally, taking $f(x) = \Sigma G^{\mu}_{0^+}(g)(x)$, $\mu$-a.e. $x \in \bbr^d$, in \eqref{eq:zero_potential_equation} gives
\begin{align*}
\int_{\bbr^d} \| \tau_\mu(x) \|_{HS}^2 \mu(dx) & = \int_{\bbr^d} \langle \Sigma x ; G^{\mu}_{0^+}(g)(x) \rangle \mu(dx) \\
& = \int_{\bbr^d} \langle \Sigma^{\frac{1}{2}} x ; \Sigma^{\frac{1}{2}} G^{\mu}_{0^+}(g)(x) \rangle \mu(dx).
\end{align*}
Then, by the Cauchy-Schwarz inequality, 
\begin{align*}
\int_{\bbr^d} \| \tau_\mu(x) \|^2_{HS} \mu(dx) \leq U_{\Sigma,\mu} \left\| \Sigma \right\|^2_{HS}. 
\end{align*}
This concludes the proof of the proposition. 
\end{proof}

\begin{rem}\label{rem:closability}
(i) Let us analyze the closability assumption on the bilinear form $\left(\mathcal{E}_{\Sigma, \mu} , \mathcal{C}_c^{\infty}(\bbr^d, \bbr^d)\right)$. If $\mu  = \gamma$, then, by the Gaussian integration by parts,  for all $f,g \in \mathcal{C}_c^{\infty}(\bbr^d, \bbr^d)$,
\begin{align*}
\int_{\bbr^d} \langle  \nabla\left(f\right)(x) ; \nabla\left(g\right)(x) \rangle_{HS} \gamma(dx) = \int_{\bbr^d} \langle f(x) ; \left(-\mathcal{L} \right) (g)(x) \rangle \gamma(dx),
\end{align*}
where,  for all $x \in \bbr^d$ and all $j \in \{1, \dots, d\}$,
\begin{align*}
\mathcal{L}(g_j)(x) = -\langle x ; \nabla(g_j)(x) \rangle + \Delta(g_j)(x).
\end{align*}
Now, let $(f_n)_{n \geq 1}$ be a sequence of functions such that, for all $n \geq 1$, $f_n \in \mathcal{C}^{\infty}_c(\bbr^d, \bbr^d)$, $\| f_n \|_{L^2(\bbr^d,\bbr^d, \gamma)} \rightarrow 0$, as $n$ tends to $+\infty$, and $(\nabla(f_n))_{n \geq 1}$ is a Cauchy sequence in $L^2(\bbr^d, \mathcal{H}, \gamma)$ where $(\mathcal{H}, \langle \cdot ; \cdot \rangle_{\mathcal{H}})=\left(\mathcal{M}_{d \times d}(\bbr), \langle \cdot ; \cdot \rangle_{HS}\right)$.  Since $L^2(\bbr^d ,  \mathcal{H} , \gamma)$ is complete,  there exists $F \in L^2(\bbr^d, \mathcal{H}, \gamma)$ such that $\nabla(f_n) \rightarrow F$, as $n$ tends to $+\infty$.  Moreover, for all $\psi \in \mathcal{C}_c^{\infty}(\bbr^d, \bbr^d)$, 
\begin{align*}
\int_{\bbr^d} \langle F ; \nabla(\psi)(x) \rangle_{HS} \gamma(dx) & =  \underset{n \rightarrow +\infty} {\lim} \int_{\bbr^d} \langle \nabla(f_n)(x) ; \nabla(\psi)(x) \rangle_{HS} \gamma(dx) , \\
& = \underset{n \rightarrow +\infty}{\lim} \int_{\bbr^d} \langle f_n(x) ; \left(-\mathcal{L} \right) (\psi)(x) \rangle \gamma(dx), \\
& = \underset{n \rightarrow +\infty}{ \lim} \langle f_n ; (-\mathcal{L})(\psi) \rangle_{L^2(\bbr^d, \bbr^d, \gamma)} = 0. 
\end{align*}
Since this is true for all $\psi \in \mathcal{C}_c^{\infty}(\bbr^d, \bbr^d)$, $F = 0$ in $L^2(\bbr^d , \mathcal{H} , \gamma)$, and therefore, the form is closable. \\
(ii) Let us assume that $\mu(dx) = \psi(x)dx$ where $\psi$ is the positive Radon-Nikodym derivative of $\mu$ with respect to the Lebesgue measure.~Moreover,  let us assume that $\Sigma = I_d$, the $d \times d$ identity matrix, and that $\psi\in \mathcal{C}^1(\bbr^d)$ with
\begin{align}\label{ineq:cond_log_derivative}
\int_{\bbr^d} \left| \dfrac{\partial_j(\psi)(x)}{\psi(x)} \right|^2 \mu(dx) < +\infty, \quad j \in\{1, \dots, d\}. 
\end{align}
Then, by a standard integration by parts, for all $f,g \in \mathcal{C}^{\infty}_{c}(\bbr^d, \bbr^d)$,
\begin{align*}
\int_{\bbr^d} \langle \nabla(f)(x) ; \nabla(g)(x) \rangle_{HS} \mu(dx)  = \int_{\bbr^d} \langle f(x) ; (-  \mathcal{L}^{\psi})(g)(x) \rangle \mu(dx),
\end{align*}
with,  for all $i \in \{1, \dots, d\}$ and all $x \in \bbr^d$,
\begin{align*}
\mathcal{L}^{\psi}(g_i)(x) = \Delta(g_i)(x) +\left\langle \frac{\nabla(\psi)(x)}{\psi(x)} ; \nabla(g_i)(x)  \right\rangle. 
\end{align*}
Finally, reasoning as in $(i)$, one can prove that the form $\left(\mathcal{E}_\mu, \mathcal{C}_c^{\infty}(\bbr^d, \bbr^d)\right)$ is closable since $\psi \in \mathcal{C}^1(\bbr^d)$ and since the condition \eqref{ineq:cond_log_derivative} holds. \\
(iii) In \cite{CF_96} (see also \cite{AR_JFA}), sharper sufficient conditions are put forward which ensure that the form $\left(\mathcal{E}_\mu, \mathcal{C}_c^{\infty}(\bbr^d, \bbr^d)\right)$ is closable.~Indeed, assume that $\mu(dx) = \psi(x)^2 dx$ with $\psi \in H^1_{loc}(\bbr^d,dx)$, where $H^1_{loc}(\bbr^d,dx)$ is the set of functions in $L^2_{loc}(\bbr^d,dx)$ such that their weak gradient belongs to $L^2_{loc}(\bbr^d,dx)$ (here, $L^2_{loc}(\bbr^d,dx)$ is the space of locally square integrable functions on $\bbr^d$).~Then, reasoning as in $(i)$ and $(ii)$, one can prove that the induced form is closable since, for any $K$ compact subset of $\bbr^d$, 
\begin{align*}
\int_{K} \left\| \nabla(\psi)(x) \right\|^2 dx < +\infty. 
\end{align*}
\end{rem}
\noindent
Let us pursue the discussion with a first stability result. 

\begin{thm}\label{thm:stability_result}
Let $d \geq 1$ and let $\Sigma$ be a nondegenerate $d \times d$ covariance matrix. Let $\gamma_{\Sigma}$ be the centered Gaussian probability measure with covariance matrix $\Sigma$ and let $\mu$ be a probability measure on $\bbr^d$ with finite second moment such that 
\begin{align*}
\int_{\bbr^d} x \mu(dx) = 0 , \quad \int_{\bbr^d} x x^T \mu(dx) = \Sigma. 
\end{align*}
Let the form $\left( \mathcal{E}_{\Sigma, \mu} , \mathcal{C}_c^{\infty}(\bbr^d, \bbr^d) \right)$ be closable.~Finally, let there exists $U_{\Sigma,\mu}>0$ such that for all $f \in \mathcal{C}^{\infty}_c(\bbr^d, \bbr^d)$ with $\int_{\bbr^d} f(x) \mu(dx) = 0$, 
\begin{align*}
\int_{\bbr^d} \|f(x)\|^2 \mu(dx) \leq U_{\Sigma,\mu} \int_{\bbr^d} \langle \Sigma \left(\nabla(f)(x)\right) ; \nabla(f)(x) \rangle_{HS} \mu(dx). 
\end{align*} 
Then, 
\begin{align}\label{ineq:W_1_Gauss_Anisotropic}
W_1(\mu , \gamma_\Sigma) \leq \|\Sigma^{-\frac{1}{2}}\|_{op} \| \Sigma \|_{HS} \sqrt{U_{\Sigma,\mu} - 1}. 
\end{align}
\end{thm}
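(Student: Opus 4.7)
My plan is to couple the Stein's method framework developed in the paper with the Stein kernel construction of Proposition \ref{prop:existence_stein_kernel}. By Kantorovich duality,
\begin{align*}
W_1(\mu, \gamma_\Sigma) = \sup\Bigl\{ \bbe h(X_\mu) - \bbe h(X) :\, h\in \mathcal{C}_c^\infty(\bbr^d),\ \|h\|_{\operatorname{Lip}}\leq 1\Bigr\},
\end{align*}
with $X_\mu \sim \mu$ and $X \sim \gamma_\Sigma$. Fix such an $h$ and let $f_h$ be the Stein solution defined in \eqref{eq:solution_stein_equation}. A direct computation along the Ornstein-Uhlenbeck semigroup shows that $f_h$ solves the Stein equation
\begin{align*}
-\langle x; \nabla(f_h)(x)\rangle + \langle \Sigma; \operatorname{Hess}(f_h)(x)\rangle_{HS} = h(x) - \bbe h(X),\quad x\in \bbr^d.
\end{align*}

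Next, I would integrate this identity against $\mu$ and substitute using the Stein kernel. Taking the vector-valued test function $g = \nabla(f_h)$ in the Stein kernel identity supplied by Proposition \ref{prop:existence_stein_kernel},
\begin{align*}
\bbe \langle X_\mu; \nabla(f_h)(X_\mu)\rangle = \bbe \langle \tau_\mu(X_\mu); \operatorname{Hess}(f_h)(X_\mu)\rangle_{HS},
\end{align*}
since $\nabla(\nabla(f_h)) = \operatorname{Hess}(f_h)$. Combining this with the integrated Stein equation yields
\begin{align*}
\bbe h(X_\mu) - \bbe h(X) = \bbe \langle \Sigma - \tau_\mu(X_\mu); \operatorname{Hess}(f_h)(X_\mu)\rangle_{HS}.
\end{align*}
The Cauchy-Schwarz inequality (in the Hilbert-Schmidt inner product and then in $L^2(\mu)$), together with the bound $\sup_x\|\operatorname{Hess}(f_h)(x)\|_{HS} \leq \|\Sigma^{-1/2}\|_{op}$ from Lemma \ref{lem:Stein_Factors_anisotropic_Gaussian}, then gives
\begin{align*}
\bigl|\bbe h(X_\mu) - \bbe h(X)\bigr| \leq \|\Sigma^{-1/2}\|_{op}\bigl(\bbe \|\tau_\mu(X_\mu) - \Sigma\|_{HS}^2\bigr)^{1/2}.
\end{align*}

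It remains to control the Stein discrepancy by $(U_{\Sigma,\mu}-1)\|\Sigma\|_{HS}^2$. Testing the defining identity \eqref{eq:Stein_Kernel_multi} with the coordinate functions $f(x)=x_j e_i$ (justified by approximation within $\mathcal{D}(\mathcal{E}_{\Sigma,\mu})$ using the finite second moment of $\mu$) gives the centering $\bbe \tau_\mu(X_\mu) = \Sigma$ entrywise; expanding the square then yields
\begin{align*}
\bbe \|\tau_\mu(X_\mu) - \Sigma\|_{HS}^2 = \bbe \|\tau_\mu(X_\mu)\|_{HS}^2 - \|\Sigma\|_{HS}^2,
\end{align*}
and the bound $\bbe \|\tau_\mu(X_\mu)\|_{HS}^2 \leq U_{\Sigma,\mu}\|\Sigma\|_{HS}^2$ from Proposition \ref{prop:existence_stein_kernel} closes the estimate. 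Taking the supremum over admissible $h$ yields \eqref{ineq:W_1_Gauss_Anisotropic}.

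The only genuine technical obstacle is the approximation step: the vector field $\nabla(f_h)$ and the linear field $x\mapsto x_j e_i$ are bounded/smooth but not compactly supported, so they a priori lie outside $\mathcal{C}_c^\infty(\bbr^d,\bbr^d)$. However, they belong to $\mathcal{D}(\mathcal{E}_{\Sigma,\mu})$ by truncation against a standard cutoff and the integrability of $\|x\|^2$ under $\mu$, so the Stein kernel identity extends to them by a density argument; the rest of the proof is a bookkeeping exercise combining Lemma \ref{lem:Stein_Factors_anisotropic_Gaussian} with Proposition \ref{prop:existence_stein_kernel}.
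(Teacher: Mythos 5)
Your proposal is correct and follows essentially the same route as the paper: Kantorovich duality reduced to $\mathcal{C}_c^{\infty}$ Lipschitz test functions, the Stein equation combined with the Hessian bound $\|\Sigma^{-1/2}\|_{op}$ from Lemma \ref{lem:Stein_Factors_anisotropic_Gaussian}, and the identity $\bbe\|\tau_\mu(X_\mu)-\Sigma\|_{HS}^2=\bbe\|\tau_\mu(X_\mu)\|_{HS}^2-\|\Sigma\|_{HS}^2$ closed by the bound of Proposition \ref{prop:existence_stein_kernel}. The only cosmetic difference is that you obtain the centering $\bbe\,\tau_\mu(X_\mu)=\Sigma$ entrywise via $f(x)=x_j e_i$, whereas the paper computes $\bbe\langle\tau_\mu(X_\mu);\Sigma\rangle_{HS}=\bbe\langle X_\mu;\Sigma X_\mu\rangle$ directly; these are equivalent.
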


\begin{proof}
Recall that, 
\begin{align*}
W_1(\mu , \gamma_\Sigma) = \underset{\| h \|_{\operatorname{Lip}} \leq 1}{\sup} \left| \bbe h(X_\mu) - \bbe h(X) \right|, 
\end{align*}
with $X_\mu \sim \mu$ and $X \sim \gamma_\Sigma$. Moreover, by standard approximation arguments (see Lemma \ref{lem:representation_W_1} and Lemma \ref{lem:representation_W_1_2}  of the Appendix),
\begin{align*}
W_1(\mu , \gamma_\Sigma) = \underset{h \in \mathcal{C}_c^{\infty}(\bbr^d), \, \| h \|_{\operatorname{Lip}} \leq 1}{\sup} \left| \bbe h(X_\mu) - \bbe h(X) \right|.
\end{align*} 
Now, let $h \in \mathcal{C}_c^{\infty}(\bbr^d)$ be such that $\| h \|_{\operatorname{Lip}} \leq 1$. Thus, thanks to Lemma \ref{lem:Stein_Factors_anisotropic_Gaussian} and to Stein's method applied to the multivariate Gaussian probability measure $\gamma_\Sigma$, 
\begin{align*}
\left| \bbe h(X_\mu)  - \bbe h(X) \right|  \leq  \|\Sigma^{-\frac{1}{2}}\|_{op} \left(\bbe\left( \|\tau_\mu(X_\mu) - \Sigma \|^2_{HS} \right)\right)^{\frac{1}{2}}.
\end{align*}
Next, from \eqref{eq:Stein_Kernel_multi} and Proposition \ref{prop:existence_stein_kernel},  
\begin{align*}
\bbe \left\| \tau_\mu\left(X_\mu\right) - \Sigma \right\|_{HS}^2 & =  \bbe \| \tau_\mu(X_\mu) \|^2_{HS} + \|\Sigma\|^2_{HS} - 2 \bbe \langle \tau_\mu(X_\mu) ;  \Sigma \rangle_{HS} , \\
& =   \bbe \| \tau_\mu(X_\mu) \|^2_{HS} + \|\Sigma\|^2_{HS} - 2  \bbe \langle  X_\mu ; \Sigma X_\mu \rangle , \\
& =   \bbe \| \tau_\mu(X_\mu) \|^2_{HS} - \|\Sigma\|^2_{HS} , \\
& \leq \|\Sigma\|^2_{HS} \left(U_{\Sigma,\mu}-1\right).
\end{align*}
\end{proof}

\begin{rem}\label{rem:smooth_wasserstein_2_HS}
(i) When $\Sigma = I_d$, the inequality \eqref{ineq:W_1_Gauss_Anisotropic} boils down to
\begin{align}\label{ineq:W_1_Gauss_isotropic}
W_1(\mu , \gamma) \leq  \sqrt{d} \sqrt{U_{I_d,\mu} - 1},
\end{align}
which matches the upper bound obtained in \cite[Theorem $4.1$]{CFP19} for the $2$-Wasserstein distance based on \cite[Proposition $3.1$]{LNP15}.\\
(ii) Note that the previous reasoning ensures as well the following bound (which is relevant in an infinite dimensional setting): for all $\mu$ as in Theorem \ref{thm:stability_result}, 
\begin{align}\label{ineq:smooth_d_2_HS}
\tilde{d}_{W_2}(\mu , \gamma_{\Sigma}) \leq \frac{1}{2} \| \Sigma \|_{HS} \sqrt{U_{\Sigma, \mu} - 1},
\end{align}
with, 
\begin{align*}
\tilde{d}_{W_2}(\mu , \gamma_{\Sigma}) := \underset{h \in \mathcal{C}^{2}(\bbr^d), \|h\|_{\operatorname{Lip}}\leq 1, \,  \tilde{M}_2(h) \leq 1}{\sup} \left| \int_{\bbr^d} h(x) \mu(dx)  - \int_{\bbr^d} h(x) \gamma_{\Sigma}(dx) \right|. 
\end{align*}
\end{rem}
\noindent
In the forthcoming result, a regularization argument shows how to remove the closability assumption.

\begin{thm}\label{thm:stability_result_without_closability}
Let $d \geq 1$ and let $\Sigma$ be a nondegenerate $d \times d$ covariance matrix. Let $\gamma_{\Sigma}$ be the centered Gaussian probability measure with covariance matrix $\Sigma$ and let $\mu$ be a probability measure on $\bbr^d$ with finite second moment such that 
\begin{align*}
\int_{\bbr^d} x \mu(dx) = 0 , \quad \int_{\bbr^d} x x^T \mu(dx) = \Sigma. 
\end{align*}
Finally, let there exists $U_{\Sigma,\mu}>0$ such that for all $f \in \mathcal{C}^{\infty}_c(\bbr^d, \bbr^d)$ with $\int_{\bbr^d} f(x) \mu(dx) = 0$, 
\begin{align*}
\int_{\bbr^d} \|f(x)\|^2 \mu(dx) \leq U_{\Sigma,\mu} \int_{\bbr^d} \langle \Sigma \left(\nabla(f)(x)\right) ; \nabla(f)(x) \rangle_{HS} \mu(dx). 
\end{align*} 
Then, 
\begin{align}\label{ineq:W_1_Gauss_Anisotropic_2}
W_1(\mu , \gamma_\Sigma) \leq \|\Sigma^{-\frac{1}{2}}\|_{op} \| \Sigma \|_{HS} \sqrt{U_{\Sigma,\mu} - 1}. 
\end{align}
\end{thm}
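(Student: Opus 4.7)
My plan is to remove the closability hypothesis via a Gaussian smoothing argument: approximate $\mu$ by a one-parameter family $\mu_\varepsilon$ whose densities are smooth and strictly positive, apply Theorem \ref{thm:stability_result} to $\mu_\varepsilon$, and let $\varepsilon \to 0^+$. The crucial intermediate step will be to show that the Poincar\'e constant is transferred from $\mu$ to $\mu_\varepsilon$ without loss.

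Concretely, for $\varepsilon \in (0,1)$, let $Z \sim \gamma_\Sigma$ be independent of $X \sim \mu$, set $X_\varepsilon := \sqrt{1-\varepsilon^2}\, X + \varepsilon Z$, and denote by $\mu_\varepsilon$ its law. One checks immediately that $\mu_\varepsilon$ is centered with covariance $\Sigma$, and the Gaussian component guarantees that $\mu_\varepsilon$ has a smooth, strictly positive Lebesgue density with $L^2$-logarithmic gradient, so Remark \ref{rem:closability}~(ii) yields closability of $(\mathcal{E}_{\Sigma, \mu_\varepsilon}, \mathcal{C}_c^\infty(\bbr^d, \bbr^d))$. To propagate Poincar\'e, fix $f \in \mathcal{C}_c^\infty(\bbr^d, \bbr^d)$ with $\int f\, d\mu_\varepsilon = 0$ and introduce $g(x) := \bbe_Z\bigl[f(\sqrt{1-\varepsilon^2}\,x + \varepsilon Z)\bigr]$, a smooth, rapidly decaying vector field with $\int g\, d\mu = 0$ and $\nabla g(x) = \sqrt{1-\varepsilon^2}\,\bbe_Z\bigl[(\nabla f)(X_\varepsilon)\mid X=x\bigr]$. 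The orthogonal decomposition
\begin{align*}
\int_{\bbr^d}\|f\|^2\, d\mu_\varepsilon = \int_{\bbr^d}\|g\|^2\, d\mu + \bbe\bigl[\|f(X_\varepsilon) - g(X)\|^2\bigr]
\end{align*}
reduces matters to two pieces. The first is controlled by the Poincar\'e hypothesis on $\mu$ applied to $g$, combined with Jensen's inequality for the convex quadratic form $A \mapsto \langle \Sigma A, A\rangle_{HS}$, giving $U_{\Sigma,\mu}(1-\varepsilon^2)\int \langle \Sigma \nabla f, \nabla f\rangle_{HS}\, d\mu_\varepsilon$. The second piece is a sum of conditional coordinate-wise variances, which I bound by the standard Gaussian Poincar\'e inequality for $\gamma_{\varepsilon^2 \Sigma}$ (a rescaling of Lemma \ref{lem:poincare_gaussian}), producing $\varepsilon^2 \int \langle \Sigma \nabla f, \nabla f\rangle_{HS}\, d\mu_\varepsilon$. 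Since $U_{\Sigma,\mu} \geq 1$ (test with $x \mapsto x_j$, which gives equality), summing these bounds yields $U_{\Sigma, \mu_\varepsilon} \leq \varepsilon^2 + U_{\Sigma,\mu}(1 - \varepsilon^2) \leq U_{\Sigma,\mu}$.

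Theorem \ref{thm:stability_result} then applies to $\mu_\varepsilon$ and yields the uniform bound
\begin{align*}
W_1(\mu_\varepsilon, \gamma_\Sigma) \leq \|\Sigma^{-\frac{1}{2}}\|_{op}\|\Sigma\|_{HS}\sqrt{U_{\Sigma,\mu} - 1}.
\end{align*}
To conclude, the coupling $(X, X_\varepsilon)$ gives $W_1(\mu_\varepsilon, \mu) \leq \bbe\|X_\varepsilon - X\| \leq (1 - \sqrt{1-\varepsilon^2})\bbe\|X\| + \varepsilon\,\bbe\|Z\|$, which tends to $0$ as $\varepsilon \to 0^+$ thanks to the finite second moments of $\mu$ and $\gamma_\Sigma$. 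The triangle inequality for $W_1$ then delivers \eqref{ineq:W_1_Gauss_Anisotropic_2}.

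The principal technical obstacle I anticipate is justifying that $g$, which is smooth but not compactly supported, is an admissible test function for the Poincar\'e inequality assumed on $\mu$ with the same constant $U_{\Sigma,\mu}$. I would address this by truncating $g$ against a smooth cutoff $\chi_R$ (adjusting the mean of $\chi_R g$ to remain zero), applying the hypothesized inequality to $\chi_R g - \mu(\chi_R g)$, and passing to the limit $R \to \infty$ via dominated convergence, using the compact support of $f$ (hence the rapid decay of $g$ and $\nabla g$) together with the finite second moment of $\mu$.
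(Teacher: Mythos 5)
Your proof is correct and follows the same overall strategy as the paper's (Gaussian regularization, transfer of the Poincar\'e constant to the convolution, then a limiting argument), but each of the three steps is implemented differently, and in each case your variant is at least as clean. First, you regularize by $X_\varepsilon=\sqrt{1-\varepsilon^2}\,X+\varepsilon Z$, which keeps the covariance equal to $\Sigma$; the paper instead takes $X_\varepsilon=X+Z_\varepsilon$ with $Z_\varepsilon\sim\mathcal{N}(0,\varepsilon^2\Sigma)$, so its approximating measure has covariance $(1+\varepsilon^2)\Sigma$ and must be compared with an $\varepsilon$-dependent Gaussian $\tilde\gamma_\varepsilon$. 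Second, for the Poincar\'e transfer the paper follows Borovkov--Utev: it applies the hypothesis to the translates $\tau_x(f)$, integrates in $x$ against the Gaussian, and then applies the Gaussian Poincar\'e inequality to the averaged function $G(x)=\int\tau_x(f)\,d\mu$, obtaining $U_{\Sigma,\mu,\varepsilon}\le U_{\Sigma,\mu}+\varepsilon^2$; your conditional-variance (Pythagoras) decomposition plus Jensen is the mirror image of this computation and, thanks to the $\sqrt{1-\varepsilon^2}$ scaling, gives the slightly sharper $(1-\varepsilon^2)U_{\Sigma,\mu}+\varepsilon^2$. Third, you conclude with the triangle inequality and the explicit coupling bound $W_1(\mu_\varepsilon,\mu)\to0$, whereas the paper fixes a test function $h$, passes to the limit using weak convergence of both $\mu_\varepsilon$ and $\tilde\gamma_\varepsilon$, and only then takes the supremum; your route avoids having to move the target Gaussian. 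The one technical point you rightly flag --- that $g(x)=\bbe_Z f(\sqrt{1-\varepsilon^2}\,x+\varepsilon Z)$ is not compactly supported, so the $\mathcal{C}_c^\infty$ form of the hypothesis must be extended by truncation --- is genuine but routine given the super-exponential decay of $g$ and $\nabla g$ and the finite second moment of $\mu$ (when truncating you must also re-center, e.g.\ by subtracting $c_R\chi_R$ with $c_R=\mu(g)/\mu(\chi_R)$, so that the test function stays compactly supported with mean zero); note that the paper's own proof faces the symmetric issue, since it applies the Gaussian Poincar\'e inequality to the non-compactly-supported function $G$ without comment.
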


\begin{proof}
Let $d \geq 1$ and let $\Sigma$ be a nondegenerate covariance matrix.~Let $\varepsilon>0$ and let $\gamma_{\varepsilon}$ be the centered Gaussian probability measure on $\bbr^d$ with covariance matrix given by $\varepsilon^2 \Sigma$. 
Let $\mu$ be a centered probability measure on $\bbr^d$ with finite second moment such that 
\begin{align*}
\int_{\bbr^d} x x^T \mu(dx) = \Sigma,
\end{align*}
and satisfying the Poincar\'e-type inequality \eqref{ineq:Poincare_type_inequality} with constant $U_{\Sigma, \mu}$.~Next, let $\mu_{\varepsilon}$ be the probability measure on $\bbr^d$ defined through the following characteristic function: for all $\xi \in \bbr^d$, 
\begin{align*}
\hat{\mu}_{\varepsilon}\left( \xi \right) := \hat{\mu}(\xi) \exp \left( - \dfrac{\varepsilon^2 \langle \xi ; \Sigma(\xi) \rangle}{2}\right), 
\end{align*}
and let $X_\varepsilon \sim \mu_\varepsilon$. Then, 
\begin{align*}
X_\varepsilon =_{\cal L} X_\mu + Z_\varepsilon,
\end{align*}
where $(X_\mu , Z_\varepsilon)$ are independent with $X_\mu \sim \mu$ and $Z_\varepsilon \sim \gamma_\varepsilon$, and where $=_{\cal L}$ stands for equality in distribution.~Next, let $\mathcal{E}_\varepsilon$ be the bilinear symmetric form defined, for all $f, g \in \mathcal{C}_c^{\infty}(\bbr^d, \bbr^d)$, by
\begin{align*}
\mathcal{E}_\varepsilon(f,g) = \int_{\bbr^d} \langle \Sigma_\varepsilon\left(\nabla(f)(x)\right) ; \nabla(g)(x) \rangle_{HS} \mu_{\varepsilon}(dx),
\end{align*}
where $\Sigma_\varepsilon : = (1+ \varepsilon^2)\Sigma$. In particular, the probability measure $\mu_\varepsilon$ is absolutely continuous with respect to the 
Lebesgue measure with density $\psi_\varepsilon$ given, for all $x \in \bbr^d$, by
\begin{align*}
\psi_\varepsilon(x) = \int_{\bbr^d} p_\varepsilon(x-y) \mu(dy),
\end{align*}
where $p_\varepsilon$ is the density of the nondegenerate Gaussian probability measure $\gamma_\varepsilon$. Let us prove that the form 
$\left(\mathcal{E}_\varepsilon, \mathcal{C}_c^{\infty}(\bbr^d, \bbr^d)\right)$ is closable. Let $(f_n)_{n \geq 1}$ be a sequence of functions in $\mathcal{C}^\infty_c(\bbr^d, \bbr^d)$ such that $\|f_n\|_{L^2(\bbr^d, \bbr^d, \mu_\varepsilon)}$
 tends to $0$ as $n$ tends to $+\infty$ and such that $(\nabla(f_n))_{n \geq 1}$ is a Cauchy sequence in $L^2(\bbr^d, \mathcal{H} , \mu_\varepsilon)$ where $\left(\mathcal{H} , \langle \cdot ; \cdot \rangle_{\mathcal{H}}\right)$ 
 is given by $\left(\mathcal{M}_{d \times d}(\bbr) , \langle \Sigma_\varepsilon \cdot ; \cdot \rangle_{HS}\right)$.~Since $\Sigma_\varepsilon$ is nondegenerate, $L^2(\bbr^d, \mathcal{H} , \mu_\varepsilon)$ is complete. Thus, there exists $F \in L^2(\bbr^d, \mathcal{H} , \mu_\varepsilon)$ such that 
\begin{align*}
\nabla(f_n) \underset{n \rightarrow +\infty}{\longrightarrow} F , \quad L^2(\bbr^d, \mathcal{H} , \mu_\varepsilon). 
\end{align*} 
Next, let $\Psi \in \mathcal{C}_c^{\infty}\left(\bbr^d, \mathcal{M}_{d \times d}(\bbr) \right)$.~Then, integrating by parts,
\begin{align*}
\int_{\bbr^d} (1+\varepsilon^2) \langle \Sigma  F(x) ; \Psi(x) \rangle_{HS} \psi_\varepsilon(x) dx & = (1+ \varepsilon^2) \underset{n \longrightarrow +\infty}{\lim} \int_{\bbr^d} \langle \nabla(f_n)(x) ; \Sigma \Psi(x) \rangle_{HS} \psi_\varepsilon(x) dx, \\
& = (1+ \varepsilon^2) \underset{n \longrightarrow +\infty}{\lim}  \sum_{i,j = 1}^d \int_{\bbr^d} \partial_i(f_{n,j})(x) (\Sigma\Psi)_{i,j}(x)\psi_\varepsilon(x) dx , \\
& = (1+ \varepsilon^2) \underset{n \longrightarrow +\infty}{\lim}  \sum_{i,j = 1}^d - \int_{\bbr^d} f_{n,j}(x) \partial_{i} \left((\Sigma\Psi)_{i,j}(x)\psi_\varepsilon(x)\right) dx , \\
& = (1+ \varepsilon^2) \underset{n \longrightarrow +\infty}{\lim}  \sum_{i,j = 1}^d - \int_{\bbr^d} f_{n,j}(x) \bigg( \partial_i \left((\Sigma\Psi)_{i,j}\right)(x) \psi_\varepsilon(x) \\
&\qquad\qquad\qquad\qquad+ (\Sigma\Psi)_{i,j}(x) \partial_i\left(\psi_\varepsilon\right)(x)\bigg) dx. 
\end{align*}
Now, by the Cauchy-Schwarz inequality, for all $n \geq 1$, all $i,j \in \{1 , \dots, d\}$ and all $\varepsilon>0$, 
\begin{align*}
\left| \int_{\bbr^d} f_{n,j}(x) \partial_i \left( (\Sigma\Psi)_{i,j} \right)(x) \psi_\varepsilon(x) dx \right| \leq C_{i,j}(\Psi,\Sigma)\|f_{n,j}\|_{L^2(\bbr^d,\bbr, \mu_\varepsilon)},
\end{align*}
where $C_{i,j}(\Psi, \Sigma)>0$ depends on $i,j$, $\Psi$ and $\Sigma$ only. Moreover, by the Cauchy-Schwarz inequality again, for all $n \geq 1$, all $i,j \in \{1 , \dots, d\}$ and all $\varepsilon>0$, 
\begin{align*}
\left| \int_{\bbr^d} f_{n,j}(x) (\Sigma \Psi)_{i,j}(x) \dfrac{\partial_i\left(\psi_\varepsilon\right)(x)}{\psi_\varepsilon(x)}  \psi_{\varepsilon}(x) dx  \right| \leq \tilde{C}_{i,j}\left(\Psi, \Sigma \right) \| f_{n,j} \|_{L^2(\bbr^d, \bbr,\mu_\varepsilon)} \left\| \dfrac{\partial_i(\psi_\varepsilon)}{\psi_\varepsilon} \right\|_{L^2(K_\Psi, \bbr,\mu_\varepsilon)},
\end{align*}
for some $\tilde{C}_{i,j}(\Psi,\Sigma)>0$ only depending on $i,j$, on $\Psi$ and on $\Sigma$ and for some compact subset $K_\Psi$ of $\bbr^d$ depending only on $\Psi$. In particular, note that, for all $\varepsilon >0$ and all compact subsets $K$ of $\bbr^d$, 
\begin{align*}
\left\| \dfrac{\partial_i(\psi_\varepsilon)}{\psi_\varepsilon} \right\|^2_{L^2(K, \bbr,\mu_\varepsilon)} = \int_{K} \left| \dfrac{\partial_i(\psi_\varepsilon)(x)}{\psi_{\varepsilon}(x)} \right|^2 \psi_\varepsilon(x)dx <+\infty,
\end{align*}
since $\psi_\varepsilon \in \mathcal{C}^1(\bbr^d)$ and $\psi_\varepsilon>0$. Thus, for all $\Psi \in \mathcal{C}^\infty_c(\bbr^d , \mathcal{M}_{d \times d}(\bbr))$, 
\begin{align*}
\int_{\bbr^d} (1+ \varepsilon^2) \langle \Sigma F(x) ; \Psi(x) \rangle_{HS} \psi_\varepsilon(x) dx & = 0,
\end{align*}
which ensures that the form is closable. Moreover, for all $f \in \mathcal{C}_c^{\infty}(\bbr^d,\bbr^d)$ with $\int_{\bbr^d} f(x) \mu_\varepsilon(dx) = 0 $, 
\begin{align*}
\int_{\bbr^d} \|f(x)\|^2 \mu_\varepsilon(dx)  \leq  U_{\Sigma,\mu,\varepsilon} \int_{\bbr^d} \langle \Sigma (\nabla(f))(x) ; \nabla(f)(x) \rangle_{HS} \mu_\varepsilon(dx).
\end{align*}
Finally, note that the nondegenerate Gaussian probability measure $\gamma_{\varepsilon}$ verifies the following Poincar\'e-type inequality: for all $f \in \mathcal{C}_c^{\infty}(\bbr^d, \bbr^d)$ such that $\int_{\bbr^d} f(x) \gamma_{\varepsilon}(dx) = 0$, 
\begin{align*}
\int_{\bbr^d} \|f(x)\|^2 \gamma_{\varepsilon}(dx) \leq \varepsilon^2 \int_{\bbr^d} \langle \Sigma\nabla(f)(x) ; \nabla(f)(x) \rangle_{HS} \gamma_{\varepsilon}(dx),
\end{align*}
so that, with obvious notation, $U_{\Sigma}(\gamma_{\varepsilon}) = \varepsilon^2 U_{\Sigma, \varepsilon}(\gamma_{\varepsilon}) = \varepsilon^2$. So, to conclude, let us find an upper bound for the Poincar\'e constant $U_{\Sigma,\mu,\varepsilon}$ based on the fact that the probability measure $\mu_\varepsilon$ is the convolution of $\mu$ and $\gamma_\varepsilon$, both satisfying a Poincar\'e-type inequality with energy form given, for all $f \in \mathcal{C}_c^{\infty}(\bbr^d, \bbr^d)$, by 
\begin{align*}
\mathcal{E}_{\Sigma}(f,f) = \int_{\bbr^d} \langle \Sigma\nabla(f)(x) ; \nabla(f)(x) \rangle_{HS} \mu(dx),
\end{align*}
and with respective constants $U_{\Sigma,\mu}$ and $\varepsilon^2$.~The proof follows closely the one of \cite[Theorem 2, (vii)]{BU_84}. Let $f \in \mathcal{C}_c^{\infty}(\bbr^d, \bbr^d)$ be such that $\int_{\bbr^d}f(x) \mu_{\varepsilon}(dx) = 0$ and let $x \in \bbr^d$ be fixed.~Then, 
\begin{align}\label{ineq:poincare_mu_translated}
\int_{\bbr^d} \| \tau_x(f)(y) - \int_{\bbr^d} \tau_x(f)(y) \mu(dy) \|^2 \mu(dy) \leq U_{\Sigma, \mu} \int_{\bbr^d} \langle \Sigma \nabla(\tau_x(f))(y) ; \nabla(\tau_x(f))(y) \rangle_{HS} \mu(dy),
\end{align}
where $\tau_x$ is the translation operator defined, for all $f$ smooth enough and all $y \in \bbr^d$, by $\tau_x(f)(y) = f(x+y)$. Developing the square gives, 
\begin{align*}
\int_{\bbr^d} \| \tau_x(f)(y) \|^2 \mu(dy) &\leq U_{\Sigma, \mu} \int_{\bbr^d} \langle \Sigma \nabla(\tau_x(f))(y) ; \nabla(\tau_x(f))(y) \rangle_{HS} \mu(dy) \\
& \qquad\qquad\qquad\qquad +  \left\| \int_{\bbr^d} \tau_x(f)(y) \mu(dy) \right\|^2. 
\end{align*}
Integrating the previous inequality in the $x$ variable with respect to the probability measure $\gamma_{\varepsilon}$ gives
\begin{align*}
\int_{\bbr^d} \| f(z) \|^2 \mu_{\varepsilon}(dz) & \leq U_{\Sigma, \mu} \int_{\bbr^d} \langle \Sigma \nabla(f)(z) ; \nabla(f)(z) \rangle_{HS} \mu_{\varepsilon}(dz) \\
& \qquad\qquad\qquad\qquad +  \int_{\bbr^d}  \left\| \int_{\bbr^d} \tau_x(f)(y) \mu(dy) \right\|^2 \gamma_{\varepsilon}(dx). 
\end{align*}
Now, since $f \in \mathcal{C}_c^{\infty}(\bbr^d, \bbr^d)$, the function $G$ defined, for all $x \in \bbr^d$, by
\begin{align*}
G(x) :=  \int_{\bbr^d} \tau_x(f)(y) \mu(dy), 
\end{align*}
is in $\mathcal{C}^1(\bbr^d)$ with Jacobian matrix given, for all $x \in \bbr^d$, by
\begin{align*}
\nabla(G)(x) = \int_{\bbr^d} \nabla(f)(x+y) \mu(dy).
\end{align*}
Thus, 
\begin{align*}
\int_{\bbr^d}  \left\| G(x) \right\|^2 \gamma_{\varepsilon}(dx) \leq \varepsilon^2 \int_{\bbr^d} \langle \Sigma \nabla(G)(x) ; \nabla(G)(x) \rangle_{HS} \gamma_{\varepsilon}(dx) + \left\| \int_{\bbr^d} G(x) \gamma_{\varepsilon}(dx) \right\|^2.
\end{align*}
But, $\int_{\bbr^d} G(x) \gamma_{\varepsilon}(dx) = 0$.~Finally, by Jensen's inequality, for all $x \in \bbr^d$,
\begin{align*}
 \langle \Sigma \nabla(G)(x) ; \nabla(G)(x) \rangle_{HS} & = \left\| \Sigma^{\frac{1}{2}} \nabla(G)(x) \right \|_{HS}^2 , \\
 &= \left\| \Sigma^{\frac{1}{2}} \int_{\bbr^d} \nabla(f)(x+y) \mu(dy) \right\|_{HS}^2 , \\
 & \leq \int_{\bbr^d} \left\| \Sigma^{\frac{1}{2}} \nabla(f)(x+y) \right\|_{HS}^2 \mu(dy).
\end{align*}
Then, 
\begin{align*}
\int_{\bbr^d}  \left\| G(x) \right\|^2 \gamma_{\varepsilon}(dx) \leq \varepsilon^2 \int_{\bbr^d \times \bbr^d} \left\| \Sigma^{\frac{1}{2}} \nabla(f)(x+y) \right\|_{HS}^2 \mu(dy)\gamma_{\varepsilon}(dx),
\end{align*}
which implies that $U_{\Sigma, \mu, \varepsilon} \leq U_{\Sigma, \mu} + \varepsilon^2$. So from Theorem \ref{thm:stability_result}, 
\begin{align}\label{ineq:Wasserstein_1_varepsilon}
W_1 \left(\mu_{\varepsilon} , \tilde{\gamma}_{\varepsilon}\right) \leq \|\Sigma_{\varepsilon}^{-\frac{1}{2}}\|_{op} \| \Sigma_{\varepsilon} \|_{HS} \sqrt{U_{\Sigma,\mu, \varepsilon} - 1},
\end{align}
where $\tilde{\gamma}_{\varepsilon}$ is a nondegenerate centered Gaussian probability measure with covariance matrix given by $\Sigma_{\varepsilon} = (1+ \varepsilon^2) \Sigma$. Now, by L\'evy's continuity theorem, it is clear that $\mu_{\varepsilon}$ and $\tilde{\gamma}_{\varepsilon}$ converge weakly respectively to $\mu$ and to $\gamma_{\Sigma}$, as $\varepsilon \rightarrow 0^+$. Thus, let $h \in \mathcal{C}^{\infty}_c(\bbr^d)$ be such that $\| h \|_{\operatorname{Lip}} \leq 1$.  From the proof of Theorem \ref{thm:stability_result}, 
\begin{align*}
\left| \int_{\bbr^d} h(x)\mu_{\varepsilon}(dx) -  \int_{\bbr^d} h(x)\tilde{\gamma}_{\varepsilon}(dx)  \right| \leq \|\Sigma_{\varepsilon}^{-\frac{1}{2}}\|_{op} \| \Sigma_{\varepsilon} \|_{HS} \sqrt{U_{\Sigma, \mu} + \varepsilon^2 - 1}. 
\end{align*}
Letting $\varepsilon \rightarrow 0^+$ and then taking the supremum over all $h \in \mathcal{C}_c^{\infty}(\bbr^d)$ with $\| h \|_{\operatorname{Lip}} \leq 1$ leads to 
\begin{align*}
W_1 \left(\mu , \gamma_{\Sigma} \right) \leq \|\Sigma^{-\frac{1}{2}}\|_{op} \| \Sigma \|_{HS} \sqrt{U_{\Sigma,\mu} - 1},
\end{align*}
which concludes the proof of the theorem.
\end{proof}
\noindent
As an application of the previous techniques, let us provide a rate of convergence in $1$-Wasserstein distance in the multivariate central limit theorem where the limiting centered Gaussian probability measure on $\bbr^d$ has covariance matrix given by $\Sigma$.  The argument is based on a specific representation for the Stein's kernel of the standardized sum given, for all $n \geq 1$,  by
\begin{align}\label{eq:normalized_sum}
S_n  = \frac{1}{\sqrt{n}} \sum_{k = 1}^n X_k ,  
\end{align}
where $(X_k)_{k \geq 1}$ is a sequence of independent and identically distributed (iid) centered random vectors of $\bbr^d$ with finite second moment such that 
\begin{align*}
\bbe X_1 X_1^T  = \Sigma,
\end{align*}
and whose law satisfies the Poincar\'e-type inequality \eqref{ineq:Poincare_type_inequality}. 

\begin{thm}\label{thm:Rate_TCL_W1}
Let $d \geq 1$ and let $\Sigma$ be a nondegenerate $d \times d$ covariance matrix.~Let $\gamma_\Sigma$ be the nondegenerate centered Gaussian probability measure on $\bbr^d$ with covariance matrix given by $\Sigma$.  Let $\mu$ be a centered probability measure on $\bbr^d$ with finite second moments such that 
\begin{align*}
\int_{\bbr^d} x x^T \mu(dx) = \Sigma,
\end{align*}
and which satisfies the Poincar\'e-type inequality \eqref{ineq:Poincare_type_inequality} for some $U_{\Sigma, \mu} >0$.~Let $(X_k)_{k \geq 1}$ be a sequence of independent and identically distributed random vectors of $\bbr^d$ with law $\mu$ and let $(S_n)_{n \geq 1}$ be the sequence of normalized sums defined by \eqref{eq:normalized_sum} and with respective laws $(\mu_n)_{n \geq 1}$.~Then, for all $n \geq 1$, 
\begin{align}\label{ineq:rate_W1_CLT}
W_1(\mu_n, \gamma_\Sigma) \leq \dfrac{\|\Sigma^{-\frac{1}{2}}\|_{op} \| \Sigma \|_{HS}}{\sqrt{n}} \sqrt{U_{\Sigma,\mu} - 1}.
\end{align}
\end{thm}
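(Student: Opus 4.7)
The proof proceeds by Stein's method, but with a crucial tensorization of the Stein kernel that produces the $1/\sqrt{n}$ factor. The naive route (applying Theorem~\ref{thm:stability_result_without_closability} directly to $\mu_n$ with $U_{\Sigma,\mu_n}\leq U_{\Sigma,\mu}$ obtained by standard Poincar\'e tensorization) does not give any rate. Instead, one explicitly constructs a Stein kernel for $\mu_n$ as a conditional average and exploits independence of the summands.

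\textbf{Step 1 (Stein kernel for $\mu$).} Assume first the bilinear form $(\mathcal{E}_{\Sigma,\mu},\mathcal{C}_c^\infty(\bbr^d,\bbr^d))$ is closable. Proposition \ref{prop:existence_stein_kernel} then produces a Stein kernel $\tau_\mu$ with $\int \|\tau_\mu\|_{HS}^2 d\mu\leq U_{\Sigma,\mu}\|\Sigma\|_{HS}^2$. Testing Stein's identity against linear $f(x)=Ax$ gives $\bbe[\tau_\mu(X_1)]=\Sigma$, hence (as in the proof of Theorem \ref{thm:stability_result})
\begin{align*}
\bbe\|\tau_\mu(X_1)-\Sigma\|_{HS}^{2}\leq (U_{\Sigma,\mu}-1)\|\Sigma\|_{HS}^{2}.
\end{align*}

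\textbf{Step 2 (Tensorized Stein kernel for $\mu_n$).} Define
\begin{align*}
\tau_n(s):=\bbe\Bigl[\frac{1}{n}\sum_{k=1}^n \tau_\mu(X_k)\;\Big|\;S_n=s\Bigr].
\end{align*}
To verify Stein's identity for $\tau_n$, fix $f\in\mathcal{C}_c^\infty(\bbr^d,\bbr^d)$, write $S_n=\tfrac{X_k}{\sqrt n}+S_n^{(k)}$ with $S_n^{(k)}$ independent of $X_k$, and condition on $S_n^{(k)}$. Applying Stein's identity for $\mu$ to the function $x\mapsto f(\tfrac{x}{\sqrt n}+S_n^{(k)})$, whose Jacobian equals $\tfrac{1}{\sqrt n}\nabla f(S_n)$, one obtains $\bbe[\langle X_k;f(S_n)\rangle]=\tfrac{1}{\sqrt n}\bbe[\langle \tau_\mu(X_k);\nabla f(S_n)\rangle_{HS}]$. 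Summing over $k$ and using the tower property gives $\bbe\langle S_n;f(S_n)\rangle=\bbe\langle \tau_n(S_n);\nabla f(S_n)\rangle_{HS}$, so $\tau_n$ is a Stein kernel for $\mu_n$ in the sense of \eqref{eq:Stein_Kernel_multi}.

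\textbf{Step 3 (Stein discrepancy: the $1/n$ gain).} Conditional Jensen yields
\begin{align*}
\bbe\|\tau_n(S_n)-\Sigma\|_{HS}^{2}\leq \bbe\Bigl\|\frac{1}{n}\sum_{k=1}^{n}\bigl(\tau_\mu(X_k)-\Sigma\bigr)\Bigr\|_{HS}^{2}.
\end{align*}
Since $\bbe[\tau_\mu(X_k)-\Sigma]=0$ and the $X_k$ are iid, the cross terms in the Hilbert--Schmidt inner product vanish and one gets
\begin{align*}
\bbe\|\tau_n(S_n)-\Sigma\|_{HS}^{2}\leq \frac{1}{n}\bbe\|\tau_\mu(X_1)-\Sigma\|_{HS}^{2}\leq \frac{U_{\Sigma,\mu}-1}{n}\|\Sigma\|_{HS}^{2}.
\end{align*}

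\textbf{Step 4 (Stein's bound and conclusion).} Insert this into the Stein bound \eqref{ineq:Stein_bound} (valid by Lemma \ref{lem:Stein_Factors_anisotropic_Gaussian} applied to $\mu_n$ and $\gamma_\Sigma$): for any $h\in\mathcal{C}_c^\infty(\bbr^d)$ with $\|h\|_{\mathrm{Lip}}\leq 1$,
\begin{align*}
|\bbe h(S_n)-\bbe h(X)|\leq \|\Sigma^{-1/2}\|_{op}\sqrt{\bbe\|\tau_n(S_n)-\Sigma\|_{HS}^{2}}\leq \frac{\|\Sigma^{-1/2}\|_{op}\|\Sigma\|_{HS}}{\sqrt n}\sqrt{U_{\Sigma,\mu}-1}.
\end{align*}
Taking the supremum over such $h$ and invoking the density arguments used for Theorem \ref{thm:stability_result} gives \eqref{ineq:rate_W1_CLT}.

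\textbf{Step 5 (Removing closability).} If $(\mathcal{E}_{\Sigma,\mu},\mathcal{C}_c^\infty)$ is not closable, mimic the proof of Theorem \ref{thm:stability_result_without_closability}: set $\mu_\varepsilon=\mu*\gamma_\varepsilon$ with $\gamma_\varepsilon$ Gaussian of covariance $\varepsilon^2\Sigma$. Then $\mu_\varepsilon$ has smooth positive density (so closability holds), covariance $(1+\varepsilon^2)\Sigma$, and Poincar\'e constant at most $U_{\Sigma,\mu}+\varepsilon^2$ w.r.t.\ the $\Sigma$-weighted form. The normalized sum of iid copies of $\mu_\varepsilon$ has law $\mu_n*\gamma_{\varepsilon^2\Sigma}$. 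Apply Steps 1--4 to $\mu_\varepsilon$, obtain the analogous bound, and let $\varepsilon\to 0^+$ exactly as at the end of the proof of Theorem \ref{thm:stability_result_without_closability} (weak continuity against bounded Lipschitz test functions combined with the $O(\varepsilon)$ cost of convolving with $\gamma_{\varepsilon^2\Sigma}$).

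\textbf{Main obstacle.} The critical step is Step 2: the verification that the conditional-expectation formula defines a Stein kernel for $\mu_n$. The conditioning must be handled so that the chain-rule factor $1/\sqrt n$ arising from $\nabla_{X_k}[f(S_n)]$ combines correctly with the sum $\tfrac{1}{\sqrt n}\sum_k \bbe\langle X_k;f(S_n)\rangle$ to yield exactly the $\tfrac{1}{n}$ averaging in the definition of $\tau_n$. Once this identity is secured, the $1/\sqrt n$ rate in Step 3 is an immediate consequence of $L^2$-orthogonality of centered iid random matrices, and the rest of the argument reduces to previously established Stein bounds.
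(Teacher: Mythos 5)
Your proposal is correct and follows essentially the same route as the paper's proof: construct the Stein kernel $\tau_n$ for $\mu_n$ as the conditional average $\bbe[\tfrac{1}{n}\sum_k\tau_\mu(X_k)\mid S_n]$, gain the factor $1/n$ from conditional Jensen plus the $L^2$-orthogonality of the centered iid terms $\tau_\mu(X_k)-\Sigma$, and conclude via the Stein bound and the regularization of Theorem \ref{thm:stability_result_without_closability}. Your Step 2 merely spells out (via the decomposition $S_n=X_k/\sqrt{n}+S_n^{(k)}$ and the chain-rule factor $1/\sqrt{n}$) the verification that the paper states in one line.
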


\begin{proof}
First, let us assume that $\mu$ is such that the form $ \left( \mathcal{E}_{\mu}, \mathcal{C}_c^{\infty}(\bbr^d,  \bbr^d)\right)$ is closable. Then, by Proposition \ref{prop:existence_stein_kernel}, there exists $\tau_\mu$ such that, for all $f \in \mathcal{D}(\mathcal{E}_{\Sigma, \mu})$, 
\begin{align*}
\int_{\bbr^d} \langle x ; f(x) \rangle \mu(dx) = \int_{\bbr^d} \langle \nabla(f)(x) ; \tau_\mu(x) \rangle_{HS} \mu(dx).
\end{align*}
Next, let $\tau_n$ be defined, for all $n \geq 1$, by
\begin{align*}
\tau_n(x) = \bbe \left[  \frac{1}{n} \sum_{k = 1}^n \tau_{\mu}(X_k)   |  S_n = x  \right]. 
\end{align*}
Observe that, for all $n \geq 1$ and all $f$ smooth enough, 
\begin{align*}
\int_{\bbr^d} \langle \tau_n(x) ; \nabla(f)(x) \rangle_{HS} \mu_n(dx) & = \frac{1}{n} \sum_{k = 1}^n \bbe \langle \tau_{\mu}(X_k) ; \nabla(f)(S_n) \rangle_{HS} , \\
& = \bbe \langle S_n ; f(S_n) \rangle ,
\end{align*}
as the sequence $(X_k)_{k \geq 1}$ is a sequence of iid random vectors of $\bbr^d$ and that $\tau_\mu$ is a Stein's kernel for the law of $X_1$.  Next, let $h \in \mathcal{C}_c^{\infty}(\bbr^d)$ be such that $\| h \|_{\operatorname{Lip}} \leq 1$. Then, from the proof of Theorem \ref{thm:stability_result}, for all $n \geq 1$
\begin{align*}
\left| \bbe h(S_n) - \bbe h(X) \right| \leq \|\Sigma^{-\frac{1}{2}}\|_{op} \left( \bbe\left( \| \tau_n(S_n) - \Sigma  \|^2_{HS} \right)\right)^{\frac{1}{2}},
\end{align*}
where $X \sim \gamma_{\Sigma}$.  So, let us estimate the Stein discrepancy, i.e., 
the last term on the right-hand side of the above inequality.  By Jensen's inequality,  independence,  since $\bbe \tau_{\mu}(X_1) = \Sigma$ and from the proof of Theorem \ref{thm:stability_result}, 
\begin{align*}
\bbe \| \tau_n(S_n) - \Sigma  \|^2_{HS} & \leq \frac{1}{n^2} \sum_{k = 1}^{n} \bbe \|\tau_\mu(X_1) - \Sigma\|^2_{HS} , \\
& \leq \frac{1}{n}\bbe \|\tau_\mu(X_1) - \Sigma\|^2_{HS} \\
& \leq \frac{\|\Sigma\|^2_{HS}}{n} \left( U_{\Sigma, \mu}-1\right).
\end{align*}
Thus,  for all $n \geq 1$, 
\begin{align*}
\left| \bbe h(S_n) - \bbe h(X) \right| \leq \frac{\|\Sigma^{-\frac{1}{2}}\|_{op}\|\Sigma\|_{HS}}{\sqrt{n}} \sqrt{U_{\Sigma, \mu}-1},
\end{align*}
and so the bound \eqref{ineq:rate_W1_CLT} is proved when the form $\left(\mathcal{E}_{\mu}, \mathcal{C}_c^{\infty}(\bbr^d,\bbr^d)\right)$ is closable.~A regularization argument as in the proof of Theorem \ref{thm:stability_result_without_closability} allows to get the bound  \eqref{ineq:rate_W1_CLT} for the general case, concluding the proof of the theorem. 
\end{proof}
\noindent 

\begin{rem}\label{rem:High_dimensional_CLT}
(i) In Theorem \ref{thm:Rate_TCL_W1}, one could assume $(X_k)_{k\geq 1}$ to be a sequence of independent random vectors of $\bbr^d$ with laws $(\mu_k)_{k \geq 1}$ such that, for all $k \geq 1$,
\begin{align*}
\int_{\bbr^d} x \mu_k(dx) = 0 , \quad \int_{\bbr^d} xx^T \mu_k(dx) = \Sigma,
\end{align*}
and with Poincar\'e constants $(U_{\Sigma , \mu_k})_{k \geq 1}$. Then, by a completely similar argument, for all $n \geq 1$
\begin{align}\label{ineq:nonid}
W_1(\mu_n , \gamma_{\Sigma}) \leq  \dfrac{\|\Sigma^{-\frac{1}{2}}\|_{op} \| \Sigma \|_{HS}}{n} \left(\sum_{k = 1}^n (U_{\Sigma, \mu_k} - 1)\right)^{\frac{1}{2}},
\end{align}
with $S_n \sim \mu_n$.\\
(ii) When $\Sigma = I_d$, the bound \eqref{ineq:rate_W1_CLT} boils down to
\begin{align*}
W_1(\mu_n, \gamma) \leq  \sqrt{\frac{d}{n}}\sqrt{U_{d,\mu} - 1},
\end{align*}
which matches exactly the bound obtained in \cite[Theorem $4.1$]{CFP19} for the 2-Wasserstein distance (recall that by H\"older's inequality $W_1(\mu , \nu) \leq W_2(\mu , \nu)$, with $\mu, \nu$ two probability measures on $\bbr^d$ with finite second moment).~A large amount of work has been dedicated to rates of convergence in transportation distances for high dimensional central limit theorem.~Let us briefly recall some of these most recent results. In \cite{Zhai_2018}, under the condition that $\| X_1 \| \leq \beta$ a.s., for some $\beta>0$, the bound,
\begin{align}\label{ineq:Zhai_W2}
W_2(\mu_n, \gamma_\Sigma) \leq \dfrac{5 \sqrt{d} \beta \left(1+\log n\right)}{\sqrt{n}}, 
\end{align}
is proved, where $\mu_n$ is the law of the normalized sum $S_n$ defined by \eqref{eq:normalized_sum}.~In particular, note that \cite[Theorem $1.1$]{Zhai_2018} is established for all covariance matrices $\Sigma$ and not only for $\Sigma = I_d$.~Thus, our bound gets rid of the term $\beta \sqrt{d}$ in the general nondegenerate case under a finite Poincar\'e constant assumption which is not directly comparable to the condition $\| X_1 \| \leq \beta$ a.s. (see the discussion after \cite[Theorem $4.1$]{CFP19} and \cite{BGMZ_18}). Note that, in the isotropic case,  the bound \eqref{ineq:Zhai_W2} scales linearly with the dimension since $\beta \geq \sqrt{d}$.  An improvement of the bound \eqref{ineq:Zhai_W2} has been obtained in \cite[Theorem $1$]{EMZ_2020} with $\log n$ replaced by $\sqrt{\log n}$. Similarly, in \cite[Theorem $B.1$]{FSX_19}, using Stein's method and Bismut formula, the following bound is obtained at the level of the $1$-Wasserstein distance, 
\begin{align*}
W_1(\mu_n, \gamma)\leq \dfrac{Cd\beta \left(1+ \log n\right)}{\sqrt{n}},
\end{align*}
for some $C>0$ and under the assumption that $\|X_i\| \leq \beta$ a.s., for all $i \geq 1$.  The anisotropic case is not covered by this last result but the non-identically distributed case is. In \cite[Theorem $1$]{Bonis_PTRF_20}, under $\bbe \|X_1\|^4 <+\infty$, the following holds true:   
\begin{align*}
W_2(\mu_n, \gamma) \leq \dfrac{C d^{\frac{1}{4}} \| \bbe X_1X_1^T \|X_1\|^2 \|^{\frac{1}{2}}_{HS}}{\sqrt{n}}, 
\end{align*}
for some $C \in (0,14)$.~The previous bound scales at least linearly with the dimension as well. Finally, let us mention \cite{Fathi_AOP} where sharp rates of convergence in $p$-Wasserstein distances, for $p \geq 2$, are obtained under various (strong)-convexity assumptions. \\
(iii) Thanks to an inequality of Talagrand (see \cite{Tal_96}), quantitative rates of convergence, in relative entropy, towards the Gaussian probability measure $\gamma$ on $\bbr^d$ imply quantitative rates of convergence in $2$-Wasserstein distance. In \cite[Theorem $1$]{ABBN_04} and \cite[Theorem $1.3$]{JB_04},  under a spectral gap assumption, a rate of convergence of order $1/n$ is obtained in relative entropy in dimension $1$, while \cite[Theorem 1.1]{BHN_12} provides a quantitative entropy jump result under log-concavity in any dimension. Finally, \cite{CFP19} and \cite{EMZ_2020} contain quantitative high dimensional entropic CLT under various assumptions. \\
(iv) There is a vast literature on quantitative multivariate central limit theorems for different probability metrics.~For example, in \cite{Bentkus05}, a rate of convergence is obtained for the convex distance. Namely, for all $n \geq 1$,
\begin{align*}
\underset{A \in \mathcal{C}}{\sup} \left| \bbp \left(S_n \in A\right) - \bbp \left(Z \in A\right) \right| \leq \dfrac{c d^{\frac{1}{4}}}{\sqrt{n}} \bbe \| \Sigma^{-\frac{1}{2}} X_1\|^3, 
\end{align*}
where $\mathcal{C}$ is the set of all measurable convex subsets of $\bbr^d$, $Z$ is a centered Gaussian random vector of $\bbr^d$ with nondegenerate covariance matrix $\Sigma$, $c$ is a positive constant which can be made explicit (see \cite{Rai19}) and $(X_i)_{i \geq 1}$ is a sequence of iid random vectors of $\bbr^d$ such that $\bbe X_1 =0$, $\bbe X_1X_1^T = \Sigma$ and $\bbe \|X_1\|^3 < +\infty$.~In \cite{FK_21}, quantitative high dimensional CLTs are investigated by means of Stein's method but where the set $\mathcal{C}$ is replaced by the set of hyperrectangles of $\bbr^d$. In particular, \cite[Theorem $1.1$]{FK_21} provides an error bound using Stein's kernels which holds in the nondegenerate anisotropic case.~Note that \cite[Corollary $1.1$]{FK_21} uses the results contained in \cite{Fathi_AOP} in order to build Stein's kernels when the sampling distribution is a centered probability measure on $\bbr^d$ with a log-concave density and a nondegenerate covariance matrix $\Sigma$ with diagonal entries equal to $1$.~Finally, in \cite{Das_Lahiri_21}, the optimal growth rate of the dimension with the sample size for the probability metric over all hyperrectangles is completely identified under general moment conditions. 
\end{rem}

\begin{rem}\label{rem:example_measure_mu}
(i) Let $d \geq 1$,  let $\Sigma$ be a $d \times d$ nondegenerate covariance matrix and let $V$ be a non-negative function defined on $\bbr^d$ which is twice continuously differentiable everywhere on $\bbr^d$.~Let
\begin{align*}
C_V \int_{\bbr^d} e^{-V(x)} dx = 1 , 
\end{align*}
for some constant $C_V>0$ which depends on $V$ and on $d$. Let $\mu_V$ denote the induced probability measure on $\bbr^d$ and let us assume that
\begin{align*}
\int_{\bbr^d} x  \mu_V(dx) = 0 , \quad \int_{\bbr^d} xx^T  \mu_V(dx) = \Sigma.  
\end{align*}
Assume further that, for all $x \in \bbr^d$, 
\begin{align}\label{ineq:ellipticity}
\operatorname{Hess}(V)(x) \geq \kappa \Sigma^{-1},
\end{align}
for some $\kappa \in (0,1]$ (where the order is in the sense of positive semi-definite matrices).~Since $\Sigma$ is nondegenerate, \eqref{ineq:ellipticity} ensures that the probability measure $\mu_V$ is strongly log-concave.~Thus, by the Brascamp and Lieb inequality (see, e.g., \cite{BL_76,CCL13}),  for all $f \in \mathcal{C}_c^{\infty}(\bbr^d)$ with $\int_{\bbr^d} f(x) \mu_V(dx) = 0$, 
\begin{align*}
\int_{\bbr^d} \left| f(x)\right|^2 \mu_V(dx) \leq \int_{\bbr^d} \langle \nabla(f)(x) ; \operatorname{Hess}(V)(x)^{-1} \nabla(f)(x) \rangle \mu_{V}(dx). 
\end{align*}
The previous inequality readily implies, for all $f \in \mathcal{C}_c^{\infty}(\bbr^d, \bbr^d)$ with $\int_{\bbr^d} f(x) \mu_V(dx) = 0$, that 
\begin{align*}
\int_{\bbr^d} \|f(x)\|^2 \mu_V(dx) \leq \int_{\bbr^d} \langle \nabla(f)(x) ;  \operatorname{Hess}(V)(x)^{-1} \nabla(f)(x)  \rangle_{HS} \mu_V(dx),
\end{align*}
which gives, thanks to \eqref{ineq:ellipticity},
\begin{align*}
\int_{\bbr^d} \|f(x)\|^2 \mu_V(dx) \leq \frac{1}{\kappa} \int_{\bbr^d} \langle \nabla(f)(x) ;  \Sigma \nabla(f)(x)  \rangle_{HS} \mu_V(dx). 
\end{align*}
In particular, if $\kappa = 1$, then $\mu_V = \gamma_{\Sigma}$. Moreover, Theorem \ref{thm:Rate_TCL_W1} provides the following bound for $X_1 \sim \mu_V$: for all $n \geq 1$, 
\begin{align*}
W_1(\mu_n, \gamma_\Sigma) \leq \dfrac{\| \Sigma^{- \frac{1}{2}} \|_{op} \| \Sigma \|_{HS}}{\sqrt{n}} \sqrt{\frac{1}{\kappa}-1}. 
\end{align*}
(ii) Since \cite{Bob_AOP_99}, it is well-known that log-concave probability measures (i.e.,~probability measures with log-concave densities with respect to the Lebesgue measure) on $\bbr^d$ satisfy a Poincar\'e-type inequality. Let $d \geq 1$, let $\Sigma = I_d$ and let $\mu$ be a log-concave probability measure on $\bbr^d$ in isotropic position, i.e., such that 
\begin{align*}
\int_{\bbr^d} x\mu(dx) = 0 ,\quad \int_{\bbr^d} xx^T \mu(dx) = I_d.
\end{align*}
Then, for all $f \in \mathcal{C}_c^{\infty}(\bbr^d)$ such that $\int_{\bbr^d} f(x) \mu(dx) = 0$, 
\begin{align*}
\int_{\bbr^d} |f(x)|^2 \mu(dx) \leq C_p(\mu) \int_{\bbr^d} \langle \nabla(f)(x) ; \nabla(f)(x) \rangle \mu(dx),
\end{align*}
where $C_p(\mu)>0$ is the best constant for which the previous inequality holds.~According to the well known   
Kannan-Lov\'asz-Simonovits (KLS) conjecture (see, e.g., \cite{AB_2015}), the constant $C_p(\mu)$ should be uniformly upper bounded by 
some universal constant $C\geq 1$ (independent of the dimension) for all log-concave probability measures on $\bbr^d$ in isotropic position.~In Theorem \ref{thm:Rate_TCL_W1}, this conjecture would imply: for all $n \geq 1$,
\begin{align*}
W_1(\mu_n, \gamma) \leq \sqrt{\dfrac{d}{n}} \sqrt{C-1},
\end{align*}
if $X_1 \sim \mu$. To date, the best known bound on the constant $C_P(\mu)$ is provided by the very recent result in \cite{Chen_2021} which ensures a lower bound on the isoperimetric constant of an isotropic log-concave probability measure $\mu$ on $\bbr^d$:
\begin{align*}
I_P(\mu) \geq d^{-c' \left(\frac{\log \log d}{\log d}\right)^{\frac{1}{2}}},
\end{align*}
for some $c'>0$. According to Cheeger's inequality (see, e.g., \cite{Bob_AOP_99}),
\begin{align*}
C_P(\mu) \leq c_1 d^{c_2 \left(\frac{\log \log d}{\log d}\right)^{\frac{1}{2}}},
\end{align*}
for some positive numerical constants $c_1$ and $c_2$. 

In analogy with the KLS conjecture, and with regard to the general anisotropic case with a nondegenerate covariance matrix $\Sigma$, 
it seems natural to wonder if the functional $U_{\Sigma, \mu}$ is uniformly bounded over the class of centered log-concave probability measures on $\bbr^d$ with covariance structure $\Sigma$
and if the corresponding upper bound is dimension free.
\end{rem}

\section{Appendix}

\begin{lem}\label{lem:representation_W_1}
Let $d \geq 1$ and let $\mu, \nu$ be two probability measures on $\bbr^d$ with finite first moment.  Then, 
\begin{align}\label{eq:rep_W_1}
W_1(\mu, \nu) = \underset{h \in \mathcal{C}^\infty(\bbr^d), \, \|h\|_{\operatorname{Lip} \leq 1}}{\sup} \left| \int_{\bbr^d} h(x) \mu(dx) - \int_{\bbr^d} h(x) \nu(dx) \right|. 
\end{align}
\end{lem}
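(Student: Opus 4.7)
The plan is to establish the identity by showing both inequalities. The non-trivial direction is to approximate an arbitrary $1$-Lipschitz function by smooth $1$-Lipschitz functions, and the main tool will be convolution with a mollifier together with a dominated-convergence argument relying on the finite first moment assumption.

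First, I would recall the classical Kantorovich-Rubinstein duality representation
\begin{align*}
W_1(\mu, \nu) = \sup_{h:\, \|h\|_{\operatorname{Lip}} \leq 1} \left| \int_{\bbr^d} h(x) \mu(dx) - \int_{\bbr^d} h(x) \nu(dx) \right|,
\end{align*}
valid for $\mu, \nu$ with finite first moment (note that each side of the difference is finite since any $1$-Lipschitz $h$ satisfies $|h(x)| \leq |h(0)| + \|x\|$). The inequality $(\leq)$ in \eqref{eq:rep_W_1} is not needed in the form stated; rather one side of \eqref{eq:rep_W_1} is trivially bounded by the other: any smooth $1$-Lipschitz function is in particular $1$-Lipschitz, so the supremum on the right-hand side of \eqref{eq:rep_W_1} is bounded above by $W_1(\mu,\nu)$.

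For the reverse inequality, let $h$ be an arbitrary $1$-Lipschitz function on $\bbr^d$. Fix a standard mollifier $\rho \in \mathcal{C}_c^\infty(\bbr^d)$ with $\rho \geq 0$, $\operatorname{Supp}(\rho) \subset \mathcal{B}(0,1)$ and $\int_{\bbr^d} \rho(x)dx = 1$, and set $\rho_\varepsilon(x) = \varepsilon^{-d}\rho(x/\varepsilon)$ for $\varepsilon > 0$. Define $h_\varepsilon = h * \rho_\varepsilon$. Then $h_\varepsilon \in \mathcal{C}^\infty(\bbr^d)$ by standard convolution results, and for all $x,y \in \bbr^d$,
\begin{align*}
|h_\varepsilon(x) - h_\varepsilon(y)| \leq \int_{\bbr^d} |h(x-z) - h(y-z)| \rho_\varepsilon(z) dz \leq \|x - y\|,
\end{align*}
so $\|h_\varepsilon\|_{\operatorname{Lip}} \leq 1$. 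Moreover, $h_\varepsilon(x) \to h(x)$ pointwise (in fact locally uniformly) as $\varepsilon \to 0^+$, by continuity of $h$.

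The key step is then to pass to the limit in the integrals. Using the uniform pointwise bound
\begin{align*}
|h_\varepsilon(x)| \leq |h(0)| + \|x\| + \varepsilon \leq |h(0)| + \|x\| + 1,\quad \varepsilon \in (0,1], \; x \in \bbr^d,
\end{align*}
and the fact that the dominating function $x \mapsto |h(0)| + \|x\| + 1$ is $\mu$- and $\nu$-integrable by the finite first moment assumption, the dominated convergence theorem yields
\begin{align*}
\int_{\bbr^d} h_\varepsilon(x)\mu(dx) \underset{\varepsilon \to 0^+}{\longrightarrow} \int_{\bbr^d} h(x) \mu(dx),
\end{align*}
and similarly for $\nu$. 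Hence
\begin{align*}
\left| \int_{\bbr^d} h \, d\mu - \int_{\bbr^d} h \, d\nu \right| = \lim_{\varepsilon \to 0^+} \left| \int_{\bbr^d} h_\varepsilon \, d\mu - \int_{\bbr^d} h_\varepsilon \, d\nu \right| \leq \sup_{h \in \mathcal{C}^\infty, \|h\|_{\operatorname{Lip}}\leq 1}\left|\int h \, d\mu - \int h \, d\nu\right|.
\end{align*}
Taking the supremum over all $1$-Lipschitz $h$ on the left-hand side yields the reverse inequality and completes the proof. The only subtle point, which I would highlight as the main obstacle, is the uniform integrable domination: one must ensure that the mollification does not destroy the linear-growth bound in a way that interacts badly with the first moment condition. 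The choice of a compactly supported mollifier provides the clean uniform bound $|h_\varepsilon(x)| \leq |h(0)| + \|x\| + \varepsilon$ that makes dominated convergence immediate.
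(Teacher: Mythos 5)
Your proposal is correct and follows essentially the same route as the paper: both directions are obtained by noting that smooth $1$-Lipschitz functions form a subclass of all $1$-Lipschitz functions, and then approximating an arbitrary $1$-Lipschitz $h$ by convolution with a smooth kernel. The only cosmetic difference is that the paper convolves with a Gaussian and exploits the uniform bound $|h - h_\varepsilon| \leq C_d\sqrt{\varepsilon}$ (so no dominated convergence is needed), whereas you use a compactly supported mollifier plus dominated convergence — and in fact your mollifier also gives the uniform bound $|h - h_\varepsilon| \leq \varepsilon$, so your limit step could be simplified the same way.
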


\begin{proof}
Recall that, by duality, 
\begin{align}\label{eq:KT_Lipschitz}
	W_1(\mu ,\nu) = \underset{h \in \operatorname{Lip}, \|h\|_{\operatorname{Lip}}\leq 1}{\sup} \left| \int_{\bbr^d} h(x) \mu(dx) - \int_{\bbr^d} h(x) \nu(dx) \right|,
\end{align}
where $\operatorname{Lip}$ is the space of Lipschitz functions on $\bbr^d$ with the Lipschitz semi-norm 
\begin{align*}
	\|h\|_{\operatorname{Lip}}  = \underset{x,y \in \bbr^d, x \ne y}{ \sup } \dfrac{|h(x) -h(y)|}{\|x - y\|}.  
\end{align*}	
So, at first, it is clear that 
\begin{align*}
W_1(\mu , \nu) \geq \underset{ h \in \mathcal{C}^{\infty}(\bbr^d), \, \| h \|_{\operatorname{Lip}}\leq 1}{\sup} \left| \int_{\bbr^d} h(x) \mu(dx) - \int_{\bbr^d} h(x) \nu(dx) \right|. 
\end{align*}
Next, let $h \in \mathcal{C}^1(\bbr^d)$ be such that $\| h \|_{\operatorname{Lip}}\leq 1$. Let $\varepsilon >0$ and let $p_{\varepsilon}$ be the centered multivariate Gaussian density with covariance matrix $\varepsilon I_d$, i.e., for all $y \in \bbr^d$, 
\begin{align*}
p_{\varepsilon}(y) = \frac{1}{(2 \pi \varepsilon)^{\frac{d}{2}}} \exp\left(  - \dfrac{\|y\|^2}{2 \varepsilon}\right).
\end{align*}
Moreover, let 
\begin{align*}
h_{\varepsilon}(x)  = \int_{\bbr^d} h(x-y) p_{\varepsilon}(y) dy, \quad x \in \bbr^d.
\end{align*}
It is clear that $h_\varepsilon\in \mathcal{C}^{\infty}(\bbr^d)$, and that
\begin{align*}
\| h_\varepsilon \|_{\operatorname{Lip}} \leq 1, \quad \varepsilon>0,
\end{align*}
since $\| h \|_{\operatorname{Lip}} \leq 1$.  Moreover, for all $\varepsilon>0$ and all $x \in \bbr^d$, 
\begin{align*}
\left| h(x) - h_\varepsilon(x) \right| & \leq \| h \|_{\operatorname{Lip}} \int_{\bbr^d} \| z \| p_{\varepsilon}(z)dz , \\
& \leq C_d\sqrt{\varepsilon},
\end{align*}
for some constant $C_d>0$ depending only on $d \geq 1$.  Thus, 
\begin{align*}
W_1(\mu,\nu) \leq \underset{h \in \mathcal{C}^{\infty}(\bbr^d), \, \|h\|_{\operatorname{Lip}}\leq 1}{\sup} \left| \int_{\bbr^d} h(x) \mu(dx) - \int_{\bbr^d} h(x) \nu(dx) \right| + 2 C_d \sqrt{\varepsilon}. 
\end{align*}
Letting $\varepsilon \rightarrow 0^+$ concludes the proof of this reduction principle.  
\end{proof}

\begin{lem}\label{lem:representation_W_1_2}
Let $d \geq 1$ and let $\mu, \nu$ be two probability measures on $\bbr^d$ with finite first moment.  Then, 
\begin{align}\label{eq:rep_W_1_2}
W_1(\mu, \nu) = \underset{h \in \mathcal{C}_c^\infty(\bbr^d), \, \|h\|_{\operatorname{Lip} \leq 1}}{\sup} \left| \int_{\bbr^d} h(x) \mu(dx) - \int_{\bbr^d} h(x) \nu(dx) \right|. 
\end{align}
\end{lem}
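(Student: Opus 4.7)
The plan is to use Lemma \ref{lem:representation_W_1}, which already reduces $W_1(\mu,\nu)$ to a supremum over $\mathcal{C}^\infty(\bbr^d)$ functions of unit Lipschitz seminorm. The inequality $\sup_{h\in \mathcal{C}_c^\infty,\, \|h\|_{\operatorname{Lip}}\leq 1}|\int h\, d\mu - \int h\, d\nu| \leq W_1(\mu,\nu)$ is immediate from the inclusion $\mathcal{C}_c^\infty(\bbr^d)\subset \mathcal{C}^\infty(\bbr^d)$ together with \eqref{eq:rep_W_1}. For the reverse inequality, I would fix $h\in \mathcal{C}^\infty(\bbr^d)$ with $\|h\|_{\operatorname{Lip}}\leq 1$ and, after replacing $h$ by $h-h(0)$ (which leaves $\int h\,d\mu-\int h\,d\nu$ unchanged), assume that $h(0)=0$, so that $|h(x)|\leq \|x\|$, providing a $(\mu+\nu)$-integrable bound.

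The construction of compactly supported approximants naturally splits into two steps. First, to make $h$ bounded, I would pick a smooth odd function $\rho:\bbr\to\bbr$ with $\rho'\in[0,1]$, $\rho(t)=t$ for $|t|\leq 1$, and $|\rho|\leq M$ for some $M>0$, and set $\tilde h_R(x) = R\,\rho(h(x)/R)$. This is smooth, $1$-Lipschitz (by the chain rule), and satisfies $|\tilde h_R|\leq MR$ as well as $|\tilde h_R(x)|\leq |h(x)|$. Second, to make the support compact, I would pick $\chi\in\mathcal{C}_c^\infty(\bbr^d)$ with $\chi\equiv 1$ on $B(0,1)$, $\chi\equiv 0$ off $B(0,2)$, and $\|\nabla\chi\|_\infty\leq C_0$, and form $g_R(x)=\tilde h_R(x)\,\chi(x/R^2)$. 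A direct computation gives
\begin{align*}
\|\nabla g_R(x)\|\leq |\chi(x/R^2)| + \frac{|\tilde h_R(x)|}{R^2}\|\nabla\chi\|_\infty \leq 1+ \frac{MC_0}{R},
\end{align*}
so that $\hat g_R := g_R/(1+MC_0/R)\in \mathcal{C}_c^\infty(\bbr^d)$ with $\|\hat g_R\|_{\operatorname{Lip}}\leq 1$.

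The last step is to show that $\int \hat g_R\,d\eta \to \int h\,d\eta$ as $R\to+\infty$, for $\eta\in\{\mu,\nu\}$. Decomposing
\begin{align*}
h-\hat g_R = (h-\tilde h_R) + \tilde h_R\bigl(1-\chi(\cdot/R^2)\bigr) + g_R\,\frac{MC_0/R}{1+MC_0/R},
\end{align*}
each summand is pointwise bounded respectively by $(M+1)\|x\|\,\bbone_{\|x\|>R}$, $\|x\|\,\bbone_{\|x\|>R^2}$, and $(MC_0/R)\|x\|$, all of which are dominated by an integrable function; dominated convergence then yields $\int|h-\hat g_R|\,d\eta\to 0$. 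Passing to the limit in $|\int h\,d\mu-\int h\,d\nu|=|\int \hat g_R\,d\mu-\int \hat g_R\,d\nu|+o(1)$ and then taking the supremum over $h\in\mathcal{C}^\infty(\bbr^d)$ with $\|h\|_{\operatorname{Lip}}\leq 1$ concludes the proof via \eqref{eq:rep_W_1}.

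The main subtlety is ensuring that the Lipschitz constant of the approximant does not exceed $1$: multiplying $h$ directly by a cutoff forces the product rule to produce a constant strictly larger than $1$ which cannot be made to tend to $1$ without first controlling the size of $h$. This is precisely why the preliminary range-truncation by $\rho$ is needed, and why the spatial cutoff scale (here $R^2$) must be taken large compared with the range-cutoff scale $R$, so that the extraneous constant $1+MC_0/R$ converges to $1$ while the truncation errors remain controlled by the finite first moments of $\mu$ and $\nu$.
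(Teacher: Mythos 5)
Your proof is correct and follows essentially the same strategy as the paper's: a range truncation to make $h$ bounded, a spatial cutoff whose damage to the Lipschitz constant is of order $\|h_{\mathrm{trunc}}\|_\infty/(\text{cutoff scale})$ and is normalized away, and error terms controlled by the finite first moments via dominated convergence. The only (cosmetic) differences are that you use a smooth saturation $\rho$ so that Lemma \ref{lem:representation_W_1} need only be invoked once at the outset, and you couple the two truncation scales ($R$ and $R^2$) into a single limit, whereas the paper truncates with the non-smooth $(-R)\vee(\cdot\wedge R)$, mollifies, and then performs a separate cutoff limit for each fixed bounded $h$.
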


\begin{proof}
Let $h$ be a Lipschitz function on $\bbr^d$ such that 
\begin{align*}
\|h\|_{\operatorname{Lip}}  = \underset{x,y \in \bbr^d, x \ne y}{ \sup } \dfrac{|h(x) -h(y)|}{\|x - y\|}\leq 1.  
\end{align*}
Recall that by Rademacher's theorem such a function $h$ is differentiable almost everywhere on $\bbr^d$.  Actually, as shown next, it is possible to restrict the supremum appearing in \eqref{eq:KT_Lipschitz} to bounded Lipschitz functions $h$ defined on $\bbr^d$ and such that $\|h\|_{\operatorname{Lip}} \leq 1$. Indeed, let $R>0$ and let $G_R$ be the function defined, for all $y \in \bbr$, by
\begin{align*}
G_R(y) = (-R) \vee \left(y \wedge R\right).
\end{align*}
$G_R$ is clearly bounded on $\bbr$ by $R$ and, for all $y \in \bbr$ fixed, 
\begin{align*}
\underset{R \rightarrow + \infty}{\lim} G_R(y) = y.
\end{align*}
Moreover, $\| G_R \|_{\operatorname{Lip}} \leq 1$ by construction. Thus, for all $R>0$, let $h_R$ be defined, for all $x \in \bbr^d$, by $h_R(x) = G_R(h(x))$.  The function $h_R$ is bounded on $\bbr^d$ and $1$-Lipschitz by composition, and moreover, $\underset{R \rightarrow + \infty}{\lim} G_R(h(x)) = h(x)$, for all $x \in \bbr^d$. Then, 
\begin{align*}
\left| \int_{\bbr^d} h(x) \mu(dx) - \int_{\bbr^d} h(x) \nu(dx) \right| & \leq \left|\int_{\bbr^d} h_R(x) \mu(dx) - \int_{\bbr^d} h_R(x) \nu(dx) \right| \\
& \quad\quad + \left| \int_{\bbr^d} (h_R(x) - h(x)) \mu(dx) \right| + \left| \int_{\bbr^d} (h_R(x)  - h(x)) \nu(dx) \right| .  
\end{align*}
Thus, 
\begin{align*}
\left| \int_{\bbr^d} h(x) \mu(dx) - \int_{\bbr^d} h(x) \nu(dx) \right| & \leq \underset{h \in \operatorname{Lip}_b,\, \|h\|_{\operatorname{Lip}}\leq 1}{\sup} \left|\int_{\bbr^d} h(x) \mu(dx) - \int_{\bbr^d} h(x) \nu(dx) \right| \\
& \quad\quad + \left| \int_{\bbr^d} (h_R(x) - h(x)) \mu(dx) \right| + \left| \int_{\bbr^d} (h_R(x)  - h(x)) \nu(dx) \right|, 
\end{align*}
where $\operatorname{Lip}_b$ is the set of bounded Lipschitz functions on $\bbr^d$.  Next, without loss of generality, let us assume that $h(0) = 0$. Now, since, for all $x \in \bbr^d$ and all $R>0$, 
\begin{align*}  
\left| h_R(x) - h(x) \right| \leq 2 |h(x)| \leq 2 \|x\|,
\end{align*}
the dominated convergence theorem ensures that
\begin{align*}
\underset{R \rightarrow +\infty}{\lim} \int_{\bbr^d} (h_R(x) - h(x)) \mu(dx) = 0,
\end{align*}
and similarly for $\nu$. Thus, 
\begin{align*}
\left| \int_{\bbr^d} h(x) \mu(dx) - \int_{\bbr^d} h(x) \nu(dx) \right| & \leq \underset{h \in \operatorname{Lip}_b,\, \|h\|_{\operatorname{Lip}}\leq 1}{\sup} \left|\int_{\bbr^d} h(x) \mu(dx) - \int_{\bbr^d} h(x) \nu(dx) \right| .
\end{align*}
Next, applying the regularization procedure of Lemma \ref{lem:representation_W_1}, one has 
\begin{align*}
W_1(\mu, \nu) = \underset{h \in \mathcal{C}_b^\infty(\bbr^d), \, \|h\|_{\operatorname{Lip} \leq 1}}{\sup} \left| \int_{\bbr^d} h(x) \mu(dx) - \int_{\bbr^d} h(x) \nu(dx) \right|,
\end{align*}
where $\mathcal{C}_b^{\infty}(\bbr^d)$ is the set of infinitely differentiable  bounded functions on $\bbr^d$.  Finally, let $h \in \mathcal{C}_b^{\infty}(\bbr^d)$ be such that $\| h \|_{\operatorname{Lip}}\leq 1$ and let $\psi$ be a smooth compactly supported function with values in $[0,1]$ and with support included in the Euclidean ball centered at the origin and of  radius $2$ such that, for all $x \in \bbr^d$ with $\|x\| \leq 1$,  $\psi(x) = 1$. Then, let 
\begin{align*}
\tilde{h}_{R}(x) = \psi\left( \dfrac{x}{R} \right) h(x), \quad R \geq 1, \quad x \in \bbr^d. 
\end{align*}
Clearly, $\tilde{h}_R\in \mathcal{C}_c^{\infty}(\bbr^d)$, and moreover, for all $x \in \bbr^d$ and all $R \geq 1$, 
\begin{align*}
\nabla(\tilde{h}_R)(x) = \frac{1}{R} \nabla(\psi)(\frac{x}{R})h(x) + \psi\left( \frac{x}{R}\right) \nabla(h)(x). 
\end{align*} 
Then, for all $x \in \bbr^d$, 
\begin{align*}
\| \nabla(\tilde{h}_R)(x) \| \leq 1 + \frac{1}{R} \|h\|_{\infty} \|\nabla(\psi)\|_{\infty}. 
\end{align*}
Thus, 
\begin{align*}
\left| \int_{\bbr^d} h(x) \mu(dx) - \int_{\bbr^d} h(x) \nu(dx)  \right| & \leq \left| \int_{\bbr^d} \tilde{h}_R(x) \mu(dx) - \int_{\bbr^d} \tilde{h}_R(x) \nu(dx)  \right| + \left| \int_{\bbr^d} (h(x) - \tilde{h}_R(x)) \mu(dx) \right| \\
& \quad\quad + \left| \int_{\bbr^d} (h(x) - \tilde{h}_R(x)) \nu(dx) \right|, \\
& \leq \left(1 + \frac{1}{R} \|h\|_{\infty} \|\nabla(\psi)\|_{\infty}\right) \underset{ h \in \mathcal{C}^{\infty}_c(\bbr^d), \, \| h \|_{\operatorname{Lip}}\leq 1}{\sup} \left| \int_{\bbr^d} h(x) \mu(dx) - \int_{\bbr^d} h(x) \nu(dx) \right| \\
& \quad\quad + \left| \int_{\bbr^d} h(x) \left(1 - \psi\left(\frac{x}{R}\right)\right) \mu(dx) \right| + \left| \int_{\bbr^d} h(x) \left(1 - \psi\left(\frac{x}{R}\right)\right) \nu(dx) \right|.
\end{align*}
But, 
\begin{align*}
\left| \int_{\bbr^d} h(x)\left(1 - \psi\left(\frac{x}{R}\right)\right) \mu(dx) \right| \leq \|h\|_{\infty} \int_{\|x\| \geq R}  \mu(dx), 
\end{align*}
and similarly for $\nu$.  Thus,  letting $R \rightarrow +\infty$, 
\begin{align*}
\left| \int_{\bbr^d} h(x) \mu(dx) - \int_{\bbr^d} h(x) \nu(dx)  \right| & \leq \underset{ h \in \mathcal{C}^{\infty}_c(\bbr^d), \, \| h \|_{\operatorname{Lip}}\leq 1}{\sup} \left| \int_{\bbr^d} h(x) \mu(dx) - \int_{\bbr^d} h(x) \nu(dx) \right|,
\end{align*}
which concludes the proof of the lemma.
\end{proof}

\end{document}